\theoremstyle{plain}
\newtheorem{theorem}{Theorem} 
\newtheorem{proposition}{Proposition} 
\newtheorem{corollary}{Corollary}
\newtheorem{lemma}{Lemma}
\newtheorem{example}{Example}
\theoremstyle{remark}
\newtheorem{remark}{Remark}
\theoremstyle{definition}
\newtheorem{definition}{Definition}
\newcolumntype{C}[1]{>{\centering\arraybackslash}m{#1}}
\title{On the Ratio of Shannon Numbers of Graphs}
\author{Sharareh Alipour, Amin Gohari,
 Mehrshad Taziki
}
\begin{document}

\maketitle

\begin{abstract}
Let $\Gamma$ be a function that maps two arbitrary graphs $G$ and $H$ to a non-negative real number  such that
$$\alpha(G^{\boxtimes n})\leq \alpha(H^{\boxtimes n})\Gamma(G,H)^n$$
where $n$ is any natural number and $G^{\boxtimes n}$ is the strong product of $G$ with itself 
$n$ times. We establish the equivalence of two different approaches for finding such a function $\Gamma$. The common solution obtained through either approach is termed ``the relative fractional independence number of a graph $G$ with respect to another graph $H$". We show this function by $\alpha^*(G|H)$ and discuss some of its properties. In particular, we show that
$\alpha^*(G|H)\geq \frac{X(G)}{X(H)} \geq \frac{1}{\alpha^*(H|G)},$
where $X(G)$ can be the independence number, the  Shannon capacity, the fractional independence number, the Lov\'{a}sz number, or the Schrijver's or Szegedy's variants of the Lov\'{a}sz number of a graph $G$. This inequality is the first explicit non-trivial upper bound on the ratio of the invariants of two arbitrary graphs, as mentioned earlier, which can also be used to obtain upper or lower bounds for these invariants. As explicit applications, we present new upper bounds for the ratio of the  Shannon capacity of two Cayley graphs and compute new lower bounds on the Shannon capacity of certain Johnson graphs (yielding the exact value of their Haemers number). Moreover, we show that $\alpha^*(G|H)$ can be used to present a stronger version of the well-known No-Homomorphism Lemma. 
\footnote{A shortened conference version of this work (containing some of the results) was presented at the 2024 IEEE Information Theory Workshop.}
\end{abstract}

Keywords: Independence number, Shannon capacity, Lov\'az number, Haemers number, Homomorphism



\section{Introduction}
Let $G$ be a finite, undirected graph without a loop or multiple edges. The set $\mathcal V(G)$ denotes the vertex set of $G$, and $\mathcal E(G)$ represents the edge set of $G$. An independent set of $G$ is a subset $\mathcal S$ of $\mathcal V(G)$ such that no two vertices in $\mathcal S$ are adjacent in $G$, i.e., $uv\notin \mathcal E(G)$ for all $u,v\in \mathcal S$. The \emph{independence number} (or the \emph{packing number}) of a graph $G$, denoted by $\alpha(G)$, is the size of the biggest independent set of $G$. 
 Computing $\alpha(G)$ is an NP-hard problem \cite{karp}. 
The strong product of two graphs, $G \boxtimes H$, is a graph whose vertex set is the Cartesian product of the vertex sets of $G$ and $H$. Distinct vertices $(u,u')$ and $(v,v')$ are adjacent in $G \boxtimes H$ iff either $u=v$ and $u'v'\in \mathcal E(H)$, or $uv\in \mathcal E(G)$ and $u'=v'$, or $uv\in \mathcal E(G)$ and $u'v'\in \mathcal E(H)$. 

The Shannon capacity (or the Shannon number) of a graph $G$ is defined as
\begin{align}
 \mathscr{C}(G)
 = \lim_{n \rightarrow \infty}  \alpha(G^{\boxtimes n})^{\frac{1}{n}},\label{eqnDefC}
\end{align}
where $G^{\boxtimes n}$ is the strong graph product of $G$ with itself $n$ times. It is known that the above limit exists and furthermore $\mathscr{C}(G)\geq \alpha(G^{\boxtimes n})^{\frac{1}{n}}$ for every $n\ge 1$. 
This graph invariant was introduced by Shannon in 1956 and
gives a measure of the optimal zero-error performance of an associated memory-less communication channel \cite{shannonmain}.

In general, computing the exact value of $\mathscr{C}(G)$ is a challenging problem, even for simple graphs like a cycle of length $7$, $C_7$, $\mathscr{C}(C_7)$ is not known.
Nonetheless, various upper bounds on the Shannon capacity are obtained in the literature. This can be achieved by identifying a function $\Lambda$ of a graph $G$ such that
\begin{align}
    \alpha(G^{\boxtimes n})\leq \Lambda(G)^n\label{Lambdaeq}
\end{align}
for any natural number $n$. Examples of such upper bounds are the fractional independence number,  Lov\'{a}sz number of a graph \cite{Lovasz} or the Haemers number \cite{haemers}.
The Lov\'{a}sz number can be formulated as a semidefinite program and numerically approximated by the ellipsoid method in polynomial time in the number of vertices of $G$ \cite{gro}.
The Haemers number of a graph $G$, $\mathcal{H}(G)$, is an upper bound on $\mathscr{C}(G)$ which considers the rank of particular matrices associated with the graph $G$ \cite{haemers}.
An upper bound on the Shannon capacity of a graph via a linear
programming variation is given in \cite{hu}.
In \cite{bukh}, a fractional version of Haemers
bound is presented, and it is shown that this fractional version outperforms the Haemers bound. It is generally challenging to compute the exact values of Haemers number and the fractional Haemers number of a given graph $G$. However, these values are computed exactly \cite{haemers, riddle} for some special graphs, such as Johnson graphs with certain parameters.
Please see \cite{alon2002graph,jurkiewicz2014survey,korner1998zero,sason2023observations}  for more related results and survey papers.

In this paper, we are interested in a variant of the property in \eqref{Lambdaeq} for \emph{two} graphs. In particular, we seek a function  $\Gamma$ of two arbitrary graphs $G$ and $H$ such that
\begin{align}
    \alpha(G^{\boxtimes n})\leq \alpha(H^{\boxtimes n})\Gamma(G,H)^n \label{eqnv31d}
\end{align}
for any natural number $n$. 
Observe that \eqref{eqnv31d} implies that
\begin{align}
\Gamma(G,H)\geq \sup_n\left(\frac{\alpha(G^{\boxtimes n})}{\alpha(H^{\boxtimes n})}\right)^{\frac1n}\geq \lim_{n\rightarrow\infty}\left(\frac{\alpha(G^{\boxtimes n})}{\alpha(H^{\boxtimes n})}\right)^{\frac1n}=\frac{\mathscr{C}(G)}{\mathscr{C}(H)}\label{eqnv31dNN}
\end{align}
To the best of our knowledge, the problem of finding functions $\Gamma$ satisfying \eqref{eqnv31d} has not been studied before for general graphs. We point out that the No-Homomorphism Lemma partially addresses this question for a special class of graphs $G$ and $H$. This lemma states that if there is a homomorphism from $H$ to $G$, and $G$ is vertex-transitive, then
$$\frac{\alpha(G)}{\alpha(H)}\leq \frac{|\mathcal V(G)|}{|\mathcal V(H)|}.$$
It is clear that if there is a homomorphism from $H$ to $G$, there will also be a homomorphism from $H^{\boxtimes n}$ to $G^{\boxtimes n}$. Therefore,
$$\frac{\alpha(G^{\boxtimes n})}{\alpha(H^{\boxtimes n})}\leq \left(\frac{|\mathcal V(G)|}{|\mathcal V(H)|}\right)^n, \qquad\qquad\forall n\in\mathbb{N}.$$
Consequently, 
\begin{align}
\Gamma(G,H)=\frac{|\mathcal V(G)|}{|\mathcal V(H)|}\label{nohomeqn3}
\end{align}
will satisfy \eqref{eqnv31d} if there is a homomorphism from $H$ to $G$, and $G$ is vertex-transitive. However, we are interested in functions $\Gamma(G,H)$ which satisfy \eqref{eqnv31d} for \emph{all} graph pairs $G$ and $H$.

We discuss two different approaches for constructing such a function $\Gamma$.
The first approach yields a function
satisfying \eqref{eqnv31d} as the solution of an optimization problem involving a minimum, while the second approach yields a function satisfying \eqref{eqnv31d} in terms of a maximum.  Rather surprisingly, we show that the two approaches are equivalent and correspond to the dual forms of a linear programming problem. For reasons that will become clear later, the common solution obtained through either approach is named ``the relative fractional independence number of a graph $G$ with respect to another graph $H$", and is denoted by 
$$\alpha^*(G|H).$$
 For the special case of $G$ being a vertex-transitive graph, $\alpha^*(G|H)$ has a simple characterization: 
\begin{align*}\alpha^*(G|H)=\frac{|\mathcal V(G)|}{\alpha(G^c\boxtimes H)}.\end{align*}
Moreover, if in addition to $G$ being a vertex-transitive, there is also a homomorphism  from $H$ to $G$, then we show that 
\begin{align*}\alpha^*(G|H)=\frac{|\mathcal V(G)|}{\alpha(G^c\boxtimes H)}\leq \frac{|\mathcal V(G)|}{|\mathcal V(H)|},\end{align*}
so $\alpha^*(G|H)$ is less than or equal to \eqref{nohomeqn3} (the value proposed by the No-Homomorphism Lemma).

If both $G$ and $H$ are vertex-transitive graphs, we can also write
\begin{align}\alpha^*(G|H)=\frac{\chi_f(G^c\boxtimes H)}{ |\mathcal{V}(H)|},\end{align}
where $\chi_f(\cdot)$ is the fractional coloring number. We also obtain the explicit value of $\alpha^*(G|H)$ for some structured graphs (see Table \ref{table1}).

Our characterization of $\alpha^*(G|H)$ for arbitrary graphs $G$ and $H$ leads to the first explicit non-trivial upper bound on the ratio of the Shannon capacity of two arbitrary graphs. {In particular, for a vertex-transitive graph $G$ and any arbitrary graph $H$ we obtain
\begin{align}\frac{\mathscr{C}(G)}{\mathscr{C}(H)}\leq \frac{|\mathcal V(G)|}{\alpha(G^c\boxtimes H)}.\label{eqnCGR}\end{align}

Next, we discuss the relation of $\alpha^*(G|H)$ with other invariants, such as the Lov\'{a}sz number of a graph.
For a given graph $G$, using the fractional independence number of $G$ with respect to another graph $H$, we can give upper or lower bounds on the value of these invariants or compute some previously unknown invariants. 

One application of our results is the extension of the No-Homomorphism Lemma. Assume that there is a homomorphism from $H$ to $G$, and $G$ is vertex-transitive. Then, the No-Homomorphism Lemma yields 
$$\frac{\alpha(G^{\boxtimes n})}{\alpha(H^{\boxtimes n})}\leq \left(\frac{|\mathcal V(G)|}{|\mathcal V(H)|}\right)^n, \qquad\qquad\forall n\in\mathbb{N}.$$
The above equation also immediately implies that
$$\frac{\mathscr{C}(G)}{\mathscr{C}(H)}\leq \frac{|\mathcal V(G)|}{|\mathcal V(H)|}.$$
However, what if we replace $\mathscr{C}(G)$ and $\mathscr{C}(H)$ by the Lov\'{a}sz numbers of $G$ and $H$ (or say, by the fractional independence numbers of $G$ and $H$)? Utilizing the relative fractional independence number we can also show that
$$\frac{X(G)}{X(H)}\leq \alpha^*(G|H) \leq \frac{|\mathcal V(G)|}{|\mathcal V(H)|}$$
where $X(\cdot)$ can be either the independence number, the Shannon capacity, the fractional independence number, the Lov\'{a}sz number, Schrijver's
or Szegedy's variants of the Lov\'{a}sz number of the graph $G$. 

Finally, there are also other results included in the paper, sometimes serving as examples for our discussions. In particular, we find new lower bounds on the Shannon capacity of certain Johnson graphs, enabling us to determine the exact value of their Haemers number for the first time. We also establish that
$$\lim_{n\rightarrow \infty} 4n+2-\mathscr{C}(J(4n+2,3))=0$$
for the Johnson graph $J(4n+2,3)$.

\section{Notation and Preliminaries }

Throughout this paper, we use capital letters such as $G, H$, and $W$ to denote graphs (finite, undirected graphs with no loops or multiple edges). 
 The complement of a graph $G$, denoted by $G^c$, is a graph with the same vertices as in $G$, such that two distinct vertices of $G^c$ are adjacent iff they are not adjacent in $G$. We use $C_k$ to denote a cycle graph of length $k$. We show sets in calligraphic letters. We use the lowercase letters to either denote vertices of a graph (as in $u,v,v_1, v_2, x, y$) or real numbers (as in $w_1, w_2, ...$). The bold letter $\mathbf{w}$ denotes a vector of real numbers. 
For graphs $G_1$ and $G_2$, $G=G_1+G_2$ is the disjoint union of $G_1$ and $G_2$, i.e., a graph whose vertex set is the union of vertex sets of $G_1$ and $G_2$ and whose edge set consists of edges which are in $G_1$ or in $G_2$.

We say that $G$ is an induced subgraph of $H$ if $G$ is formed by selecting a subset of the vertices from $H$ and then including \emph{all} the edges from the parent graph $H$ that connect those chosen vertices. Given a graph $G$, let $\mathcal I(G)$ be the set of all independent sets of $G$. We also consider the empty set as a member of $\mathcal I(G)$. Finally, a homomorphism $\mathsf{g}:H\rightarrow G$ from a graph $H$ to a graph $G$ is a map $\mathsf{g}:\mathcal V(H)\rightarrow \mathcal V(G)$ such that $uv\in \mathcal E(H)$ implies $\mathsf{g}(u)\mathsf{g}(v)\in \mathcal E(G)$. 

A graph is called a vertex-transitive (or node symmetric) graph if and only if for its any pair of nodes $v$ and $u$, there exists an
automorphism\footnote{An automorphism of a graph is a graph isomorphism with itself, i.e., a mapping from the vertices of the given graph $G$ back to vertices of $G$ such that the resulting graph is isomorphic with $G$.} of the graph that maps $v$ to $u$ \cite{chiang}. If $G$ and $H$ are vertex-transitive graphs, then so are $G^c$ and $G\boxtimes H$ (see \cite[Lemma 1]{SONNEMANN1974133}).

We also need the definition of a Cayley graph:
\begin{definition}
    Let $G$ be an abelian group and $S$ be a subset of elements of $G$ such that $S=-S$ and $0\not\in S$. The Cayley graph $Cay(G,S)$ is a graph with the vertex set $G$ such that $a,b\in G$ are connected by an edge iff $a-b \in S$. 
    Cayley graphs are basic examples of vertex-transitive graphs.
\end{definition}





\begin{definition}
We say that $\mathcal S,\mathcal T\in \mathcal I(G)$ are disconnected in $G$ if $\mathcal S\cap \mathcal T=\emptyset$ and 
$\mathcal S\cup \mathcal T\in \mathcal I(G)$.
\label{def2sep}
\end{definition}
\begin{definition}\label{defG*}
    Following \cite{SONNEMANN1974133}, for a given graph $G$, the independence graph corresponding to $G$, denoted by $G^*$, is a simple graph whose vertices are elements of $\mathcal I(G)$ and two vertices are nonadjacent in $G^*$ iff their corresponding independent sets are disconnected. Equivalently, there is an edge between $\mathcal S,\mathcal T\in \mathcal I(G)$ where $\mathcal S\neq \mathcal T$
    if and only if $\mathcal S\cap \mathcal T\neq \emptyset$ or $\mathcal S\cup\mathcal T\notin\mathcal I(G)$.\footnote{There is also a different definition of independence graph in the literature which we do not use here \cite{brevsar2003independence}. } Note that $\emptyset$ is a vertex in $\mathcal I(G)$ and is not connected to any other vertex in $\mathcal I(G)$.
\end{definition}

\begin{table}
    \centering
    \renewcommand{\arraystretch}{1.5}
    \begin{tabular}{ |C{5cm}|C{3.8cm}|C{1.7cm}|C{4cm}| }
      \hline
      $\mathbf{G}$ & $\mathbf{H}$ & $\mathbf{\alpha^*(G|H)}$ & \textbf{Remarks} \\ [0.5ex]
      \hline\hline
      & & & Theorem \ref{cayex} \\
      $Cay(\mathbb{Z}_n, \pm 1,\dots \pm k)$ & $Cay(\mathbb{Z}_m, \pm 1,\dots, \pm k)$ & $n/m$ & $1\le 2k<n<m$ \\
      & & & $m=\ell n+s(k+1)$ \\ & & & for integers $\ell,s\ge 0$ \\
      \hline
      $Cay(\mathbb{Z}_n, \pm 1,\cdots,\pm (2k+1))$ & $Cay(\mathbb{Z}_n, \pm 1,\cdots,\pm k)$ & $1/2$ & Example \ref{example1d}, $n\geq 4k+3$ \\
      \hline
      $Cay(\mathbb{Z}_{n_1}, \pm 1,\dots, \pm (r+1))$ & $Cay(\mathbb{Z}_{n_2}, \pm 1, \dots, \pm r)$ & 1 & Example \ref{example2d}\\
      $n_1=(r+2)k+r$&$n_2=(r+1)k+r$&&$r,k\in\mathbb{N}$
      \\
      \hline
      $C_{2k+1}$ & $J(4k+2,3)$ & $1/4$ & Example \ref{examassig2}
       \\
      \hline
      $C_{2k}$ & $J(4k-1,3)$ & $k/(4k-3)$ & Example \ref{examassig2}\\
      &&&$k> 3$
      \\
      \hline
      & & & Appendix A, $n,m>1$ \\
      $C_n$ & $C_m$ & $n/(m-1)$ & if $n$ even and $m$ odd \\
      & & $n/m$ & if $m$ even \\
      & & $n/m$ & if $n,m$ odd and $n\leq m$ \\
      & & $n/(m-1)$ & if $n,m$ odd and $n> m$ \\
      \hline
    \end{tabular}
    \caption{Values of $\alpha^*(G|H)$ for some graph pairs; $Cay$ is the Cayley graph, $C_n$ is a cycle of length $n$ and $J(n,k)$ is the Johnson graph.}
    \label{table1}
\end{table}

\begin{definition}
    Given two arbitrary graphs $H$ and $G$, 
    let $\mathcal{F}(H,G)$ be the set of functions $f:\mathcal{V}(G)\rightarrow\mathcal{I}(H)$ with the following property: 
$f(v_1)$ and $f(v_2)$ are disconnected in $H$ if there is no edge between $v_1$ and $v_2$ in $G$.
    Equivalently, let $\hat{H}$ be $(H^*)^c$ with an added loop for the vertex corresponding to $\emptyset$ to itself. Then, $\mathcal{F}(H,G)$ is the class of all
homomorphisms from $G^c$ to $\hat H$.  
\end{definition}
\begin{remark}
    If a homomorphism $\mathsf{g}$ from $H$ to $G$ exists, for any vertex $w\in \mathcal V(G)$, the set $\{v\in \mathcal V(H): \mathsf{g}(v)=w\}$  must be an independent set in $H$ (empty set is considered to be an independent set). Thus, the ``inverse" of the homomorphism is a mapping $\mathsf{g}^{-1}: \mathcal V(G)\rightarrow \mathcal{I}(H)$ with the following properties: \begin{enumerate}[(i)]
    \item $\mathsf{g}^{-1}(v_1)$ and $\mathsf{g}^{-1}(v_2)$ are disconnected in $H$ if there is no edge between $v_1$ and $v_2$ in $G$, 
    \item The sets $\{\mathsf{g}^{-1}(v)\}$ for $v\in \mathcal{V}(G)$ form a partition of $\mathcal{V}(H)$.
\end{enumerate}
If we relax the condition (ii) above, and just keep (i), we recover the definition of $\mathcal{F}(H,G)$.

\end{remark}

\section{Two approaches for finding functions $\Gamma(G,H)$ satisfying
\eqref{eqnv31d}}

\subsection{The first approach}

Let us begin with the naive version of the first approach. 
Let $G$ be a graph with $k$ vertices $\mathcal{V}(G)=\{1,2,\cdots, k\}$. Assume that we have a collection of $k$ independent sets in $H$, denoted by $\mathcal T_1, \cdots, \mathcal T_k\in \mathcal I(H)$. We associate $\mathcal{T}_i$ with the $i$-th vertex in $G$. Independent sets $\mathcal T_1, \cdots, \mathcal T_k\in \mathcal I(H)$ satisfy the following property: $ \mathcal T_i$ and $\mathcal T_j$ are disconnected in $H$ if there is no edge between $v_i$ and $v_j$ in $G$ (see Definition \ref{def2sep}). Then, we claim that 
$$\alpha(H)\geq \alpha(G)\min_{v\in\mathcal V(G)}|\mathcal{T}_v|.$$
The reason is that for any independent set $\mathcal{S}$ in $G$, 
$\bigcup_{i\in\mathcal{S}}\mathcal{T}_i$
is an independent set in $H$ of size
$$\sum_{i\in\mathcal{S}}|\mathcal{T}_i|\geq |\mathcal{S}|\min_{v\in\mathcal V(G)}|\mathcal{T}_v|.$$

More generally, for any natural number $n$, we have
$$\alpha(H^{\boxtimes n})\geq \alpha(G^{\boxtimes n})\left(\min_{v\in\mathcal V(G)}|\mathcal{T}_v|\right)^n$$
because to a vertex $(v_1,v_2,\cdots,v_n)\in G^{\boxtimes n}$, we can assign the independent set $\mathcal{T}_{v_1}\times \mathcal{T}_{v_2}\times ...\times \mathcal{T}_{v_n}$ in $H^{\boxtimes n}$. 

The smallest $\Gamma(G,H)$ which can be obtained using the above approach is as follows:
$$\Gamma_0(G,H)=\min_{f} \max_{v\in\mathcal V(G)} \frac{1}{|f(v)|}$$
where the minimum is over deterministic functions $f\in \mathcal{F}(H,G)$. To see this, observe that functions $f\in \mathcal{F}(H,G)$ are equivalent with the assignment of independent sets $\mathcal{T}_v$ to the vertices of $G$. We set $\Gamma_0(G,H)=\infty$ if $\min_v|f(v)|=0$ for all functions $f\in \mathcal{F}(H,G)$.

By the above discussion, the function $\Gamma_0(G,H)$ satisfies 
\begin{align}
    \alpha(G^{\boxtimes n})\leq \alpha(H^{\boxtimes n})\Gamma_0(G,H)^n.
\end{align}
The utility of this approach is illustrated in the following example:
\begin{example}\label{examassig1}
Let $J(n,3)$ be the Johnson graph whose vertices are the $3$-subsets of $\{1,\dots, n\}$, and two vertices are adjacent if their intersection has one element.\footnote{We use the definition of \cite{haemers}, Some papers call the complement of this graph the Johnson graph.}
Haemers \cite{haemers} showed that $\mathcal H(J(n,3))\leq n$ and the equality holds for $n=4k$. 
Using the above approach, we compute a new lower bound for $\mathscr{C}(J(n,3))$ for $n=14,18,22,26$. This new lower bound implies that $\mathcal H(J(n,3))= n$ for $n=14,18,22,26$, which is unknown in the literature.\footnote{For $n=4k$, if we partition the underlying $n$-set into classes of size four, then all $3$-subsets, which are subsets of one of these classes, form an independent set of size $n$ in $J(n,3)$. Hence, $\alpha(J(n,3))\geq n$ when $n$ is divisible by $4$. Since $\mathcal H(J(n,3))\leq n$, \cite{haemers} deduces that $\alpha(J(n,3))=\mathscr C(J(n,3))=\mathcal H(J(n,3))$.
However, when $n$ is not divisible by $4$, the exact value of $\mathcal H(J(n,3))$ was unknown in \cite{haemers}.}

Let $C_n$ be a cycle of size $n$. Since $\alpha(C_{2k+1})=k$ and $\alpha (J(4k+2,3))=4k$ \cite{nagy1972certain}, we obtain:
$$\Gamma_0(C_{2k+1},J(4k+2,3))\geq \frac{\alpha(C_{2k+1})}{\alpha(J(4k+2,3))}=\frac{1}{4}. $$ 
    
    Consider the following assignment of the independent sets of $J(4k+2,3)$ to the vertices of $C_{2k+1}$. We assign the set $\mathcal T_i$ to $v_i$ for $1\leq i\leq 2k+1$ as follows: for $i\neq 2k+1$, let $\mathcal T_i=$ all the $3$-subsets of $\{2i-1, 2i, 2i+1,2i+2\}$ and $\mathcal T_{2k+1}=$ all the $3$-subsets of $\{4k+1, 4k+2, 1, 2\}$. 
This assignment shows that
$$\Gamma_0(C_{2k+1},J(4k+2,3))\leq\frac{1}{4}.$$
Therefore,
$$\Gamma_0(C_{2k+1},J(4k+2,3))=\frac{1}{4}.$$
Consequently, from \eqref{eqnv31dNN} we obtain
$$\frac{\mathscr C(C_{2k+1})}{\mathscr C(J(4k+2,3))}\leq \frac{1}{4}
.$$
Therefore, $\mathscr C(C_{2k+1})\leq \frac{1}{4}\mathscr C(J(4k+2,3))\leq \frac{1}{4} \mathcal H(J(4k+2,3))$, yielding the lower bounds $\mathscr C(J(4k+2,3))\geq 4\mathscr C(C_{2k+1})$. We also deduce that $\mathcal H(J(4k+2,3))\geq \lceil 4\mathscr C(C_{2k+1})\rceil$ where we can take the ceiling as $\mathcal H(J(n,3))$ is an integer number. Since $\mathscr C(C_7)\geq 3.2578$ \cite{polak}, $\mathscr C(C_9)\geq 4.32$ \cite{baumert}, $\mathscr C(C_{11})\geq 5.2895$ \cite{baumert}, $\mathscr C(C_{13})\geq 6.2743$ \cite{bohman2013}, we get new lower bounds on the Shannon capacity and can conclude $\mathcal H(J(n,3))=n$ for $n=14,18,22,26$. A better lower bound for $\mathscr C(C_{2k+1})$ would also improve the lower bound of $\mathscr C(J(4k+2,3)$.
Also, Theorem 1.1 in \cite{bohman} states that $\lim_{n\rightarrow \infty} n+\frac{1}{2}-\mathscr{C}(C_{2n+1})=0$, so we conclude $\lim_{n\rightarrow \infty} 4n+2-\mathscr{C}(J(4n+2,3))=0$.

\end{example}

The approach discussed thus far can be weak when $|\mathcal{T}_v|$ varies significantly for different vertices of $G$. In particular, it does not provide any useful result if $|\mathcal{T}_v|=0$ for only one vertex $v\in G$. To address this issue, we propose a randomized version of the above approach. Consider an arbitrary probability distribution $p(f)$ over $f\in \mathcal{F}(H,G)$. Define
$$\Gamma_1(G,H)=\min\max_{v\in\mathcal V(G)} \frac{1}{\mathbb{E}[|F(v)|]}.$$
where the minimum is over all distributions $p(f)$ on $\mathcal{F}(H, G)$ of the random function $F(\cdot)$. We have the following theorem:
\begin{theorem}
    The function $\Gamma_1(G,H)$ satisfies
\eqref{eqnv31d}.
\end{theorem}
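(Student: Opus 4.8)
The plan is to derive the bound from a randomized product construction, the key idea being to take $n$ \emph{independent} copies of the random function so that the relevant expectation factorizes across coordinates. If the minimum defining $\Gamma_1(G,H)$ equals $+\infty$ there is nothing to prove, so fix a distribution $p(f)$ on $\mathcal{F}(H,G)$ attaining a finite value, write $\gamma=\Gamma_1(G,H)$, and let $F_1,\dots,F_n$ be independent copies of the associated random function $F$. Let $\mathcal{S}$ be a maximum independent set of $G^{\boxtimes n}$, so $|\mathcal{S}|=\alpha(G^{\boxtimes n})$, and to each tuple $\mathbf{v}=(v_1,\dots,v_n)\in\mathcal{S}$ assign the random product set $F_1(v_1)\times\cdots\times F_n(v_n)\subseteq \mathcal{V}(H)^n=\mathcal{V}(H^{\boxtimes n})$. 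Let $\mathcal{U}$ be the union of these product sets over $\mathbf{v}\in\mathcal{S}$.

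First I would verify that $\mathcal{U}$ is, for every realization of $F_1,\dots,F_n$, an independent set of $H^{\boxtimes n}$, splitting into two cases. If two $H^{\boxtimes n}$-tuples come from the same $\mathbf{v}$, then in each coordinate $i$ they lie in the independent set $F_i(v_i)\in\mathcal{I}(H)$, so wherever they differ they are nonadjacent in $H$; hence the coordinates cannot all be equal-or-adjacent and each product set is itself independent in $H^{\boxtimes n}$. If they come from distinct $\mathbf{v},\mathbf{v}'\in\mathcal{S}$, then since $\mathcal{S}$ is independent there is a coordinate $i$ with $v_i\neq v_i'$ and $v_iv_i'\notin\mathcal{E}(G)$; the defining property of $\mathcal{F}(H,G)$ forces $F_i(v_i)$ and $F_i(v_i')$ to be disconnected in $H$, i.e. disjoint with independent union, so the sampled entries in coordinate $i$ are distinct and nonadjacent in $H$. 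This gives nonadjacency in $H^{\boxtimes n}$, and at the same time shows the two product sets are disjoint.

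Because distinct vertices of $\mathcal{S}$ yield disjoint product sets, the cardinality of $\mathcal{U}$ is exactly $\sum_{\mathbf{v}\in\mathcal{S}}\prod_{i=1}^n |F_i(v_i)|$. Taking expectations and using that $F_1,\dots,F_n$ are independent and identically distributed gives $\mathbb{E}[|\mathcal{U}|]=\sum_{\mathbf{v}\in\mathcal{S}}\prod_{i=1}^n \mathbb{E}[|F(v_i)|]$. By the choice of $p(f)$ we have $\mathbb{E}[|F(v)|]\geq 1/\gamma$ for every $v\in\mathcal{V}(G)$, so each product is at least $\gamma^{-n}$ and hence $\mathbb{E}[|\mathcal{U}|]\geq |\mathcal{S}|\gamma^{-n}=\alpha(G^{\boxtimes n})\gamma^{-n}$. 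On the other hand $\mathcal{U}$ is always an independent set in $H^{\boxtimes n}$, so $|\mathcal{U}|\leq \alpha(H^{\boxtimes n})$ surely and therefore $\mathbb{E}[|\mathcal{U}|]\leq \alpha(H^{\boxtimes n})$. Chaining the two bounds yields $\alpha(G^{\boxtimes n})\leq \alpha(H^{\boxtimes n})\gamma^n$, which is precisely \eqref{eqnv31d}.

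I expect the only genuinely delicate step to be the independence verification in the cross-vertex case, where one must extract a single coordinate $i$ witnessing both $v_i\neq v_i'$ and nonadjacency in $G$, and then correctly translate ``disconnected'' (disjointness together with independent union) into both distinctness and nonadjacency of the sampled $H$-coordinates; this simultaneously secures independence of $\mathcal{U}$ and disjointness of the product sets, the latter being what lets the cardinality add up cleanly. The product-over-coordinates structure with i.i.d.\ copies is exactly what makes the expectation factor as $\prod_i \mathbb{E}[|F(v_i)|]$; a single shared random function would not factorize, and one would be forced back to a minimum-based quantity like $\Gamma_0$ rather than the expectation-based $\Gamma_1$.
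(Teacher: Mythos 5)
Your proof is correct and follows essentially the same route as the paper: construct a random independent set in $H^{\boxtimes n}$ by assigning product sets built from random functions in $\mathcal{F}(H,G)$ to a maximum independent set of $G^{\boxtimes n}$, then compare the expected size against $\alpha(H^{\boxtimes n})$. The only difference is that you make explicit the use of $n$ i.i.d.\ copies of $F$ so that $\mathbb{E}\bigl[\prod_i |F_i(v_i)|\bigr]$ factorizes into $\prod_i \mathbb{E}[|F(v_i)|]$ --- a point the paper leaves implicit when it writes $h(v_1,\dots,v_n)=(f(v_1),\dots,f(v_n))$ for a single $f$ --- and this is exactly the right way to fill in that step.
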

\begin{proof}
Let $\mathcal{S}$ be a maximum independent set of $G$. For any $f \in \mathcal{F}(H, G)$, $\bigcup_{v \in \mathcal{S}} f(v)$ is an independent set of size $\sum_{v \in \mathcal{S}} |f(v)|$. Denote this independent set by $\mathcal{T}$. For a random function $F \in \mathcal{F}(H, G)$ we have,
\begin{equation*}
    \mathbb{E}[|\mathcal{T}|] = \mathbb{E}\left[\sum_{v \in \mathcal{S}} |F(v)|\right] = \sum_{v \in \mathcal{S}} \mathbb{E}[|F(v)|] \geq \left(\min_{v\in\mathcal V(G)} \mathbb{E}[|F(v)|]\right) \cdot \alpha(G).
\end{equation*}
Consequently, $\alpha(H) \geq (\min_{v\in\mathcal V(G)} \mathbb{E}[|F(v)|]) \cdot \alpha(G)$ and therefore, $$\frac{\alpha(G)}{\alpha(H)} \leq \max_{v\in\mathcal V(G)} \frac{1}{\mathbb{E}[|F(v)|]}$$

More generally, since any $f \in \mathcal{F}(H, G)$ can be transformed to $h \in \mathcal{F}(H^{\boxtimes n}, G^{\boxtimes n})$ for any choice of $n$, where $h(v_1, v_2, \cdots, v_n) = (f(v_1), f(v_2), \cdots, f(v_n))$, we have,
$$\frac{\alpha(G^{\boxtimes n})}{\alpha(H^{\boxtimes n})} \leq \left(\max_{v\in\mathcal V(G)} \frac{1}{\mathbb{E}[|F(v)|]}\right)^n.$$
Therefore, \eqref{eqnv31d} is satisfied for any distribution over $\mathcal{F}(H, G)$, including the one that minimizes $\max_{v\in\mathcal V(G)} \frac{1}{\mathbb{E}[|F(v)|]}$ so $\Gamma_1(G,H)$ satisfies
\eqref{eqnv31d}.
\end{proof}

\begin{example}\label{examassig2}
In Example \ref{examassig1}, we showed that
$$\Gamma_0(C_{2k+1},J(4k+2,3))=\frac{1}{4}.$$
Since
$$\frac{1}{4}=\frac{\alpha(C_{2k+1})}{\alpha(J(4k+2,3))}\leq
\Gamma_1(C_{2k+1},J(4k+2,3))\leq \Gamma_0(C_{2k+1},J(4k+2,3))=\frac{1}{4}$$
we deduce that
$$\Gamma_1(C_{2k+1},J(4k+2,3))=\Gamma_0(C_{2k+1},J(4k+2,3))=\frac{1}{4}.$$
    Below, we provide an example for which $\Gamma_0(G,H)>\Gamma_1(G,H)$. Let $G = C_{2k} $ and $H = J(4k-1, 3) $. We claim that for $k>3$
$$\Gamma_0(G,H)=\frac13>\Gamma_1(G,H)=\frac{k}{4k-3}.$$

We first show that $$\Gamma_1(G,H)=\frac{k}{4k-3}.$$
Since $\alpha(C_{2k})=k$ and $\alpha(J(4k-1,3))=4k-3$ \cite{nagy1972certain}, we obtain
$$\frac{k}{4k-3}=\frac{\alpha(G)}{\alpha(H)}\le \Gamma_1(G,H).$$

Let $\mathcal{V}=\{1,2,\cdots, 2k\}$. Let $f\in \mathcal F(G,H)$ be an assignment. Since $G$ is a cyclic graph, we can consider a random  $F\in \mathcal F(G,H)$ by randomly shifting  $f$ as follows:
$$F(v)=f(v+T)$$
where $T$ is uniform shift over $\mathcal{V}$ (modulo $2k$). In this case, 
$$\mathbb{E}[|F(v)|]=\frac{\sum_{v'\in G} |f(v')|}{2k}, \qquad\forall v.$$
Consequently, we obtain the following upper bound on $\Gamma_1(G,H)$:
$$\Gamma_1(G,H)\le \frac{2k}{\sum_{v\in G} |f(v)|}.$$
Consider the following function $f$: For each $i \leq 2k-2$ , let $j = 2i - (i \mod 2)$  and let $f(i)$  be all the 3-subsets of $\{ j, j+1, j+2, j+3 \}$. Also, let $f(2k-1) = \{\{ 4k-3, 4k-2, 4k-1 \}\} $ and $f(2k) = \{\{ 1, 2, 3 \}\} $. This gives a valid assignment and $\sum_i |f(v_i)| = 4(2k-2)+1+1=2(4k-3)$.

Next, we show that $$\Gamma_0(G,H)=\frac13.$$
Note that the above assignment $f$ shows that $\Gamma_0(G,H)\le \frac{1}{3}$.
Since $\Gamma_0(G,H)\geq \Gamma_1(G,H)$ and $1/\Gamma_0(G,H)$ is an integer, we obtain that
$$\Gamma_0(G,H)\geq \frac{1}{\lfloor\frac{1}{\Gamma_1(G,H)}\rfloor}=\frac13.$$
Thus, $$\Gamma_0(G,H)=\frac13.$$
\end{example}

\subsection{The second approach}
Motivated by the auxiliary receiver approach \cite{ITpaper}, we consider two graphs, $G$ and $H$, and write a bound on the ratio of their capacities as follows:
\begin{align}\frac{\mathscr{C}(G)}{\mathscr{C}(H)}&= \lim_{n \rightarrow \infty}  \left(\frac{\alpha(G^{\boxtimes n})}{\alpha(H^{\boxtimes n})}\right)^{\frac{1}{n}}\label{eqnDefC2}
\\&
= \lim_{n \rightarrow \infty}  \left(\prod_{i=1}^n\frac{\alpha(G^i\boxtimes H^{n-i})}{\alpha(G^{i-1}\boxtimes H^{n-i+1})}\right)^{\frac{1}{n}}\nonumber
\\&
= \lim_{n \rightarrow \infty}  \left(\prod_{i=1}^n\frac{\alpha(G\boxtimes W_i)}{\alpha(H\boxtimes W_i)}\right)^{\frac{1}{n}}\label{eqnDefC3A}\\&\leq
\sup_{W}\frac{\alpha(G\boxtimes W)}{\alpha(H\boxtimes W)}
\label{eqnDefC4}
\end{align}
where in \eqref{eqnDefC3A} we set $W_i=G^{i-1}\boxtimes H^{n-i}$ and the supremum in \eqref{eqnDefC4} is taken over all graphs $W$.

Define
\begin{align}
   \alpha^*(G|H)=\sup_{W}\frac{\alpha(G\boxtimes W)}{\alpha(H\boxtimes W)}, \label{EqnDef}
\end{align}
where the supremum is over \emph{all}  graphs $W$.

By considering the special case of $W$ having a single vertex, it is clear that 
$$\alpha^*(G|H)\geq \frac{\alpha(G)}{\alpha(H)}.$$
Consequently, to show that $\alpha^*$ satisfies
\eqref{eqnv31d}, it suffices to show that
$$\alpha^*(G|H)^n\geq \alpha^*(G^{\boxtimes n}|H^{\boxtimes n}), \qquad \forall n.$$
This claim follows from the following lemma:
\begin{lemma}For any graphs $G_1, G_2, H_1$ and $H_2$ we have:
    $$\alpha^*(G_1\boxtimes G_2|H_1\boxtimes H_2)\leq \alpha^*(G_1|H_1)\alpha^*(G_2|H_2).$$ 
\end{lemma}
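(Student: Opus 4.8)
The plan is to exploit the multiplicative structure built into the definition $\alpha^*(G|H)=\sup_W \alpha(G\boxtimes W)/\alpha(H\boxtimes W)$, together with the fact that the strong product is commutative and associative up to isomorphism (and $\alpha(\cdot)$ is isomorphism-invariant). Fix an arbitrary auxiliary graph $W$. The key move is to telescope the ratio that appears on the left-hand side by inserting a single intermediate term in which $G_2$ is swapped for $H_2$ while $G_1$ is kept:
\begin{align*}
\frac{\alpha(G_1\boxtimes G_2\boxtimes W)}{\alpha(H_1\boxtimes H_2\boxtimes W)}
=\frac{\alpha(G_1\boxtimes G_2\boxtimes W)}{\alpha(H_1\boxtimes G_2\boxtimes W)}\cdot\frac{\alpha(H_1\boxtimes G_2\boxtimes W)}{\alpha(H_1\boxtimes H_2\boxtimes W)}.
\end{align*}
This identity is valid because every denominator is at least $1$ (any nonempty graph has a one-vertex independent set), so no division is undefined.

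The next step is to bound each of the two factors by the corresponding $\alpha^*$. For the first factor, I would set $W'=G_2\boxtimes W$; then by associativity $G_1\boxtimes G_2\boxtimes W\cong G_1\boxtimes W'$ and $H_1\boxtimes G_2\boxtimes W\cong H_1\boxtimes W'$, so the first factor equals $\alpha(G_1\boxtimes W')/\alpha(H_1\boxtimes W')\le \alpha^*(G_1|H_1)$, since $W'$ is just one admissible choice in the supremum defining $\alpha^*(G_1|H_1)$. For the second factor I would use commutativity to rewrite $H_1\boxtimes G_2\boxtimes W\cong G_2\boxtimes(H_1\boxtimes W)$ and $H_1\boxtimes H_2\boxtimes W\cong H_2\boxtimes(H_1\boxtimes W)$; setting $W''=H_1\boxtimes W$, the second factor equals $\alpha(G_2\boxtimes W'')/\alpha(H_2\boxtimes W'')\le \alpha^*(G_2|H_2)$.

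Combining the two bounds gives, for every $W$,
\begin{align*}
\frac{\alpha(G_1\boxtimes G_2\boxtimes W)}{\alpha(H_1\boxtimes H_2\boxtimes W)}\le \alpha^*(G_1|H_1)\,\alpha^*(G_2|H_2),
\end{align*}
and taking the supremum over all $W$ on the left yields exactly $\alpha^*(G_1\boxtimes G_2|H_1\boxtimes H_2)\le \alpha^*(G_1|H_1)\alpha^*(G_2|H_2)$, since $G_1\boxtimes G_2\boxtimes W$ and $H_1\boxtimes H_2\boxtimes W$ are the graphs appearing in the definition of the left-hand side. I do not expect a genuine obstacle here: the proof is essentially a one-line telescoping argument, and the only things to be careful about are (i) keeping track of which copy of the product is absorbed into the auxiliary graph $W'$ versus $W''$, and (ii) invoking commutativity/associativity of $\boxtimes$ at the right moments so that the supremum in each $\alpha^*(G_i|H_i)$ legitimately dominates the relevant factor. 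The mild subtlety worth flagging is the boundary behavior when some $\alpha^*$ is infinite, but the inequality holds trivially in that case.
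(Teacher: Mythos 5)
Your proof is correct and is essentially identical to the paper's own argument: both insert the intermediate term $\alpha(H_1\boxtimes G_2\boxtimes W)$ to telescope the ratio, then absorb $G_2\boxtimes W$ (resp.\ $H_1\boxtimes W$) into the auxiliary graph to bound each factor by $\alpha^*(G_1|H_1)$ and $\alpha^*(G_2|H_2)$. The extra care you take about nonzero denominators and associativity is fine but not needed beyond what the paper already does implicitly.
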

\begin{proof}
We have
    \begin{align*}
\alpha^*(G_1\boxtimes G_2|H_1\boxtimes H_2)&=\sup_{W}\frac{\alpha(W\boxtimes G_1\boxtimes G_2)}{\alpha(W\boxtimes H_1\boxtimes H_2)}=\sup_{W}\frac{\alpha(W\boxtimes G_1\boxtimes G_2)}{\alpha(W\boxtimes H_1\boxtimes G_2)}\cdot \frac{\alpha(W\boxtimes H_1\boxtimes G_2)}{\alpha(W\boxtimes H_1\boxtimes H_2)}
\\&\leq
\sup_{W}\frac{\alpha((W\boxtimes G_2)\boxtimes G_1)}{\alpha((W\boxtimes G_2)\boxtimes H_1)}\cdot \sup_{W}\frac{\alpha((W\boxtimes H_1)\boxtimes G_2)}{\alpha((W\boxtimes H_1)\boxtimes H_2)}
\\&\leq \alpha^*(G_1|H_1)\alpha^*(G_2|H_2).
\end{align*}
\end{proof}

Consider the special case of graph $H$ having a single vertex. In this case, we obtain 
\begin{align}\alpha^*(G|H)=\sup_{W}\frac{\alpha(G\boxtimes W)}{\alpha(W)}.\label{eqnNNfN}\end{align}
The optimization problem in \eqref{eqnNNfN} was considered by Hales in \cite{hales1973numerical} who showed that the supremum in \eqref{eqnNNfN} is a maximum and moreover,  the value of the maximum is equal to the \emph{fractional independence number} of a graph $G$ (or \emph{Rosenfeld number}), which is defined via a linear program
 as follows:
\begin{align}\alpha^*(G):=\max \sum_{v\in \mathcal V(G)}w_v\label{eqnpor1}\end{align}
where the maximum is over all weights $w_v\geq 0$ such that $\sum_{v\in \mathcal S}w_v\leq 1$ for every clique $\mathcal S\subset \mathcal V(G)$.\footnote{{Some papers define the fractional independence number differently as the 
maximum sum of non-negative weights in $[0,1]$ that can be assigned to the vertices of a graph while ensuring that the total weight on any edge does not exceed 1. Here, we require the stronger condition that the total weight of any arbitrary clique does not exceed 1.}
} In other words, the fractional independence number of $G$ is the fractional clique number of $G^c$ which is also equal to the fractional chromatic number of $G^c$. 

Since $\alpha^*(G|H)$ is equal to $\alpha^*(G)$, the \emph{fractional independence number} of $G$ when $H$ is a one-vertex graph, we name $\alpha^*(G|H)$ the \emph{relative fractional independence number} of $G$ with respect to $H$. It turns out that $\alpha^*(G|H)$ can be also characterized in terms of a linear program that generalizes the one in \eqref{eqnpor1}. However, its form is non-trivial and more involved than the one given in \cite{hales1973numerical}.

\subsection{Equivalence of the two approaches}
In this section, we provide a characterization of $\alpha^*(G|H)$ as defined in \eqref{eqnNNfN}. Observe that the third part of this characterization implies that $\Gamma_1(G,H)=\alpha^*(G|H)$. 
\begin{theorem}\label{thm1} 
Assume that $G$ is a graph with $k$ vertices and $\mathcal V (G)=\{v_1,v_2,\cdots, v_k\}$. Then, we have
\begin{enumerate}[(i)]
\item \begin{align}\alpha^*(G|H)=\max_{\mathbf w} \sum_{i=1}^k w_i,\label{eqnNewe3q}\end{align}
where the maximization over $\mathbf w=(w_1, w_2, \cdots, w_k)$ is subject to $w_i\geq 0$ and
$\sum_{i=1}^kw_i |f(v_i)|\leq 1$
for any $f\in \mathcal{F}(H,G)$. In other words, we require $\sum_{i=1}^kw_i |\mathcal T_i|\leq 1$ for any collection of sets $\mathcal T_1, \cdots, \mathcal T_k\in \mathcal I(H)$ such that $ \mathcal T_i$ and $\mathcal T_j$ are disconnected in $H$ if there is no edge between $v_i$ and $v_j$ in $G$. 

\item $$\frac{1}{\alpha^*(G|H)}= \min_{w_i\geq 0, \sum_{i=1}^kw_i=1} \max_{f}\sum_{i=1}^k w_i|f(v_i)|,$$
where the maximum is over all  $f\in \mathcal{F}(H,G)$.
\item Given a probability distribution $p(f)$ over $f\in \mathcal{F}(H,G)$, we can consider a random $F\in \mathcal{F}(H,G)$. We have
$$\alpha^*(G|H)=\min \max_{v\in\mathcal V(G)} \frac{1}{\mathbb{E}[|F(v)|]},$$
where the minimum is over all possible distributions on the set $\mathcal{F}(H,G)$.

\item For every pair of graphs $G$ and $H$, there is some graph $W$ such that 
$$\alpha^*(G|H)=\frac{\alpha(G\boxtimes W)}{\alpha(H\boxtimes W)}.$$
In other words, the supremum in the definition of $\alpha^*(G|H)$ is a maximum.
\end{enumerate}
\end{theorem}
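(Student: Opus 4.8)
The plan is to treat the four statements as two separate tasks: first, to prove the ``internal'' equivalences (i)$\Leftrightarrow$(ii)$\Leftrightarrow$(iii), which are purely linear-programming facts, and second, to tie this common value to the graph-theoretic definition \eqref{eqnNNfN} and to establish attainment (iv), which is where the combinatorial work lies. The single structural fact I would isolate at the outset is a dictionary for independent sets of a strong product: an independent set of $G\boxtimes W$ is the same data as an assignment $A:\mathcal V(W)\to\mathcal I(G)$ for which $A_w$ and $A_{w'}$ are disconnected in $G$ whenever $ww'\in\mathcal E(W)$, its size being $\sum_{w}|A_w|$. Consequently $\alpha(G\boxtimes W)=\max_A\sum_w|A_w|$, and likewise for $H$. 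This reformulation makes the edges of $W$ act exactly like the ``disconnectedness'' constraints defining $\mathcal F(H,G)$, and it is the bridge to the linear program in (i).

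For the internal equivalences I would write (i) as the primal program $\max\{\mathbf 1^\top\mathbf w:\ \mathbf w\ge 0,\ \sum_i w_i|f(v_i)|\le 1\ \forall f\in\mathcal F(H,G)\}$, noting that $\mathcal F(H,G)$ is finite so the program is finite-dimensional. Its dual assigns a nonnegative multiplier $y_f$ to each constraint and minimizes $\sum_f y_f$ subject to $\sum_f y_f|f(v_i)|\ge 1$ for all $i$; normalizing $p(f)=y_f/\sum_{f'}y_{f'}$ turns the dual objective into exactly $\min_p\max_v 1/\mathbb E_p[|F(v)|]$, which is statement (iii). Statement (ii) is the standard homogenization of (i): writing $\mathbf w=t\hat{\mathbf w}$ with $\sum_i\hat w_i=1$ converts $\max\sum_i w_i$ into $1/\min_{\hat{\mathbf w}}\max_f\sum_i\hat w_i|f(v_i)|$. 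Thus strong LP duality (with von Neumann's minimax for the (ii)$\leftrightarrow$(iii) passage) yields a single common value, which I denote $L$.

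It then remains to prove $\alpha^*(G|H)=L$ and that the supremum is attained. The upper bound $\alpha^*(G|H)\le L$ is the easy direction: for any $W$ and any random $F\in\mathcal F(H,G)$, the union $\bigcup_{(v,w)\in\mathcal S}F(v)\times\{w\}$ over a maximum independent set $\mathcal S$ of $G\boxtimes W$ is independent in $H\boxtimes W$ (the disconnectedness property is precisely what is needed to verify the product adjacency in each case), so $\alpha(H\boxtimes W)\ge\alpha(G\boxtimes W)\min_v\mathbb E[|F(v)|]$, and optimizing over $F$ gives $\alpha(G\boxtimes W)/\alpha(H\boxtimes W)\le L$ for every $W$. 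For the matching lower bound and attainment, I would take an optimal rational vertex of the primal, $w_i=a_i/D$ with $a_i\in\mathbb Z_{\ge 0}$ and $\sum_i w_i=L$, and let $W$ be the blow-up of $G^c$ in which each vertex $v_i$ is replaced by $a_i$ mutually nonadjacent copies. The dictionary identifies $\alpha(H\boxtimes W)$ with $\max\sum_{i,t}|B_{(i,t)}|$ over assignments whose bags are coupled across nonadjacent pairs of $G$, and I claim this equals $D$: taking $B_{(i,t)}=f(v_i)$ for a fixed $f$ shows ``$\ge D$'', while for the reverse I set $U_i=\bigcup_t B_{(i,t)}$, observe that $U_i,U_j$ are disjoint and edge-free in $H$ whenever $v_iv_j\notin\mathcal E(G)$, bound $|B_{(i,t)}|\le\alpha(H[U_i])$, and note that choosing a maximum independent set $I_i\subseteq U_i$ produces a genuine $f=(I_i)_i\in\mathcal F(H,G)$, whence $\sum_{i,t}|B_{(i,t)}|\le\sum_i a_i\alpha(H[U_i])=\sum_i a_i|I_i|\le D$ by primal feasibility of $\mathbf w$. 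The same dictionary applied with $G$ in place of $H$ gives $\alpha(G\boxtimes W)\ge\sum_i a_i$ via the identity assignment $v_i\mapsto\{v_i\}$, so the ratio for this $W$ is at least $\sum_i a_i/D=L$; combined with the upper bound this forces equality and exhibits a maximizing $W$, proving (iv).

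The main obstacle is the exact evaluation $\alpha(H\boxtimes W)=D$ for the blow-up, namely the upper bound $\sum_{i,t}|B_{(i,t)}|\le D$. The subtlety is that copies within a bag carry no mutual constraint, so a priori the within-bag freedom could inflate the independence number beyond what any single $f\in\mathcal F(H,G)$ accounts for; the resolution is that the cross-bag disconnectedness forces every copy in bag $i$ to lie inside the common support $U_i$, and replacing each bag by a maximum independent set of $H[U_i]$ both dominates the original assignment and lands back in $\mathcal F(H,G)$, so that the weighted feasibility constraint $\sum_i a_i|f(v_i)|\le D$ applies. I would dispose of the degenerate case $\alpha^*(G|H)=\infty$ separately (for nonempty $H$ it does not arise, since $f(v_i)=\{h\}$ with all other values empty is always admissible, so every $w_i$ is constrained), and record that the optimal $\mathbf w$ may be taken rational because the program has rational data.
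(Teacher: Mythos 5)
Your proposal is correct. It does not follow the paper's primary proof (Section \ref{proofthm1}, which first restructures the auxiliary graph $W$ via Lemma \ref{lemmaA1} into classes indexed by $\mathcal{I}(G)$ and then collapses the weights to singleton independent sets), but it matches the paper's alternative proof in Appendix B almost step for step: the LP-duality/homogenization equivalences among (i)--(iii), the transfer of a maximum independent set $\mathcal{S}$ of $G\boxtimes W$ to the independent set $\bigcup_{(v,w)\in\mathcal{S}} f(v)\times\{w\}$ of $H\boxtimes W$ for the direction $\alpha^*(G|H)\le L$ (Appendix B phrases this with the feasible weights $w_i=|\mathcal{S}_i|/\alpha(H\boxtimes W)$ rather than with a random $F$; the two are dual and interchangeable once the LP equivalences are in place), and the blow-up of $G^c$ with multiplicities $Nw_i$ for the reverse direction and for attainment in (iv). One place where your argument is in fact tighter than the paper's: Appendix B asserts that in a maximal independent set of $H\boxtimes W$ all copies of $v_i$ carry the same bag, ``otherwise we can enlarge $\mathcal{T}$'' by replacing two bags with their union --- but that union need not be an independent set of $H$, so the enlargement is not always legal; your bound $|B_{(i,t)}|\le\alpha(H[U_i])$ (the induced subgraph of $H$ on $U_i$), together with the check that the maximum independent sets $I_i\subseteq U_i$ form a member of $\mathcal{F}(H,G)$, reaches the needed inequality $\sum_{i,t}|B_{(i,t)}|\le D$ without that claim. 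Two cosmetic points: your parenthetical that a fixed $f$ shows $\alpha(H\boxtimes W)\ge D$ only gives $\ge\sum_i a_i|f(v_i)|$ unless you invoke a tight constraint at the optimal vertex, but equality is never needed since only $\alpha(H\boxtimes W)\le D$ enters the ratio; and you should explicitly set $I_i=\emptyset$ for vertices with $a_i=0$ so that $(I_i)_i$ is defined on all of $\mathcal{V}(G)$.
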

We give two different proofs for the above theorem. 
 While the first proof given in Section \ref{proofthm1} is longer, it provides insights into the form of the linear program. On the other hand, if we already know the form of the linear program, its correctness can be verified directly via the second proof presented in  Appendix B.

Let us outline the first proof.
We would like to maximize 
$\alpha(W \boxtimes G)/\alpha(W \boxtimes H)$. 
Take some arbitrary graph $W$. Let $\mathcal B$ be a maximum independent set for $W \boxtimes G$, \emph{i.e.,} $|\mathcal B|=\alpha(W \boxtimes G)$. Then, for every vertex $u$ of $W$, we form the set
$\mathcal B_{u}=\{v\in\mathcal{V}(G): (u,v)\in \mathcal B\}$.
Next, we partition the vertices of $W$ according to $\mathcal B_{u}$ (vertices with the same set $\mathcal B_{u}$ will be placed in the same class). Next, using this partition, we add some edges to the graph $W$ such that $\mathcal B$ is still a maximum independent set for $W \boxtimes G$, but $\alpha(W \boxtimes H)$ can potentially decrease. Once the edges are added, the graph $W$ will have a particular structure from which we can explicitly compute $\alpha(W \boxtimes H)$ and optimize over $W$.

 {

\section{Properties of $\alpha^*(G|H)$}
This section discusses some properties of $\alpha^*(G|H)$.







\subsection{Properties of $\alpha^*(G|H)$}
From the definition of $\alpha^*(G|H)$ in \eqref{EqnDef}, some immediate observations can be made: for any graph $G$, we have $\alpha^*(G|H)=\alpha^*(G)$ if $H$ is a complete graph. Also, $\alpha^*(G|G)=1$. As another example, assume that the graph $H$ is universal \cite{rosenfeld1967problem}, \emph{i.e.,} $\alpha(H\boxtimes W)=\alpha(H)\alpha(W)$ for all $W$.\footnote{Examples of universal graphs are perfect graphs. A graph is a perfect graph iff its induced subgraphs include neither an odd cycle of length greater than five nor an odd anti-cycle of length greater than five. Examples of perfect graphs are complete graphs or a cycle of even length, $C_{2k}$.} For a universal graph $H$, we have
$$\alpha^*(G|H)=\frac{1}{\alpha(H)}\sup_{W}\frac{\alpha(G\boxtimes W)}{\alpha(W)}=\frac{\alpha^*(G)}{\alpha(H)}.$$
Also, if $G=G_1\boxtimes G_2$ where $G_1$ is universal, then $\alpha^*(G|H)=\alpha(G_1)\alpha^*(G_2|H)$. And if $H=H_1\boxtimes H_2$ where $H_1$ is universal, then $\alpha^*(G|H)=\frac{\alpha^*(G|H_2)}{\alpha(H_1)}$. 
 Some properties of $\alpha^*(G|H)$ are given in the following theorem.  

\begin{theorem}\label{thm2}
For any graphs $G_1, G_2, H_1$ and $H_2$ we have:
\begin{enumerate}[(i)]
\item  $\frac{1}{\alpha^*(H|G)} \leq \frac{\alpha^*(G|W)}{\alpha^*(H|W)} \leq \alpha^*(G|H)$ for any arbitrary graph $W$. 

\item $\alpha^*(G_1+G_2|H)\leq \alpha^*(G_1|H)+\alpha^*(G_2|H)$.

\item $\alpha^*((G_1+G_2)^c|H)=\max(\alpha^*(G_1^c|H),\alpha^*(G_2^c|H)).$

\item  $ \frac{1}{\alpha^*(G|H_1)} +  \frac{1}{\alpha^*(G|H_2)} \leq  \frac{1}{\alpha^*(G|H_1+H_2)}$.  Moreover, equality holds if $G$ is a vertex-transitive graph. 

\end{enumerate}

\end{theorem}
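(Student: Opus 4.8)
The plan is to handle the four parts separately, in each case working directly from the definition $\alpha^*(G|H)=\sup_W \alpha(G\boxtimes W)/\alpha(H\boxtimes W)$ in \eqref{EqnDef}, together with three elementary facts: the strong product distributes over disjoint union, $(A+B)\boxtimes W=(A\boxtimes W)+(B\boxtimes W)$; the independence number is additive over disjoint unions, $\alpha(A+B)=\alpha(A)+\alpha(B)$; and $\alpha^*(\cdot\,|H)$ is monotone under induced subgraphs of its first argument (immediate, since $G'\boxtimes W$ is an induced subgraph of $G\boxtimes W$ when $G'$ is induced in $G$). For part (i) the engine is a ``chain rule'' $\alpha^*(A|C)\le \alpha^*(A|B)\,\alpha^*(B|C)$, proved pointwise in $W$ by inserting $\alpha(B\boxtimes W)$:
$$\frac{\alpha(A\boxtimes W)}{\alpha(C\boxtimes W)}=\frac{\alpha(A\boxtimes W)}{\alpha(B\boxtimes W)}\cdot\frac{\alpha(B\boxtimes W)}{\alpha(C\boxtimes W)}\le \alpha^*(A|B)\,\alpha^*(B|C),$$
and then taking the supremum over $W$. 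Applying this with $(A,B,C)=(G,H,W)$ gives $\alpha^*(G|W)/\alpha^*(H|W)\le\alpha^*(G|H)$, and with $(A,B,C)=(H,G,W)$ gives $1/\alpha^*(H|G)\le \alpha^*(G|W)/\alpha^*(H|W)$, which are the two bounds in (i). Part (ii) is then immediate: by distributivity $\alpha((G_1+G_2)\boxtimes W)=\alpha(G_1\boxtimes W)+\alpha(G_2\boxtimes W)$, so the ratio defining $\alpha^*(G_1+G_2|H)$ splits as a sum of two ratios, and subadditivity of the supremum finishes it.

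For part (iii) I would prove both inequalities. The direction $\max_j\alpha^*(G_j^c|H)\le \alpha^*((G_1+G_2)^c|H)$ follows from monotonicity, since each $G_j^c$ is an induced subgraph of $(G_1+G_2)^c$, which is the join $G_1^c\vee G_2^c$ (every vertex of one side adjacent to every vertex of the other). For the reverse direction, fix $W$ and a maximum independent set $\mathcal I$ of $(G_1^c\vee G_2^c)\boxtimes W$, and split $\mathcal I=\mathcal I_1\sqcup\mathcal I_2$ according to which side of the join the first coordinate lies in. The structural heart of the argument is that the $W$-projections $P_1,P_2$ of $\mathcal I_1,\mathcal I_2$ are disjoint and have no edge between them in $W$: a cross pair has adjacent first coordinates (a join edge), so non-adjacency in the product forces the $W$-coordinates to be distinct and non-adjacent. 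Since $\mathcal I_j$ is an independent set of $G_j^c\boxtimes W[P_j]$, we get $|\mathcal I_j|\le \alpha^*(G_j^c|H)\,\alpha(H\boxtimes W[P_j])$; summing, bounding each coefficient by $\max_j\alpha^*(G_j^c|H)$, and using that $W[P_1]+W[P_2]=W[P_1\cup P_2]$ is an induced subgraph of $W$ (hence $\alpha(H\boxtimes W[P_1])+\alpha(H\boxtimes W[P_2])=\alpha(H\boxtimes W[P_1\cup P_2])\le\alpha(H\boxtimes W)$) yields $|\mathcal I|\le \max_j\alpha^*(G_j^c|H)\cdot\alpha(H\boxtimes W)$. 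Taking the supremum over $W$ completes this direction.

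For part (iv) the inequality comes straight from the definition: for every $W$ we have $\alpha(H_i\boxtimes W)\ge \alpha(G\boxtimes W)/\alpha^*(G|H_i)$, so
$$\alpha((H_1+H_2)\boxtimes W)=\alpha(H_1\boxtimes W)+\alpha(H_2\boxtimes W)\ge \alpha(G\boxtimes W)\Big(\tfrac{1}{\alpha^*(G|H_1)}+\tfrac{1}{\alpha^*(G|H_2)}\Big),$$
and rearranging and taking the supremum over $W$ gives $1/\alpha^*(G|H_1+H_2)\ge 1/\alpha^*(G|H_1)+1/\alpha^*(G|H_2)$. For the equality when $G$ is vertex-transitive I would invoke the characterization $\alpha^*(G|H)=|\mathcal V(G)|/\alpha(G^c\boxtimes H)$ recorded earlier: then $1/\alpha^*(G|H_1+H_2)=\alpha(G^c\boxtimes(H_1+H_2))/|\mathcal V(G)|$, and since $\alpha(G^c\boxtimes(H_1+H_2))=\alpha(G^c\boxtimes H_1)+\alpha(G^c\boxtimes H_2)$ (distributivity plus additivity), this splits exactly into $1/\alpha^*(G|H_1)+1/\alpha^*(G|H_2)$.

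The steps needing the most care are the structural claim in part (iii) — confirming that the two $W$-projections are genuinely separated and that each $\mathcal I_j$ is an independent set of the induced product $G_j^c\boxtimes W[P_j]$, including the degenerate cases where some $P_j=\emptyset$ — and, in part (iv), ensuring the equality case legitimately rests on the vertex-transitive formula (or, for a self-contained version, exhibiting a sequence of graphs $W$ whose ratios approach the harmonic bound). The other manipulations are routine once the supremum definition and the two distributivity identities are in hand.
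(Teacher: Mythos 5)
Your proofs are all correct, and parts (i) and (iv) essentially coincide with the paper's: the upper/lower bounds in (i) come from inserting the intermediate ratio (the paper does this at a maximizer $C$, you do it pointwise in $W$ via a chain rule $\alpha^*(A|C)\le\alpha^*(A|B)\,\alpha^*(B|C)$, which has the minor advantage of not needing part (iv) of Theorem~\ref{thm1} to guarantee a maximizer exists), and (iv) is the same harmonic-sum manipulation followed by the vertex-transitive formula of Theorem~\ref{alphavertran}. Where you genuinely diverge is in (ii) and (iii). The paper proves (ii) by comparing the constraint sets of the linear programs from Theorem~\ref{thm1}(i) ($Cons(G_i|H)\subset Cons(G_1+G_2|H)$), and proves the nontrivial direction of (iii) by combining the optimal random mappings from the minimax characterization of Theorem~\ref{thm1}(iii); you instead work directly from the supremum definition \eqref{EqnDef}, using distributivity of $\boxtimes$ over disjoint union for (ii) and, for (iii), a hands-on decomposition of a maximum independent set of $(G_1^c\vee G_2^c)\boxtimes W$ whose structural heart --- that the two $W$-projections are disjoint with no edges between them, so that $W[P_1\cup P_2]=W[P_1]+W[P_2]$ is an induced subgraph of $W$ --- is sound (the join edge between the two sides forces the $W$-coordinates of any cross pair to be distinct and non-adjacent). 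Your route buys self-containedness: it never invokes the LP/duality machinery of Theorem~\ref{thm1}, and your (ii) is arguably simpler than the paper's. What the paper's route buys is brevity once Theorem~\ref{thm1} is in hand --- the random-mapping argument for (iii) is a two-line combination of optimal distributions, whereas your version requires the careful bookkeeping you yourself flag (the degenerate case $P_j=\emptyset$, and verifying $\mathcal I_j$ is independent in $G_j^c\boxtimes W[P_j]$), all of which does go through.
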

{
\begin{remark}\label{remark2adn}
The first part of Theorem 2 implies that for any three graphs $G$, $H$ and $T$, we have
$\alpha^*(G\boxtimes T|H\boxtimes T)\leq \alpha^*(G|H)$. 
    Next, note that the first part of Theorem 2 holds with equality if $H_1$ and $H_2$ are single vertices. However,  
    strict inequality may hold in the first part of Theorem 2 if $G_1$ and $G_2$ are single vertices as
\[\alpha^*(G_1\boxtimes G_2|H_1\boxtimes H_2)<\alpha^*(G_1|H_1)\alpha^*(G_2|H_2)\]
is equivalent with 
\[\alpha(H_1\boxtimes H_2)>\alpha(H_1)\alpha(H_2)\]
which may occur.
\end{remark}
}

A proof of the above theorem is given in Section \ref{proofthm2}.
To continue, we need a property of the fractional independence number given in the following lemma.

\begin{lemma}
\label{w}
For any arbitrary graphs $G_1, G_2, G_2, \cdots, G_r$, there is some graph $W$ such that 
\begin{align*}
    \alpha^*(G_i)&=\frac{\alpha(G_i\boxtimes W)}{\alpha(W)}, \qquad \forall i.
\end{align*}

That is, a common maximizer $W$ for the optimization problems involving $\alpha^*(G_i)$ exists.
\end{lemma}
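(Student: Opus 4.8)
The plan is to reduce the simultaneous statement to the single-graph case by passing to the disjoint union $G := G_1 + G_2 + \cdots + G_r$ and then exploiting the additivity of $\alpha^*(\cdot)$ over disjoint unions. First I would invoke the already-established fact (Hales \cite{hales1973numerical}, equivalently Theorem \ref{thm1}(iv) applied with a single-vertex second argument) that the supremum in \eqref{eqnNNfN} defining $\alpha^*(G)$ is a maximum: there is a graph $W$ with $\alpha^*(G) = \alpha(G\boxtimes W)/\alpha(W)$. The claim is that this very $W$ is a common maximizer for all the $G_i$ at once.

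To verify this I would use three elementary identities. Because $(A + B)\boxtimes W = (A\boxtimes W)+(B\boxtimes W)$ (there are no edges between the two blocks of the strong product) and $\alpha$ is additive over disjoint unions, we get $\alpha(G \boxtimes W) = \sum_{i=1}^r \alpha(G_i \boxtimes W)$. Next, the fractional independence number itself is additive over disjoint unions, $\alpha^*(G) = \sum_{i=1}^r \alpha^*(G_i)$, since every clique of $G=\sum_i G_i$ is contained in a single component and hence the linear program \eqref{eqnpor1} decouples across the $G_i$. Combining these,
$$\sum_{i=1}^r \alpha^*(G_i) = \alpha^*(G) = \frac{\alpha(G\boxtimes W)}{\alpha(W)} = \sum_{i=1}^r \frac{\alpha(G_i\boxtimes W)}{\alpha(W)}.$$

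To finish, I would compare the two sums term by term. By the definition of $\alpha^*$ as a supremum over all graphs, each term satisfies $\alpha(G_i\boxtimes W)/\alpha(W) \leq \alpha^*(G_i)$. Since the total sums agree, every one of these inequalities must in fact be an equality, giving $\alpha(G_i \boxtimes W)/\alpha(W) = \alpha^*(G_i)$ for all $i$, which is exactly the assertion of the lemma.

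The only place that requires genuine care — and thus the natural ``main obstacle'' — is the additivity $\alpha^*(\sum_i G_i) = \sum_i \alpha^*(G_i)$, together with the invocation of attainment of the supremum for the single graph $G$. Both are standard: the additivity follows directly from the clique-decoupling of the defining LP in \eqref{eqnpor1} (or, equivalently, from the fact that the fractional chromatic number of a disjoint union of the complements is the maximum over components, which dualizes to the sum here), and the attainment is precisely Hales' theorem, already cited above. Once these are in hand the argument is purely formal, so I expect no serious technical difficulty beyond recording these two ingredients cleanly.
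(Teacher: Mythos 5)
Your proof is correct, but it takes a genuinely different route from the paper's. The paper works with the strong product rather than the disjoint union: it picks $W$ attaining $\alpha^*(G_1\boxtimes\cdots\boxtimes G_r)$, invokes the multiplicativity $\alpha^*(G_1\boxtimes\cdots\boxtimes G_r)=\prod_i\alpha^*(G_i)$, and telescopes the ratio $\alpha(G_1\boxtimes\cdots\boxtimes G_r\boxtimes W)/\alpha(W)$ into $r$ factors, each bounded above by the corresponding $\alpha^*(G_i)$, so that equality of the product forces equality factor by factor. You carry out the exact additive analogue: attainment for $G_1+\cdots+G_r$, additivity $\alpha^*\bigl(\sum_i G_i\bigr)=\sum_i\alpha^*(G_i)$, and termwise comparison of a sum against its termwise upper bounds. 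Both are sandwich arguments of the same shape; yours is arguably the more elementary, since additivity of $\alpha^*$ over disjoint unions is immediate from the clique-decoupling of the linear program \eqref{eqnpor1}, and the identity $(A+B)\boxtimes W=(A\boxtimes W)+(B\boxtimes W)$ is already used elsewhere in the paper (proof of Theorem \ref{thm2}, part (iv)), whereas the paper's multiplicativity of $\alpha^*$ over strong products is asserted without proof and is somewhat less immediate. One small correction to your parenthetical alternative justification: $\bigl(\sum_i G_i\bigr)^c$ is the \emph{join}, not the disjoint union, of the $G_i^c$, and it is the fractional chromatic number of a join that equals the sum over the parts; your primary LP argument is the right one and suffices.
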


Proof of the above lemma is given in Section \ref{prooflemm1}. 
\begin{corollary} Given two arbitrary graphs $G$ and $H$, Lemma \ref{w} implies the existence of some $W$ such that
\[\frac{\alpha^*(G)}{\alpha^*(H)}=
\frac{
\frac{\alpha(G\boxtimes W)}{\alpha(W)}
}{
\frac{\alpha(H\boxtimes W)}{\alpha( W)}
}=\frac{\alpha(G\boxtimes W)}{\alpha(H\boxtimes W)}.
\]
Thus, from the definition of $\alpha^*(G|H)$ we obtain
\begin{align}\alpha^*(G|H)\geq\frac{\alpha^*(G)}{\alpha^*(H)}.\label{eqnNNn1}\end{align}
Also, we have:
\begin{align}\alpha^*(G\boxtimes H|H)=\alpha^*(G).\label{eqnImplyH}\end{align}
\end{corollary}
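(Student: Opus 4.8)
The plan is to treat the two displayed conclusions separately, since the first equality chain together with \eqref{eqnNNn1} is essentially immediate from Lemma \ref{w}, whereas \eqref{eqnImplyH} requires one extra ingredient.

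First I would apply Lemma \ref{w} to the two graphs $G_1=G$ and $G_2=H$. This produces a single graph $W$ that is simultaneously a maximizer for both, so that $\alpha^*(G)=\alpha(G\boxtimes W)/\alpha(W)$ and $\alpha^*(H)=\alpha(H\boxtimes W)/\alpha(W)$. Dividing these two identities cancels the common factor $\alpha(W)$ and yields the displayed chain of equalities. Since $\alpha(G\boxtimes W)/\alpha(H\boxtimes W)$ is one particular term of the supremum \eqref{EqnDef} defining $\alpha^*(G|H)$, the supremum can only be at least as large, which is exactly \eqref{eqnNNn1}.

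For \eqref{eqnImplyH} I would prove the two inequalities separately. For the upper bound $\alpha^*(G\boxtimes H|H)\le \alpha^*(G)$, I rewrite the defining supremum as $\alpha^*(G\boxtimes H|H)=\sup_{W'}\alpha(G\boxtimes(H\boxtimes W'))/\alpha(H\boxtimes W')$; each such term is the value of the ratio $\alpha(G\boxtimes V)/\alpha(V)$ appearing in \eqref{eqnNNfN} at the particular graph $V=H\boxtimes W'$, so the supremum over the restricted family $\{H\boxtimes W'\}$ is dominated by the supremum over all $V$, which is $\alpha^*(G)$. For the lower bound I would invoke \eqref{eqnNNn1} with the pair $(G\boxtimes H,H)$ in place of $(G,H)$, giving $\alpha^*(G\boxtimes H|H)\ge \alpha^*(G\boxtimes H)/\alpha^*(H)$, and then reduce to the supermultiplicativity estimate $\alpha^*(G\boxtimes H)\ge \alpha^*(G)\alpha^*(H)$.

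The main obstacle, and the only step not already packaged in the excerpt, is this supermultiplicativity of the fractional independence number. I would prove it directly from the linear program \eqref{eqnpor1}: take optimal weights $(w_v)$ for $G$ and $(w'_{v'})$ for $H$ and assign the product weight $w_v w'_{v'}$ to the vertex $(v,v')$ of $G\boxtimes H$. The point to verify is feasibility, and here the key observation is that any clique $\mathcal S$ of $G\boxtimes H$ projects to a clique $\mathcal S_G$ of $G$ and a clique $\mathcal S_H$ of $H$ with $\mathcal S\subseteq \mathcal S_G\times \mathcal S_H$ (two distinct vertices of $\mathcal S$ are adjacent, which under the strong-product adjacency forces adjacency-or-equality, hence adjacency on distinct projected values, in each coordinate). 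Consequently $\sum_{(v,v')\in\mathcal S}w_v w'_{v'}\le \bigl(\sum_{v\in\mathcal S_G}w_v\bigr)\bigl(\sum_{v'\in\mathcal S_H}w'_{v'}\bigr)\le 1$, so the product assignment is feasible and has total weight $\alpha^*(G)\alpha^*(H)$, proving the claim. Combining, $\alpha^*(G\boxtimes H|H)\ge \alpha^*(G)\alpha^*(H)/\alpha^*(H)=\alpha^*(G)$, which together with the upper bound yields \eqref{eqnImplyH}.
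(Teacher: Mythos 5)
Your proposal is correct and follows essentially the same route as the paper: the equality chain and \eqref{eqnNNn1} come from applying Lemma \ref{w} to the pair $G,H$, the upper bound in \eqref{eqnImplyH} comes from relaxing the supremum over graphs of the form $H\boxtimes W'$ to all graphs, and the lower bound comes from \eqref{eqnNNn1} applied to $(G\boxtimes H,H)$ together with $\alpha^*(G\boxtimes H)\ge\alpha^*(G)\alpha^*(H)$. The only difference is that the paper treats the multiplicativity of the fractional independence number under strong products as known (it is invoked without proof in the proof of Lemma \ref{w}), whereas you supply a correct self-contained proof of the supermultiplicative inequality via the product-weight feasibility and clique-projection argument.
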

\begin{proof}
One direction follows from \eqref{eqnNNn1}. The other direction follows from 
\begin{align*}
\alpha^*(G\boxtimes H|H)
&=\max_{W}\frac{\alpha(G\boxtimes (H\boxtimes W))}{\alpha(H\boxtimes W)}
\leq
\max_{W}\frac{\alpha(G\boxtimes W)}{\alpha(W)}=\alpha^*(G)
\end{align*}
where the inequality follows from the fact that we are relaxing the set of graphs of the form $H\boxtimes W$ to all graphs $W$.
\end{proof}

The inequality \eqref{eqnNNn1} motivates us to ask whether $\alpha^*(G|H)$ serves as an upper bound on the ratio of other graph invariants for $G$ and $H$. The following theorem addresses this question.

\begin{theorem}\label{thm3}
For any graphs $G$ and  $H$,
$$\alpha^*(G|H)\geq \frac{X(G)}{X(H)} \geq \frac{1}{\alpha^*(H|G)}$$
where $X(G)$ can be the independence number of $G$, the fractional independence number of $G$, the Lov\'{a}sz number of $G$ \cite{Lovasz}, Schrijver's variant of the Lov\'{a}sz number of $G$ \cite{Schrijver1979,MRR1978}  or Szegedy's variant of the Lov\'{a}sz number of $G$ \cite{Szegedy1994}. 
\end{theorem}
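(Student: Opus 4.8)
The plan is to prove the chain of inequalities by establishing, for each invariant $X$, that it behaves ``multiplicatively'' under the strong product in a way compatible with the supremum defining $\alpha^*$. The key structural fact I would exploit is that each listed invariant $X$ satisfies $X(G \boxtimes W) \le X(G)\, X(W)$ (submultiplicativity, which in fact holds with equality for the Lovász number and its variants, and with the appropriate inequality for the independence number and fractional independence number). Granting this, I would argue as follows. Since $\alpha^*(G|H) = \sup_W \frac{\alpha(G \boxtimes W)}{\alpha(H \boxtimes W)}$ by \eqref{EqnDef}, and each $X$ sandwiches the independence number in the sense that $\alpha(K) \le X(K)$ together with $X$ being itself an upper bound satisfying the capacity-style inequality $\alpha(K^{\boxtimes n}) \le X(K)^n$, the ratio $X(G)/X(H)$ should emerge as a limit of ratios of independence numbers of products, hence be dominated by the supremum $\alpha^*(G|H)$.

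Concretely, for the left inequality $\alpha^*(G|H) \ge X(G)/X(H)$, I would run the telescoping argument already displayed in \eqref{eqnDefC2}--\eqref{eqnDefC4} but with $X$ in place of $\mathscr{C}$. The crucial inputs are: (a) $X(G \boxtimes W) \le X(G) X(W)$ and the reverse-type bound $X(H \boxtimes W) \ge$ (something controlling $\alpha$), and (b) the existence of a common maximizing graph $W$, which for the case $X = \alpha^*$ is exactly Lemma \ref{w} and its Corollary, giving $\alpha^*(G|H) \ge \alpha^*(G)/\alpha^*(H)$ directly. For the fractional independence number this is already \eqref{eqnNNn1}. For the remaining invariants (Lovász $\vartheta$ and the Schrijver/Szegedy variants), I would instead use their sandwiching property $\mathscr{C}(G) \le \vartheta(G)$ and their exact multiplicativity $\vartheta(G \boxtimes W) = \vartheta(G)\vartheta(W)$: writing $\frac{X(G)}{X(H)} = \frac{X(G\boxtimes W)}{X(H \boxtimes W)}$ for any $W$ by multiplicativity, and then choosing $W$ so that $X(H \boxtimes W)$ is close to $\alpha(H \boxtimes W)$ while $X(G \boxtimes W) \ge \alpha(G \boxtimes W)$, forces $\frac{X(G)}{X(H)} \le \sup_W \frac{\alpha(G\boxtimes W)}{\alpha(H \boxtimes W)} = \alpha^*(G|H)$. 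Here one needs that $X$ is squeezed between $\alpha$ and $\alpha^*$, i.e. $\alpha(K) \le X(K) \le \alpha^*(K)$, which holds for all the listed invariants.

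The right inequality $\frac{X(G)}{X(H)} \ge \frac{1}{\alpha^*(H|G)}$ then follows formally by applying the left inequality with the roles of $G$ and $H$ swapped: the left inequality gives $\alpha^*(H|G) \ge X(H)/X(G)$, and taking reciprocals yields $\frac{1}{\alpha^*(H|G)} \le \frac{X(G)}{X(H)}$, exactly as required. This symmetry reduces the whole theorem to the single inequality $\alpha^*(G|H) \ge X(G)/X(H)$.

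The main obstacle I anticipate is handling all the invariants uniformly. The cleanest path is to isolate the precise two properties each $X$ must satisfy---namely (1) $\alpha(K) \le X(K) \le \alpha^*(K)$ for every graph $K$, and (2) supermultiplicativity/submultiplicativity of $X$ under $\boxtimes$ of the correct flavor (so that $X(G\boxtimes W) \ge \alpha(G \boxtimes W)$ and $X(H \boxtimes W)$ can be made to approach $\alpha(H \boxtimes W)$ along a suitable sequence of $W$)---and then verify these two bullet facts case by case, invoking known results: Lovász's sandwich theorem $\alpha \le \vartheta$, the multiplicativity $\vartheta(G \boxtimes H) = \vartheta(G)\vartheta(H)$ (and analogous identities for the Schrijver $\vartheta'$ and Szegedy $\vartheta^+$ variants), and $\vartheta(K) \le \alpha^*(K)$. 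The delicate point is the choice of witnessing graph $W$ that simultaneously makes the $\alpha$-ratio large and the $X$-ratio tight; for $\vartheta$ this is trivial because of \emph{exact} multiplicativity, but for the independence and fractional independence numbers one instead leans on Lemma \ref{w} and the already-proven \eqref{eqnNNn1}, so the argument genuinely bifurcates into ``multiplicative invariants'' (Lovász-type, via exact product identities) and ``fractional/integral invariants'' (via the common-maximizer lemma).
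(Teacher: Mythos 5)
Your overall skeleton matches the paper's: reducing the right-hand inequality to the left-hand one by swapping $G$ and $H$ is exactly what the paper does, and the cases $X=\alpha$ (take $W$ a single vertex in \eqref{EqnDef}) and $X=\alpha^*$ (Lemma \ref{w} and \eqref{eqnNNn1}) are handled the same way. The gap is in the Lov\'asz-type cases, and it is twofold. First, your tightness requirement is oriented the wrong way. To deduce $\frac{X(G)}{X(H)}\le \sup_W\frac{\alpha(G\boxtimes W)}{\alpha(H\boxtimes W)}$ from $\frac{X(G)}{X(H)}=\frac{X(G\boxtimes W)}{X(H\boxtimes W)}$ you need the \emph{numerator} to collapse, i.e.\ a $W$ with $X(G\boxtimes W)$ close to $\alpha(G\boxtimes W)$, combined with the always-true $X(H\boxtimes W)\ge\alpha(H\boxtimes W)$ in the denominator. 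Your choice ($X(H\boxtimes W)$ close to $\alpha(H\boxtimes W)$ while $X(G\boxtimes W)\ge\alpha(G\boxtimes W)$) only yields $\frac{X(G)}{X(H)}\gtrsim\frac{\alpha(G\boxtimes W)}{\alpha(H\boxtimes W)}$ for that particular $W$, a lower bound on the $X$-ratio by one term of the supremum, which is not the claimed inequality. Second, even with the correct orientation, the existence of such a $W$ is the entire content of the step and you leave it as an assertion; it is precisely the non-trivial identity $\sup_W \alpha(G\boxtimes W)/\vartheta(W)=\vartheta(G)$ of \cite[Theorem 2]{Winter}, which the paper invokes in the form
$$\alpha^*(G|H)\ \ge\ \sup_W\frac{\alpha(G\boxtimes W)}{\vartheta(H)\vartheta(W)}\ =\ \frac{1}{\vartheta(H)}\sup_W\frac{\alpha(G\boxtimes W)}{\vartheta(W)}\ =\ \frac{\vartheta(G)}{\vartheta(H)},$$
after bounding the denominator by $\alpha(H\boxtimes W)\le\vartheta(H)\vartheta(W)$.

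A further problem is that your treatment of the Schrijver and Szegedy variants leans on ``analogous exact multiplicativity'' of $\vartheta^{-}$ and $\vartheta^{+}$ under $\boxtimes$, which is not available in general and cannot be taken for granted. The paper sidesteps this with the mixed bounds $\alpha(G\boxtimes W)\le\vartheta^{-}(G)\,\vartheta^{+}(W)$ and $\alpha(G\boxtimes W)\le\vartheta^{+}(G)\,\vartheta^{-}(W)$ together with $\sup_W\alpha(G\boxtimes W)/\vartheta^{+}(W)=\vartheta^{-}(G)$ and $\sup_W\alpha(G\boxtimes W)/\vartheta^{-}(W)=\vartheta^{+}(G)$ (Lemma 7 and Theorem 8 of \cite{Winter}), fed into the same one-line computation $\alpha^*(G|H)\ge\frac{1}{X(H)}\sup_W\frac{\alpha(G\boxtimes W)}{Y(W)}$. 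The fix is to restate your two ``bullet facts'' as: (a) $\alpha(G\boxtimes W)\le X(G)\,Y(W)$ for a suitable companion invariant $Y$, and (b) $\sup_W\alpha(G\boxtimes W)/Y(W)=X(G)$; with these the argument closes uniformly for all five invariants, and the sandwich $\alpha\le X\le\alpha^*$ by itself is not enough.
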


A proof of the above theorem is given in Section \ref{proofthm3}.
Theorem \ref{thm3} does not hold when $X(G)$ is the Haemers number of a graph $G$, $\mathcal H(G)$.  We do not know if Theorem \ref{thm3} holds when $X(G)$ is the fractional Haemers number of a graph $G$. 
In the following, we give two graphs $G$ and $H$, such that $\frac{\mathcal H(G)}{\mathcal H(H)}>\alpha^*(G|H)$. 
Let $G=C_7$ and $H=J(14,3)$.
Since for any graph $G$, $\mathcal H(G)$ is an integer number, and $ 3.2578 \leq \mathscr{C}(C_7)$ \cite{polak} and $\mathscr{C}(C_7) \leq \mathcal H(C_7)$, we conclude that $\mathcal H(C_7)\geq 4$. On the other hand, in Example \ref{examassig1} we prove that $\mathcal H(J(14,3))=14$ and $\alpha^*(C_7|J(14,3))\leq \frac{1}{4}$. This implies that $\frac{\mathcal H(C_7)}{\mathcal H(J(14,3))}\geq \frac{4}{14}>\frac{1}{4}\geq\alpha^*(C_7|J(14,3))$. So, the relative fractional independence number is not always an upper bound for the ratio of Haemers number of two given graphs.

\begin{corollary}\label{ncor2n} Using Remark \ref{remark2adn}, we conclude the following chain of inequalities for  Shannon capacity:
$$\alpha^*(G|H)\geq \sup_{T}\frac{\mathscr{C}(G\boxtimes T)}{\mathscr{C}(H\boxtimes T)} \geq \frac{\mathscr{C}(G)}{\mathscr{C}(H)}\geq \inf_{T}\frac{\mathscr{C}(G\boxtimes T)}{\mathscr{C}(H\boxtimes T)}\geq \frac{1}{\alpha^*(H|G)}.$$
Observe that for any two graphs $G$ and $H$ we have $\mathscr{C}(G\boxtimes T)\geq \mathscr{C}(G)\mathscr{C}(T)$ and the strict inequality may occur \cite{haemers1979some}. We do not know if the supremum over $T$ in
$$\sup_{T}\frac{\mathscr{C}(G\boxtimes T)}{\mathscr{C}(H\boxtimes T)}$$
may occur for some non-trivial graph $T$, and if so, for which graph pairs $G$ and $H$.

\end{corollary}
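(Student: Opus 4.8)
The plan is to reduce the entire four-term chain to two ingredients already in hand: Theorem~\ref{thm3} specialized to $X=\mathscr{C}$, and the monotonicity statement recorded in Remark~\ref{remark2adn}, namely $\alpha^*(G\boxtimes T\mid H\boxtimes T)\leq \alpha^*(G|H)$ for all graphs $G,H,T$. The two middle inequalities are essentially free. Taking $T$ to be the single-vertex graph $K_1$ gives $G\boxtimes K_1\cong G$ and $H\boxtimes K_1\cong H$, so the quotient $\mathscr{C}(G)/\mathscr{C}(H)$ is itself one of the expressions $\mathscr{C}(G\boxtimes T)/\mathscr{C}(H\boxtimes T)$; hence it lies between the infimum and the supremum taken over all $T$, which delivers both central inequalities at once.

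For the leftmost inequality I would fix an arbitrary graph $T$ and apply Theorem~\ref{thm3} to the pair $(G\boxtimes T,\,H\boxtimes T)$ with $X=\mathscr{C}$, obtaining
\[
\frac{\mathscr{C}(G\boxtimes T)}{\mathscr{C}(H\boxtimes T)}\;\leq\;\alpha^*\!\left(G\boxtimes T\mid H\boxtimes T\right).
\]
Remark~\ref{remark2adn} bounds the right-hand side by $\alpha^*(G|H)$, so $\mathscr{C}(G\boxtimes T)/\mathscr{C}(H\boxtimes T)\leq \alpha^*(G|H)$ holds for every $T$. Since this bound is uniform in $T$, taking the supremum over $T$ yields $\sup_T \mathscr{C}(G\boxtimes T)/\mathscr{C}(H\boxtimes T)\leq \alpha^*(G|H)$, which is the first link in the chain.

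The rightmost inequality is the mirror image. Fixing $T$ and again applying Theorem~\ref{thm3} to $(G\boxtimes T,\,H\boxtimes T)$, this time through its lower bound, gives
\[
\frac{\mathscr{C}(G\boxtimes T)}{\mathscr{C}(H\boxtimes T)}\;\geq\;\frac{1}{\alpha^*\!\left(H\boxtimes T\mid G\boxtimes T\right)}.
\]
Applying Remark~\ref{remark2adn} with the roles of $G$ and $H$ interchanged gives $\alpha^*(H\boxtimes T\mid G\boxtimes T)\leq \alpha^*(H|G)$, and since reciprocation reverses the inequality, $1/\alpha^*(H\boxtimes T\mid G\boxtimes T)\geq 1/\alpha^*(H|G)$. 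Thus $\mathscr{C}(G\boxtimes T)/\mathscr{C}(H\boxtimes T)\geq 1/\alpha^*(H|G)$ for every $T$, and taking the infimum over $T$ closes the chain.

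As for difficulty, there is no genuine obstacle here: the argument is pure bookkeeping once Theorem~\ref{thm3} and Remark~\ref{remark2adn} are available, and its entire content is that both bounds supplied by Theorem~\ref{thm3} are stable under multiplying by an auxiliary graph $T$. The only points that demand a moment of care are the direction-flip incurred when passing to reciprocals in the rightmost inequality, and the need to invoke Remark~\ref{remark2adn} with $(G,H)$ swapped at that step; one should also note the harmless convention $G\boxtimes K_1\cong G$ underlying the middle inequalities.
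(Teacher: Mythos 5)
Your argument is correct and is exactly the route the paper intends: apply the bound $\mathscr{C}(G')/\mathscr{C}(H')\leq \alpha^*(G'|H')$ to the pair $(G\boxtimes T, H\boxtimes T)$, invoke Remark~\ref{remark2adn} (and its mirror with $G,H$ swapped), and obtain the middle inequalities from the trivial choice $T=K_1$. The only cosmetic point is that the stated list in Theorem~\ref{thm3} does not literally include the Shannon capacity, so it is cleaner to cite the derivation in \eqref{eqnDefC2}--\eqref{eqnDefC4} (equivalently \eqref{eqnv31dNN}) for the fact that $\mathscr{C}(G')/\mathscr{C}(H')\leq\alpha^*(G'|H')$ and $\mathscr{C}(G')/\mathscr{C}(H')\geq 1/\alpha^*(H'|G')$.
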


\subsection{Computation of $\alpha^*(G|H)$ for a vertex-transitive graph $G$}

{
The following result is well-known (e.g., see \cite[Lemma 6]{esperet2022colouring}):
\begin{lemma}\label{lemma3ns}
    For any graph $G$, \[
\alpha^*(G)=\chi_f(G^c)\geq \frac{|\mathcal V(G)|}{\alpha(G^c)}
\]
where $\chi_f(G)$ is the fractional chromatic number of $G$. Moreover, equality holds for a vertex-transitive $G$. In other words, for a vertex-transitive graph $G$, $W=G^c$ is a maximizer of
$$\max_{W}\frac{\alpha(G\boxtimes W)}{\alpha(W)}$$
as $\alpha(G\boxtimes G^c)=|\mathcal V(G)|$ for a vertex-transitive graph $G$ 
\cite[Corollary 1(b)]{SONNEMANN1974133}.
\end{lemma}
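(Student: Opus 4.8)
The plan is to decompose the statement into three standard pieces and then assemble them. First I would establish the identity $\alpha^*(G)=\chi_f(G^c)$ by linear programming duality. Recall from \eqref{eqnpor1} that $\alpha^*(G)$ is the value of the packing program $\max\sum_v w_v$ over $w_v\ge 0$ subject to $\sum_{v\in \mathcal S}w_v\le 1$ for every clique $\mathcal S$ of $G$. Since the cliques of $G$ are exactly the independent sets of $G^c$, these are precisely the constraints of the packing linear program whose dual is the fractional covering program defining $\chi_f(G^c)$ (minimize the total weight placed on independent sets of $G^c$ subject to covering every vertex with total weight at least $1$). Strong LP duality then yields $\alpha^*(G)=\chi_f(G^c)$, as already noted in the text following \eqref{eqnpor1}.

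Second, I would prove the inequality $\chi_f(G^c)\ge |\mathcal V(G)|/\alpha(G^c)$ by exhibiting a single feasible point of the packing program. Assign the uniform weight $w_v=1/\alpha(G^c)$ to every vertex $v$. For any clique $\mathcal S$ of $G$ (equivalently, any independent set of $G^c$) we have $|\mathcal S|\le \alpha(G^c)$, so $\sum_{v\in\mathcal S}w_v=|\mathcal S|/\alpha(G^c)\le 1$ and the point is feasible. Its objective value is $|\mathcal V(G)|/\alpha(G^c)$, which is therefore a lower bound on $\alpha^*(G)=\chi_f(G^c)$.

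The main step is the reverse inequality in the vertex-transitive case, which I would handle by a symmetrization argument. Since $G$ is vertex-transitive, so is $G^c$; let $\Gamma=\mathrm{Aut}(G^c)$, which acts transitively on $\mathcal V(G^c)=\mathcal V(G)$. Fix a maximum independent set $\mathcal I_0$ of $G^c$ with $|\mathcal I_0|=\alpha(G^c)$, and consider the family $\{\sigma(\mathcal I_0):\sigma\in\Gamma\}$ of independent sets (counted with multiplicity). The double-counting identity $\sum_{\sigma\in\Gamma}|\sigma(\mathcal I_0)|=|\Gamma|\,\alpha(G^c)$ holds since automorphisms preserve independence and size, and transitivity (via the bijection $\sigma\mapsto\tau\sigma$ for any $\tau$ sending one vertex to another) forces each vertex to lie in the same number $|\Gamma|\,\alpha(G^c)/|\mathcal V(G)|$ of these sets. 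Placing weight $|\mathcal V(G)|/(|\Gamma|\,\alpha(G^c))$ on each $\sigma(\mathcal I_0)$ then covers every vertex with total weight exactly $1$, giving a feasible fractional coloring of $G^c$ of total weight $|\mathcal V(G)|/\alpha(G^c)$. Hence $\chi_f(G^c)\le |\mathcal V(G)|/\alpha(G^c)$, and combined with the previous step we obtain equality. The only subtle point is verifying that transitivity makes the covering uniform across vertices; everything else is bookkeeping.

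Finally, I would translate the equality back into the $\sup_W$ formulation. By Hales' theorem (the paragraph following \eqref{eqnNNfN}), $\alpha^*(G)=\max_W \alpha(G\boxtimes W)/\alpha(W)$. Taking $W=G^c$ and using the cited fact $\alpha(G\boxtimes G^c)=|\mathcal V(G)|$ for vertex-transitive $G$ \cite[Corollary 1(b)]{SONNEMANN1974133}, we get $\alpha(G\boxtimes G^c)/\alpha(G^c)=|\mathcal V(G)|/\alpha(G^c)=\alpha^*(G)$, so $W=G^c$ attains the maximum, as claimed.
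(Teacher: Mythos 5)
Your proof is correct. Note that the paper does not prove this lemma at all --- it cites it as well-known (referring to \cite[Lemma 6]{esperet2022colouring} and \cite[Corollary 1(b)]{SONNEMANN1974133}) --- so there is no in-paper argument to compare against; what you have written is the standard proof underlying those references: LP duality identifying $\alpha^*(G)$ with $\chi_f(G^c)$, the uniform-weight feasible point for the lower bound, and the orbit-averaging of a maximum independent set under $\mathrm{Aut}(G^c)$ (which equals $\mathrm{Aut}(G)$) for the matching upper bound in the vertex-transitive case. All steps, including the double-counting that makes the fractional cover uniform, check out.
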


Observe that for any arbitrary graph $G$, using the special choice of $W=G^c$ we obtain the lower bound
\begin{align}\alpha^*(G|H)=\max_{W}\frac{\alpha(G\boxtimes W)}{\alpha(H\boxtimes W)}\geq \frac{\alpha(G\boxtimes G^c)}{\alpha(H\boxtimes G^c)}\geq \frac{|\mathcal V(G)|}{\alpha(H\boxtimes G^c)}\end{align}
where  $\alpha(G^c \boxtimes G) \geq |\mathcal{V}(G)|$ follows from the fact that the set $\{(v,v): v\in\mathcal{G}\}$ is an independent set in $G^c \boxtimes G$. We provide an extension of the Lemma \ref{lemma3ns} for a vertex-transitive graph $G$ as follows:
}

\begin{theorem}
\label{alphavertran}
For any graphs $G$ and $H$, where $\mathcal V(G)=\{v_1,\dots, v_k\}$, we have
\begin{align}\alpha^*(G|H)\geq \frac{|\mathcal V(G)|}{\alpha(G^c\boxtimes H)},\label{aseqrq}\end{align}
Moreover, the lower bound in \eqref{aseqrq} is tight (holds with equality) if $G$ is vertex-transitive. In other words, for a vertex-transitive $G$, the graph
    $W=G^{c}$ is a maximizer 
    $$\alpha^*(G|H)=\max_W\frac{\alpha(G\boxtimes W)}{\alpha(H\boxtimes W)}.$$
Furthermore, for any graph $G$ on $k$ vertices $\mathcal V(G)=\{v_1,\dots, v_k\}$, we have
$\alpha(G^c\boxtimes H)=\max(\sum_{i=1}^k  |\mathcal T_i|)$
where the maximum is over all collection of sets $\mathcal T_1, \cdots, \mathcal T_k\in \mathcal I(H)$ such that $\mathcal T_i$ and $\mathcal T_j$ are disconnected in $H$ if there is no edge between $v_i$ and $v_j$ in $G$.



\end{theorem}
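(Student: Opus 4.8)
\textbf{Proof proposal for Theorem \ref{alphavertran}.}

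The plan is to prove the three assertions in sequence: the lower bound \eqref{aseqrq}, its tightness for vertex-transitive $G$, and the combinatorial formula for $\alpha(G^c\boxtimes H)$. I would begin with the last, purely combinatorial, identity since it clarifies the structure underlying the other two. To establish $\alpha(G^c\boxtimes H)=\max\sum_{i=1}^k|\mathcal T_i|$, I would set up a bijection between independent sets of $G^c\boxtimes H$ and collections $(\mathcal T_1,\dots,\mathcal T_k)$ with $\mathcal T_i\in\mathcal I(H)$ and the stated disconnectedness condition. Given an independent set $\mathcal B\subseteq\mathcal V(G^c\boxtimes H)$, for each vertex $v_i\in\mathcal V(G)$ define $\mathcal T_i=\{h\in\mathcal V(H):(v_i,h)\in\mathcal B\}$. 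The key is to check that $\mathcal B$ being independent in $G^c\boxtimes H$ translates exactly into: each $\mathcal T_i$ is independent in $H$, and whenever $v_iv_j\notin\mathcal E(G)$ (so $v_iv_j\in\mathcal E(G^c)$ for $i\neq j$), the sets $\mathcal T_i,\mathcal T_j$ must be disconnected in $H$. This follows by unwinding the strong-product adjacency rule applied to $G^c$: two vertices $(v_i,h)$ and $(v_j,h')$ are non-adjacent in $G^c\boxtimes H$ exactly when the corresponding condition on $G^c$-adjacency and $H$-adjacency fails, which is precisely the ``disconnected'' requirement of Definition \ref{def2sep}. Since $|\mathcal B|=\sum_i|\mathcal T_i|$, maximizing over independent sets $\mathcal B$ yields the formula. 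I expect this bookkeeping to be routine but the main place where care is needed is the self-loop subtlety at $i=j$ (the condition degenerates to $\mathcal T_i$ being independent), and the correct handling of $v_i=v_j$ versus distinct vertices.

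For the lower bound \eqref{aseqrq}, I would simply specialize the chain already displayed in the excerpt just before the theorem statement: taking $W=G^c$ gives
\[
\alpha^*(G|H)=\max_W\frac{\alpha(G\boxtimes W)}{\alpha(H\boxtimes W)}\geq\frac{\alpha(G\boxtimes G^c)}{\alpha(H\boxtimes G^c)}\geq\frac{|\mathcal V(G)|}{\alpha(G^c\boxtimes H)},
\]
where $\alpha(G\boxtimes G^c)\geq|\mathcal V(G)|$ is witnessed by the diagonal $\{(v,v):v\in\mathcal V(G)\}$, which is independent in $G\boxtimes G^c$. This step is immediate and requires no new idea.

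The crux is the tightness claim for vertex-transitive $G$, and this is where I expect the main obstacle. I would prove $\alpha^*(G|H)\leq|\mathcal V(G)|/\alpha(G^c\boxtimes H)$ by bounding $\alpha(G\boxtimes W)/\alpha(H\boxtimes W)$ for an arbitrary $W$. The natural route is to invoke the structure of the first proof sketched after Theorem \ref{thm1}, together with the vertex-transitivity of $G$ (and hence of $G^c$, by the cited \cite{SONNEMANN1974133}). Concretely, using that for vertex-transitive $G$ one has the exact evaluation $\alpha(G\boxtimes G^c)=|\mathcal V(G)|$ \cite[Corollary 1(b)]{SONNEMANN1974133} and the characterization $\alpha^*(G)=\chi_f(G^c)=|\mathcal V(G)|/\alpha(G^c)$ from Lemma \ref{lemma3ns}, I would argue that the optimizing $W$ in $\alpha^*(G|H)$ can be taken to be $G^c$. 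One clean way is to combine the linear-program characterization of Theorem \ref{thm1}(i) with a symmetrization argument: vertex-transitivity lets us average any feasible weight vector $\mathbf w$ over the automorphism group of $G$ to obtain the uniform weight $w_i=\alpha^*(G|H)/|\mathcal V(G)|$ without decreasing the objective, and the resulting uniform constraint $\sum_i w_i|\mathcal T_i|\leq 1$ reads $\max\sum_i|\mathcal T_i|\leq|\mathcal V(G)|/\alpha^*(G|H)$, which is exactly $\alpha(G^c\boxtimes H)\leq|\mathcal V(G)|/\alpha^*(G|H)$ by the combinatorial formula just proved. Rearranging gives the matching upper bound. The delicate point, and the step I would scrutinize most carefully, is verifying that the averaged (uniform) weight vector remains feasible for \emph{all} $f\in\mathcal F(H,G)$ simultaneously — this uses that the automorphism group acts on the constraint family $\mathcal F(H,G)$ by permuting the sets $\mathcal T_i$, so that feasibility is preserved under group-averaging by convexity of the feasible polytope.
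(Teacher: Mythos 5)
Your proposal is correct and follows essentially the same route as the paper: the identity $\alpha(G^c\boxtimes H)=\max\sum_i|\mathcal T_i|$ via the correspondence between independent sets of $G^c\boxtimes H$ and valid collections, the lower bound by specializing $W=G^c$ (equivalently, restricting the LP of Theorem \ref{thm1}(i) to uniform weights), and tightness by averaging the LP constraints over the automorphism group of a vertex-transitive $G$. The ``delicate point'' you flag is exactly what the paper checks, namely that transitivity makes the group-average of any feasible weight vector the uniform vector (the paper verifies this by showing $|\Pi_{i,j}|$ is independent of $i,j$), after which the uniform constraint $\bar w\sum_i|\mathcal T_i|\le 1$ gives the matching upper bound.
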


A proof of the above theorem is given in Section \ref{proofthmalphavertran}.

\begin{corollary}
    { 
If $G$ is vertex-transitive, we have
    \begin{align}\alpha^*(G|H)=\frac{|\mathcal V(G)|}{\alpha(G^c\boxtimes H)}
    =\frac{|\mathcal V(G)|}{|\mathcal V(H)|}\cdot\frac{|\mathcal V(H)|}{\alpha(G^c\boxtimes H)}\leq \frac{|\mathcal V(G)|}{|\mathcal V(H)|}\cdot\alpha^*(H^c|G^c).\label{eqnfEE2}
    \end{align}
    Equality holds in \eqref{eqnfEE2} if both $G$ and $H$ are vertex-transitive. In Theorem \ref{cayex} we compute $\alpha^*(G|H)$ for certain Cayley graphs that are vertex-transitive. The above observation implies that we can also obtain the relative fractional independence number for their compliments. If both $G$ and $H$ are vertex-transitive graphs, 
    $G^c\boxtimes H$ will also be vertex-transitive and we can write
\[
\alpha^*(G|H
)= \frac{|V(G)|}{ \alpha(G^c\boxtimes H)}
= \frac{\chi_f(G^c\boxtimes H)}{ |\mathcal{V}(H)|}
\]
where we used Theorem \ref{alphavertran} and Lemma \ref{lemma3ns}. 
    
}
\end{corollary}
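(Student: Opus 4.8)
The plan is to verify the displayed chain \eqref{eqnfEE2}, and then the two equality assertions, one link at a time, drawing only on Theorem \ref{alphavertran}, Lemma \ref{lemma3ns}, and the elementary closure facts about complements and strong products of vertex-transitive graphs recorded in the preliminaries. Since $G$ is vertex-transitive, the opening equality $\alpha^*(G|H)=|\mathcal V(G)|/\alpha(G^c\boxtimes H)$ is exactly the tightness clause of Theorem \ref{alphavertran}, and the second equality is the trivial insertion of $|\mathcal V(H)|/|\mathcal V(H)|$. Thus the only genuine content of the inequality in \eqref{eqnfEE2} is the claim
\[
\frac{|\mathcal V(H)|}{\alpha(G^c\boxtimes H)}\leq \alpha^*(H^c\,|\,G^c).
\]

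I would establish this directly from the defining supremum $\alpha^*(H^c|G^c)=\sup_{W}\alpha(H^c\boxtimes W)/\alpha(G^c\boxtimes W)$ by making the single choice $W=H$, which gives $\alpha^*(H^c|G^c)\geq \alpha(H^c\boxtimes H)/\alpha(G^c\boxtimes H)$. It then remains to invoke the diagonal bound $\alpha(H^c\boxtimes H)\geq |\mathcal V(H)|$, which holds because $\{(v,v):v\in\mathcal V(H)\}$ is independent in $H^c\boxtimes H$ (no pair $(u,u),(v,v)$ with $u\neq v$ can be adjacent, as $uv$ cannot lie in both $\mathcal E(H)$ and $\mathcal E(H^c)$); this is the same observation stated just before Theorem \ref{alphavertran}, applied with $G$ replaced by $H$. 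Combining the two displays yields the inequality.

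For the equality case when $H$ is also vertex-transitive, I would use that $H^c$ is then vertex-transitive and apply the tightness clause of Theorem \ref{alphavertran} with $G$ replaced by $H^c$. Using $(H^c)^c=H$, the identity $|\mathcal V(H^c)|=|\mathcal V(H)|$, and commutativity of $\boxtimes$, this produces $\alpha^*(H^c|G^c)=|\mathcal V(H^c)|/\alpha((H^c)^c\boxtimes G^c)=|\mathcal V(H)|/\alpha(G^c\boxtimes H)$. Substituting this into the right-hand side of \eqref{eqnfEE2} collapses it to $|\mathcal V(G)|/\alpha(G^c\boxtimes H)=\alpha^*(G|H)$, so the inequality is saturated, exactly as claimed.

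Finally, for the closing identity I would note that when both $G$ and $H$ are vertex-transitive, the preliminaries guarantee that $G^c$ is vertex-transitive and that the strong product of vertex-transitive graphs is vertex-transitive, so $M:=G^c\boxtimes H$ is vertex-transitive with $|\mathcal V(M)|=|\mathcal V(G)|\cdot|\mathcal V(H)|$. Lemma \ref{lemma3ns}, read through $\alpha^*(M^c)=\chi_f(M)$ together with its vertex-transitive tightness $\alpha^*(M^c)=|\mathcal V(M^c)|/\alpha((M^c)^c)=|\mathcal V(M)|/\alpha(M)$, gives $\chi_f(M)=|\mathcal V(M)|/\alpha(M)$; dividing by $|\mathcal V(H)|$ then yields $\chi_f(G^c\boxtimes H)/|\mathcal V(H)|=|\mathcal V(G)|/\alpha(G^c\boxtimes H)=\alpha^*(G|H)$. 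There is no deep obstacle here: every step is a direct invocation of a prior result, and the only thing demanding care is the bookkeeping of complements—in particular applying Theorem \ref{alphavertran} in the ``reversed'' form to $H^c$ rather than to $G$, and citing the correct vertex-transitivity closure facts for $G^c$ and $G^c\boxtimes H$—so I would take pains to track which graph plays the role of the vertex-transitive argument at each stage.
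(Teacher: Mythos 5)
Your proposal is correct and follows essentially the same route the paper takes: the first equality and the inequality are the tightness clause and the general lower bound of Theorem \ref{alphavertran} (your choice $W=H$ together with the diagonal independent set is precisely how the paper proves that lower bound, here instantiated for the pair $(H^c,G^c)$), the equality case applies the same tightness to the vertex-transitive graph $H^c$, and the closing identity invokes Lemma \ref{lemma3ns} exactly as the corollary's citation indicates. The only point worth making explicit is that the tightness of Lemma \ref{lemma3ns} is applied to $M^c=(G^c\boxtimes H)^c$, whose vertex-transitivity follows from that of $M$ by the complement-closure fact you already cite.
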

{
\begin{corollary}
    If both $G$ and $H$ are vertex-transitive,
    we have
    $$\alpha^*(G\star H)\geq \alpha(G\boxtimes H)$$
where $G\star H$ is the disjunctive product (also known as the co-normal product or OR product) between two graphs $G$ and $H$, defined as follows: we connect $(u,v)\in\mathcal{V}(G)\times \mathcal{V}(H)$ and $(u',v')\in\mathcal{V}(G)\times \mathcal{V}(H)$ if $uu'\in \mathcal{E}(G)$ or $vv'\in\mathcal{E}(H)$.
\end{corollary}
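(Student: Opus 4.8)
The plan is to reduce the claim to the relative fractional independence number by exploiting the complementation identity linking the disjunctive and strong products. First I would record two structural facts about $G\star H$. The one I would verify is the identity $(G\star H)^c=G^c\boxtimes H^c$: two distinct vertices $(u,v),(u',v')$ are non-adjacent in $G\star H$ exactly when $uu'\notin\mathcal E(G)$ and $vv'\notin\mathcal E(H)$, which (recalling that $u=u'$ forces $uu'\notin\mathcal E(G)$ and similarly for $H$) is precisely the adjacency rule of $G^c\boxtimes H^c$. I would also note that $G\star H$ is vertex-transitive whenever $G$ and $H$ are, since each pair $(\sigma,\tau)$ with $\sigma\in\mathrm{Aut}(G)$, $\tau\in\mathrm{Aut}(H)$ induces an automorphism of $G\star H$, and these act transitively on $\mathcal V(G)\times\mathcal V(H)$.

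Next, using vertex-transitivity of $G\star H$ together with Lemma \ref{lemma3ns}, I would write
\[
\alpha^*(G\star H)=\frac{|\mathcal V(G\star H)|}{\alpha((G\star H)^c)}=\frac{|\mathcal V(G)|\,|\mathcal V(H)|}{\alpha(G^c\boxtimes H^c)}.
\]
The key step is then to recognize the denominator through Theorem \ref{alphavertran}: since $G$ is vertex-transitive, the tight form of \eqref{aseqrq} with the second factor taken to be $H^c$ gives $\alpha^*(G|H^c)=|\mathcal V(G)|/\alpha(G^c\boxtimes H^c)$. Substituting this into the display yields the clean identity $\alpha^*(G\star H)=|\mathcal V(H)|\cdot\alpha^*(G|H^c)$.

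Finally, I would lower-bound $\alpha^*(G|H^c)$ by evaluating its defining supremum \eqref{EqnDef} at the single graph $W=H$:
\[
\alpha^*(G|H^c)=\sup_{W}\frac{\alpha(G\boxtimes W)}{\alpha(H^c\boxtimes W)}\geq\frac{\alpha(G\boxtimes H)}{\alpha(H^c\boxtimes H)}=\frac{\alpha(G\boxtimes H)}{|\mathcal V(H)|},
\]
where the last equality uses $\alpha(H\boxtimes H^c)=|\mathcal V(H)|$ for vertex-transitive $H$ (the fact from \cite{SONNEMANN1974133} already invoked in Lemma \ref{lemma3ns}). Combining with the identity of the previous paragraph gives $\alpha^*(G\star H)=|\mathcal V(H)|\cdot\alpha^*(G|H^c)\geq\alpha(G\boxtimes H)$, which is the assertion.

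The main obstacle is really just the choice of route rather than any hard estimate: one must see that $\alpha^*(G\star H)$ should be passed through its complement $G^c\boxtimes H^c$ and then matched against $\alpha^*(G|H^c)$, after which the lucky evaluation $W=H$ (making the denominator collapse to $|\mathcal V(H)|$) closes the argument in one line. Everything else — the product-complement identity, vertex-transitivity of $G\star H$, and the application of Theorem \ref{alphavertran} — is routine verification once this path is identified.
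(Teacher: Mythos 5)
Your proof is correct and follows essentially the same route as the paper: both pass through the identity $(G\star H)^c=G^c\boxtimes H^c$, use vertex-transitivity to write $\alpha^*(G\star H)=|\mathcal V(G)||\mathcal V(H)|/\alpha(G^c\boxtimes H^c)$, and reduce everything to the inequality $\alpha(G\boxtimes H)\,\alpha(G^c\boxtimes H^c)\leq|\mathcal V(G)|\,|\mathcal V(H)|$. The only difference is packaging: the paper obtains that inequality from $\alpha^*(G^c|H)\,\alpha^*(H|G^c)\geq 1$ together with two applications of Theorem \ref{alphavertran}, whereas you obtain it by evaluating the supremum defining $\alpha^*(G|H^c)$ at $W=H$ and using $\alpha(H\boxtimes H^c)=|\mathcal V(H)|$ — the same underlying facts in a slightly different order.
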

This inequality may be new as we have not seen it reported elsewhere. 
\begin{proof}

Consider the inequality $\alpha^*(G^c|H) \alpha^*(H|G^c) \geq 1$ which implies that
$$\alpha(G\boxtimes H)\alpha(H^c\boxtimes G^c)\leq |\mathcal{V}(G)||\mathcal{V}(H)|.$$
Since
$$\alpha^*((G^c\boxtimes H^c)^c)=\frac{|\mathcal{V}(G)||\mathcal{V}(H)|}{\alpha(G^c\boxtimes H^c)}$$
we deduce that
$$\alpha^*((G^c\boxtimes H^c)^c)\geq \alpha(G\boxtimes H)$$
Observe that
$(G^c\boxtimes H^c)^c=G\star H$. This completes the proof.

\end{proof}

Observe that $G\boxtimes H$ is a subgraph of $G\star H$. Thus, the inequality in the above corollary immediately implies the following chain of inequalities for arbitrary vertex-transitive graphs $G$ and $H$:
$$\alpha^*(G)\alpha^*(H)=\alpha^*(G\boxtimes H)\geq \alpha^*(G\star H)\geq \alpha(G\boxtimes H)\geq \alpha(G\star H)=\alpha(G)\alpha(H).$$

\begin{remark} Theorem \ref{alphavertran} allows us to use known results to obtain $\alpha^*(G|H)$ for certain special graphs. For instance, it is known \cite[Theorem 1]{SONNEMANN1974133}\cite{hales1973numerical} that for $j\geq k$ we have
$\alpha(C_{2j+1}\boxtimes C_{2k+1})=jk+\lfloor \frac{k}{2}\rfloor$ where $C_k$ is a cycle graph of size $k$. Since a cycle graph is vertex-transitive, Theorem \ref{alphavertran} allows us to compute the relative independence number between the complement of an odd cycle with respect to another odd cycle. 
More results on the independence of the strong product of three odd cycles are reported in  \cite{VESEL19989} and can be used to compute the relative fractional independence numbers for other pairs of graphs. Similar results are available for other graphs. For instance, it is proved in \cite{bad}  that 
$$\alpha(Cay(\mathbb{Z}_n, \pm 1,\pm 2,\dots \pm k)\boxtimes Cay(\mathbb{Z}_n, \pm 1,\pm 2,\dots \pm k))=\left\lfloor \frac{\left\lfloor \frac{n}{k+1}\right\rfloor n}{k+1}\right\rfloor.$$
We also remark that some of the known results in the literature follow immediately from our results. For instance, Lemma 2 in \cite{vesz} states that $2\alpha(G\boxtimes C_{2k+1})\leq (2k+1)\alpha(G)$ where $C_{2k+1}$ is a cycle of size $2k+1$. Our proof of this fact is as follows: 
\[
\frac{\alpha(C^c_{2k+1})}{\alpha(G)}\leq
\alpha^*(C^c_{2k+1}|G)=\frac{2k+1}{\alpha(C_{2k+1}\boxtimes G)}.
\]
As another example, Theorem 6 in \cite{vesz} states that
$\chi(P_5\boxtimes C_{2k+1})>5$ where $P_5$ is the Petersen graph and where $\chi(\cdot)$ is the coloring number of a graph. Our proof of this fact is as follows:
\[
\frac{\vartheta(C^c_{2k+1})}{\vartheta(P_5)}\leq \alpha^*(C^c_{2k+1}|P_5)=\frac{\chi_f(P_5\boxtimes C_{2k+1})}{10}\leq \frac{\chi(P_5\boxtimes C_{2k+1})}{10}.
\]
where $\vartheta(G)$ is the Lov\'{a}sz number of $G$.
Note that
$\vartheta(C^c_{2k+1})>2$ 
 since using the fact that the cycle graph is vertex-transitive we have
$$
\vartheta(C^c_{2k+1})=\frac{2k+1}{\vartheta(C_{2k+1})}>2.$$
Next, note that $\vartheta(P_5)=\alpha(P_5)=4$
since $P_5$ is a Kneser graph.
Thus, we obtain the desired inequality. As another example, Proposition 2.3 in \cite{jurk} states that for positive integers $k,n,p$ such that $k<n$ we have 
$$\alpha(Cay(\mathbb{Z}_n, \pm 1,\pm 2,\dots \pm k)^{\boxtimes p})\leq \left\lfloor \frac{n}{k+1}\alpha(Cay(\mathbb{Z}_n, \pm 1,\pm 2,\dots \pm k)^{\boxtimes (p-1)})  \right\rfloor$$
where $G^{\boxtimes p}$ is the strong product of $G$ with itself $p$ times. Our proof of this fact is as follows:
\begin{align*}\frac{\alpha(Cay(\mathbb{Z}_n, \pm 1,\pm 2,\dots \pm k)^c)}{\alpha(Cay(\mathbb{Z}_n, \pm 1,\pm 2,\dots \pm k)^{\boxtimes (p-1)})}&\leq \alpha^*(Cay(\mathbb{Z}_n, \pm 1,\pm 2,\dots \pm k)^c|Cay(\mathbb{Z}_n, \pm 1,\pm 2,\dots \pm k)^{\boxtimes (p-1)})\\&=\frac{n}{\alpha(Cay(\mathbb{Z}_n, \pm 1,\pm 2,\dots \pm k)\boxtimes Cay(\mathbb{Z}_n, \pm 1,\pm 2,\dots \pm k)^{\boxtimes (p-1)})}.\end{align*}
The desired inequality follows from   $\alpha(Cay(\mathbb{Z}_n, \pm 1,\pm 2,\dots \pm k)^c)\geq k+1$.


\end{remark}
}
\begin{remark}
Assume that $G$ is a single vertex, and $H$ is an arbitrary graph. By Theorem \ref{alphavertran}, we have 
\[
\alpha^*(G|H)=\frac{1}{\alpha(H)}
\]
So, in this case, computing $\alpha^*(G|H)$ is equivalent to computing $\alpha(H)$. Therefore, since computing $\alpha(H)$ is an NP-hard problem, so is computing $\alpha^*(G|H)$. Additionally, because of the hardness of the approximation of $\alpha(H)$, no efficient upper bound for $\alpha^*(G|H)$ can be found in the general case.
\end{remark}

{
We proceed with another application of Theorem \ref{alphavertran}. Note that for any graph $G$, the independence graph $G^*$ is defined in Definition \ref{defG*}. For any vertex $v\in G^*$, we can assign a number $N_v$ which is the size of the independent set in $G$ associated with the vertex $v\in G^*$.
\begin{theorem}\label{thm5na}
    For any graph $G$, if $H$ is an induced subgraph of $G^*$ then
$$\alpha^*(H|G)\leq \left(\min_{v\in \mathcal V(H)}N_v\right)^{-1}.$$
Moreover, if $H$ is an induced subgraph of $G^*$ that is also vertex-transitive, then 
$$\alpha^*(H|G)\leq \frac{|\mathcal{V}(H)|}{\sum_{v\in\mathcal{V}(H)}N_v}.$$
\end{theorem}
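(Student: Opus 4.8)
The plan is to reduce both inequalities to a single structural observation: because $H$ is an \emph{induced} subgraph of $G^*$, there is a canonical element of $\mathcal F(G,H)$ whose set-sizes are exactly the numbers $N_v$. Concretely, every vertex $v\in\mathcal V(H)\subseteq\mathcal V(G^*)=\mathcal I(G)$ \emph{is} an independent set $\mathcal S_v\in\mathcal I(G)$ with $|\mathcal S_v|=N_v$, so I would set $f^\star(v)=\mathcal S_v$. The key point is that if $v,w$ are nonadjacent in $H$ then, since $H$ is induced in $G^*$, they are nonadjacent in $G^*$, which by Definition~\ref{defG*} means $\mathcal S_v$ and $\mathcal S_w$ are disconnected in $G$. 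Thus $f^\star\in\mathcal F(G,H)$ and $|f^\star(v)|=N_v$ for all $v$. Establishing this membership is the conceptual core; the remaining steps are short deductions from results already proved.

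For the first bound I would use the linear-programming form in part~(i) of Theorem~\ref{thm1}, with the two graphs interchanged, so that $\alpha^*(H|G)=\max_{\mathbf w}\sum_v w_v$ subject to $w_v\ge0$ and $\sum_v w_v|f(v)|\le1$ for every $f\in\mathcal F(G,H)$. Keeping only the single constraint coming from $f^\star$, namely $\sum_v w_v N_v\le1$, enlarges the feasible set and hence can only raise the optimum. For any $\mathbf w$ feasible in this relaxation one has $\sum_v w_v\le(\min_v N_v)^{-1}\sum_v w_v N_v\le(\min_v N_v)^{-1}$, with equality when all weight sits on a vertex minimizing $N_v$. Therefore $\alpha^*(H|G)\le(\min_{v\in\mathcal V(H)}N_v)^{-1}$.

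For the second bound the extra hypothesis that $H$ is vertex-transitive lets me invoke Theorem~\ref{alphavertran} with equality, giving $\alpha^*(H|G)=|\mathcal V(H)|/\alpha(H^c\boxtimes G)$, so it suffices to bound the denominator below. Applying the final identity of Theorem~\ref{alphavertran} (with the roles of the two graphs swapped) expresses $\alpha(H^c\boxtimes G)$ as the maximum of $\sum_{v}|\mathcal T_v|$ over all families $(\mathcal T_v)_{v}$ in $\mathcal I(G)$ that are pairwise disconnected across nonadjacent vertices of $H$ --- exactly the families parametrized by $\mathcal F(G,H)$. Substituting the canonical family $f^\star$ yields $\alpha(H^c\boxtimes G)\ge\sum_v N_v$, whence $\alpha^*(H|G)\le|\mathcal V(H)|/\sum_v N_v$.

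The one genuinely delicate point, beyond routine bookkeeping, is the verification that $f^\star\in\mathcal F(G,H)$: this is precisely where the \emph{induced}-subgraph hypothesis is indispensable, since for a non-induced subgraph nonadjacency in $H$ need not force disconnectedness in $G$ and the canonical assignment could fail to be admissible. I would also remark that when $\min_v N_v=0$ (i.e.\ the isolated vertex $\emptyset\in\mathcal I(G)$ belongs to $H$) the first inequality holds trivially, its right-hand side being $+\infty$, while the second requires no special treatment.
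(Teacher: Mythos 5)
Your proof is correct. The crux --- that the tautological assignment $f^\star(v)=\mathcal S_v$ lies in $\mathcal F(G,H)$ precisely because $H$ is an \emph{induced} subgraph of $G^*$, so nonadjacency in $H$ forces disconnectedness of the corresponding independent sets in $G$ --- is verified properly, and both bounds follow as you describe. The paper itself disposes of the theorem in one line by citing the corrected form of Lemma 4 of Sonnemann, which supplies exactly the two facts you re-derive: the inequality $\alpha(G\boxtimes W)\ge \alpha(H\boxtimes W)\min_{v}N_v$ for every $W$ (applied to the sup-of-ratios definition of $\alpha^*(H|G)$ this gives the first bound), and the lower bound $\alpha(G\boxtimes H^c)\ge\sum_{v}N_v$ (combined with Theorem~\ref{alphavertran} this gives the second). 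Your route differs in that it stays entirely inside the paper's own machinery: for the first bound you invoke the linear-programming characterization of Theorem~\ref{thm1}(i) with the roles of the two graphs interchanged and discard every constraint except the one induced by $f^\star$, and for the second you obtain $\alpha(H^c\boxtimes G)\ge\sum_{v}N_v$ from the final assertion of Theorem~\ref{alphavertran} applied to the family $(\mathcal S_v)_{v}$. This is arguably preferable: it needs only the easy direction of Sonnemann's product formula rather than the full equality, it makes explicit where the induced-subgraph hypothesis is used, and your remark on the degenerate case $\min_{v}N_v=0$ (when the isolated vertex $\emptyset\in\mathcal I(G)$ belongs to $H$) covers a corner the paper does not spell out.
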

\begin{example}\label{example1d} 
{
Let $G=Cay(\mathbb{Z}_n, \pm 1,\pm 2,\cdots,\pm k)$ for some $n\geq 4k+3$. Consider
independent sets of the form $S_i=\{i,i+k+1\}$ for $1\leq i\leq n$. The induced subgraph in $G^*$ on these independent sets is $H=Cay(\mathbb{Z}_n, \pm 1,\pm 2,\cdots,\pm (2k+1))$, yielding  
$$\alpha^*\big(Cay(\mathbb{Z}_n, \pm 1,\pm 2,\cdots,\pm (2k+1))|Cay(\mathbb{Z}_n, \pm 1,\pm 2,\cdots,\pm k)\big)\leq \frac{1}{2}.$$
On the other hand, 
$$\alpha^*\big(Cay(\mathbb{Z}_n, \pm 1,\cdots,\pm (2k+1))|Cay(\mathbb{Z}_n, \pm 1,\cdots,\pm k)\big)\geq \frac{\alpha^*(Cay(\mathbb{Z}_n, \pm 1,\pm 2,\cdots,\pm (2k+1))}{\alpha^*(Cay(\mathbb{Z}_n, \pm 1,\pm 2,\cdots,\pm k))}=\frac12.$$
Thus,
$$\alpha^*\big(Cay(\mathbb{Z}_n, \pm 1,\pm 2,\cdots,\pm (2k+1))|Cay(\mathbb{Z}_n, \pm 1,\pm 2,\cdots,\pm k)\big)=\frac{1}{2}.$$

}
\end{example}

Theorem \ref{thm5na} immediately follows from Theorem \ref{alphavertran} and the following result of \cite{SONNEMANN1974133}:
\begin{theorem}[Corrected form of Lemma 4 in \cite{SONNEMANN1974133}]
    For any graphs $G$ and $W$ we have
\begin{align}
    \alpha(G\boxtimes W)&=\max_{H\subset G^*}\big\{\alpha(H\boxtimes W)\min_{v\in \mathcal V(H)}N_v\big\}\label{eqnnne1r}\end{align}
    where  
    $H\subset G^*$ means that $H$ is an induced subgraph of $G^*$. 
    Next, if $H$ is an induced subgraph of 
     $G^*$, then
    \[
        \alpha(G\boxtimes H^c)\geq \sum_{v\in\mathcal{V}(H)}N_v
    \]

\end{theorem}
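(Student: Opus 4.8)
The plan is to prove both statements by writing down explicit independent sets in the relevant strong products, using the defining feature of $G^*$ that two independent sets $\mathcal S,\mathcal T\in\mathcal I(G)$ are \emph{non-adjacent} in $G^*$ exactly when they are disconnected in $G$, i.e.\ disjoint with $\mathcal S\cup\mathcal T\in\mathcal I(G)$. The only other fact I would use repeatedly is that two vertices of a strong product are non-adjacent precisely when, in at least one coordinate, their projections are distinct and non-adjacent. The equality \eqref{eqnnne1r} will be split into a ``$\geq$'' direction (an expansion construction valid for every $H$) and a ``$\leq$'' direction (attained by a single distinguished $H$); the final inequality will follow from one direct construction.

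For ``$\geq$'' in \eqref{eqnnne1r}, fix an induced subgraph $H\subset G^*$ and let $\mathcal A$ be a maximum independent set of $H\boxtimes W$, so $|\mathcal A|=\alpha(H\boxtimes W)$. Writing each element of $\mathcal A$ as a pair $(\mathcal S,u)$ with $\mathcal S\in\mathcal V(H)\subseteq\mathcal I(G)$ and $u\in\mathcal V(W)$, I would expand it to the block $\{(x,u):x\in\mathcal S\}$ and set $\mathcal B=\bigcup_{(\mathcal S,u)\in\mathcal A}\{(x,u):x\in\mathcal S\}\subseteq\mathcal V(G\boxtimes W)$. Independence of $\mathcal B$ is checked by cases: two vertices from the same pair $(\mathcal S,u)$ are non-adjacent because $\mathcal S$ is independent in $G$; two vertices from distinct pairs $(\mathcal S_1,u_1)\neq(\mathcal S_2,u_2)$ are non-adjacent because independence of $\mathcal A$ forces either $u_1,u_2$ to be distinct and non-adjacent in $W$ (settling it in the $W$-coordinate) or $\mathcal S_1,\mathcal S_2$ to be disconnected in $G$, so their elements are distinct and non-adjacent in $G$. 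That same disconnectedness shows distinct pairs sharing a common $u$ have disjoint blocks, so there is no double counting and $|\mathcal B|=\sum_{(\mathcal S,u)\in\mathcal A}|\mathcal S|\geq\alpha(H\boxtimes W)\cdot\min_{v\in\mathcal V(H)}N_v$. Since $H$ was arbitrary, $\alpha(G\boxtimes W)\geq\max_{H\subset G^*}\{\alpha(H\boxtimes W)\min_v N_v\}$.

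For the reverse inequality the key observation is that the singletons $\{\,\{v\}:v\in\mathcal V(G)\,\}$ induce in $G^*$ a copy of $G$ itself: $\{v_1\}$ and $\{v_2\}$ are disconnected in $G$ iff $v_1,v_2$ are non-adjacent in $G$, so the induced subgraph $H_0$ on the singletons is isomorphic to $G$ and has $N_v=1$ for all its vertices. Taking $H=H_0$ gives $\alpha(H_0\boxtimes W)\min_v N_v=\alpha(G\boxtimes W)\cdot 1$, so the maximum is at least $\alpha(G\boxtimes W)$, which together with the previous paragraph yields the equality \eqref{eqnnne1r}. For the final statement I would build the set directly: writing $\mathcal S_v\in\mathcal I(G)$ for the independent set attached to $v\in\mathcal V(H)$, set $\mathcal B=\{(x,v):v\in\mathcal V(H),\ x\in\mathcal S_v\}\subseteq\mathcal V(G\boxtimes H^c)$, which has exactly $\sum_v N_v$ elements since distinct second coordinates give distinct vertices. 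Independence follows by the same case analysis: if $v_1=v_2$ then $x_1,x_2\in\mathcal S_{v_1}$ are non-adjacent in $G$; if $v_1\neq v_2$ are non-adjacent in $H^c$ we are done in the second coordinate; and if $v_1,v_2$ are adjacent in $H^c$ then, because edges of $H^c$ are the non-edges of $H$, the sets $\mathcal S_{v_1},\mathcal S_{v_2}$ are disconnected in $G$, forcing $x_1,x_2$ distinct and non-adjacent in $G$. Hence $\alpha(G\boxtimes H^c)\geq\sum_{v\in\mathcal V(H)}N_v$.

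The step I expect to be the main obstacle is not the ``$\leq$'' direction, which is unexpectedly cheap once one spots the singleton embedding, but the careful bookkeeping in the two independence verifications: correctly translating ``disconnected in $G$'' into coordinatewise non-adjacency, and in particular getting the complement right in the last construction (it is essential that edges of $H^c$ correspond to disconnectedness, so that the $G$-coordinates separate exactly when the $H^c$-coordinates do not). The disjointness argument that rules out overcounting in the expansion block is the other delicate point, and it is presumably exactly this interaction between disconnectedness and the $\min_v N_v$ factor that required correcting in the original statement.
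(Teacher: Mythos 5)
Your proof is correct. Note that the paper itself offers no proof of this statement: it is quoted as a (corrected) result of Sonnemann's 1974 paper and used as a black box to derive Theorem 7, so there is nothing in the text to compare your argument against. Your self-contained argument checks out in all three pieces: the block-expansion $\mathcal B=\bigcup_{(\mathcal S,u)\in\mathcal A}\{(x,u):x\in\mathcal S\}$ with the disjointness observation gives the ``$\geq$'' direction correctly; the observation that the singletons induce a copy of $G$ inside $G^*$ with all $N_v=1$ settles the ``$\leq$'' direction (and any $H$ containing the vertex $\emptyset$ contributes $0$ to the maximum, so it causes no harm); and the case analysis for independence of $\{(x,v):v\in\mathcal V(H),\,x\in\mathcal S_v\}$ in $G\boxtimes H^c$ correctly uses that an edge of $H^c$ is a non-edge of the induced subgraph $H$, hence a non-edge of $G^*$, hence disconnectedness in $G$.
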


Finally, a method for the construction of \underline{non-vertex-transitive} graphs $T$ and $H$ for which 
$$\alpha^*(T|H)=\frac{|\mathcal{V}(T)|}{\alpha(T^c \boxtimes H)}$$
holds is given in the following proposition.
\begin{proposition}
Take an arbitrary vertex-transitive graph $G$ and an arbitrary graph $H$. Let $T$ be obtained by adding some new edges to $G$. The addition of new edges to graph $G$ is done in such a way that $\alpha(T^c \boxtimes H) = \alpha(G^c \boxtimes H)$
then 
$$\alpha^*(T|H) = \alpha^*(G|H)=\frac{|\mathcal{V}(T)|}{\alpha(T^c \boxtimes H)}$$
\end{proposition}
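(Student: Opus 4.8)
The plan is to pin down $\alpha^*(T|H)$ by sandwiching it between two quantities that the hypotheses force to coincide. Since $T$ is obtained from $G$ by adding edges, we have $\mathcal{V}(T)=\mathcal{V}(G)$, and in particular $|\mathcal{V}(T)|=|\mathcal{V}(G)|$. I would first record the two target inequalities
$$\frac{|\mathcal{V}(T)|}{\alpha(T^c\boxtimes H)}\ \le\ \alpha^*(T|H)\ \le\ \alpha^*(G|H),$$
and then observe that the left-hand side and the right-hand side are in fact equal, which collapses the whole chain.

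For the lower bound, I would simply invoke the general inequality \eqref{aseqrq} of Theorem \ref{alphavertran}, which holds for \emph{every} graph (no transitivity needed), applied with $T$ playing the role of $G$: this gives $\alpha^*(T|H)\ge |\mathcal{V}(T)|/\alpha(T^c\boxtimes H)$. Using the hypothesis $\alpha(T^c\boxtimes H)=\alpha(G^c\boxtimes H)$ together with $|\mathcal{V}(T)|=|\mathcal{V}(G)|$, the right-hand side equals $|\mathcal{V}(G)|/\alpha(G^c\boxtimes H)$. Finally, because $G$ is vertex-transitive, the equality case of Theorem \ref{alphavertran} yields $\alpha^*(G|H)=|\mathcal{V}(G)|/\alpha(G^c\boxtimes H)$. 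Hence the left endpoint of the sandwich already equals $\alpha^*(G|H)$.

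For the upper bound, I would use monotonicity of $\alpha^*(\cdot\,|H)$ under the addition of edges to the first argument. The key observation is that $\mathcal{E}(G)\subseteq\mathcal{E}(T)$ implies $\mathcal{E}(G\boxtimes W)\subseteq\mathcal{E}(T\boxtimes W)$ for every graph $W$, since each adjacency rule of the strong product that references an edge of $G$ remains valid once that edge is present in $T$. Thus $T\boxtimes W$ is a spanning supergraph of $G\boxtimes W$, so $\alpha(T\boxtimes W)\le\alpha(G\boxtimes W)$; dividing by $\alpha(H\boxtimes W)$ and taking the supremum over $W$ in the definition \eqref{EqnDef} gives $\alpha^*(T|H)\le\alpha^*(G|H)$. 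Combining the two bounds forces equality throughout, so $\alpha^*(T|H)=\alpha^*(G|H)=|\mathcal{V}(T)|/\alpha(T^c\boxtimes H)$, as claimed. The only step that needs a small verification is the strong-product edge inclusion underlying the monotonicity, which is routine; the real content of the argument is recognizing that the general lower bound of Theorem \ref{alphavertran} for the (possibly non-transitive) graph $T$ and the exact transitive value for $G$ are pinned to the same number precisely by the hypothesis $\alpha(T^c\boxtimes H)=\alpha(G^c\boxtimes H)$.
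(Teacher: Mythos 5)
Your proof is correct and follows essentially the same route as the paper: the upper bound $\alpha^*(T|H)\le\alpha^*(G|H)$ is the paper's Lemma \ref{lemmaexpand} applied to $T\in Expand(G)$ (your direct verification of the strong-product edge inclusion is exactly the observation underlying that lemma), and the lower bound uses the general inequality of Theorem \ref{alphavertran} for $T$ together with its equality case for the vertex-transitive $G$, just as the paper does.
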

\begin{proof}
    Since $T\in Expand(G)$, from Lemma \ref{lemmaexpand} we have $\alpha^*(G|H) \geq \alpha^*(T|H)$. To show the reverse direction, note that 
\begin{align*}
    \alpha^*(G|H)=\frac{|\mathcal{V}(G)|}{\alpha(G^c \boxtimes H)}
    =\frac{|\mathcal{V}(G)|}{\alpha(T^c \boxtimes H)}\leq \alpha^*(T|H).
\end{align*}
This shows $\alpha^*(G|H) = \alpha^*(T|H)$. 
\end{proof}

}

\subsubsection{The Expand Operations}
Finally, we provide another definition that is useful in studying $\alpha^*(G|H)$.
\begin{definition}[Expand Operations]
    \label{def:expand}
    Let $G$ be an arbitrary graph. We define $Expand(G)$ as the set of all graphs that can be obtained by a sequence of the following three operations on $G$:
    \begin{enumerate}[(I)]
        \item Remove a vertex $v$ from $G$.
        \item Replace a vertex $v$ of $G$ by a clique of arbitrary size.     In other words, put a clique of size $k$ instead of vertex $v$ and connect these $k$ new vertices to all neighbors of $v$.
        \item Add a new edge to $G$.
    \end{enumerate}
\end{definition}

\begin{remark}
\label{mrk:merge}
    Note that by the above operations, one can merge (contract) any two vertices $v$ and $u$ in $G$. To achieve this, we only need to delete one of the vertices, for instance $v$, by applying an operation of type (I). Then, we add new edges between $u$ and the neighbors of $v$ by applying an operation of type (III). This allows us to merge the two vertices $v$ and $u$ into a single vertex.
\end{remark}
\begin{remark}
    \label{rem:expand-order}
    Definition \ref{def:expand} gives three types of operations.
    To reach a graph $H$ from a given graph $G$ using the Expand operations, we can always assume that the operations occur in the following order: first a series of operations (I), then a series of operations (II), and finally a series of operations (III). To see this, observe that the operations of adding an edge (type (III)) can always be postponed to the very end. Then, if we have a sequence of operations of type (I) or (II), the operations of removing a vertex (type (I)) can all be performed first at the beginning. 
\end{remark}

    One can easily verify that all the above Expand operations, when applied to a graph $G$, will not increase $\alpha(G \boxtimes W)$ for any choice of graph $W$. Consequently, we obtain the following lemma:
    \begin{lemma}\label{lemmaexpand}
        If $G'\in Expand(G)$ and $H'\in Expand(H)$, then we have
        $$\alpha^*(G'|H)\leq \alpha^*(G|H)\leq \alpha^*(G|H').$$ 
    \end{lemma}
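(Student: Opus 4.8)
The plan is to reduce everything to the single monotonicity fact highlighted just before the statement: if $G'\in Expand(G)$, then $\alpha(G'\boxtimes W)\le \alpha(G\boxtimes W)$ for every graph $W$. Once this is established, both inequalities of the lemma are one-line consequences of the definition $\alpha^*(G|H)=\sup_W \alpha(G\boxtimes W)/\alpha(H\boxtimes W)$ in \eqref{EqnDef}.

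First I would verify the monotonicity fact. Since $Expand$ is generated by the three operations (I)--(III) of Definition \ref{def:expand}, and since a composition of monotone steps is monotone, it suffices to check each operation individually. For (I) (delete a vertex) and (III) (add an edge), the argument is immediate: deleting a vertex of $G$ makes $G'\boxtimes W$ an induced subgraph of $G\boxtimes W$, while adding an edge to $G$ only adds edges to $G'\boxtimes W$ (same vertex set, more edges). In either case every independent set of $G'\boxtimes W$ is an independent set of $G\boxtimes W$, giving $\alpha(G'\boxtimes W)\le\alpha(G\boxtimes W)$.

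The only operation requiring genuine work is (II), replacing a vertex $v$ by a clique $v_1,\dots,v_k$ each joined to all neighbors of $v$. I would take a maximum independent set $\mathcal I'$ of $G'\boxtimes W$ and fold it back via the map $(v_i,w)\mapsto (v,w)$, keeping all other vertices fixed. The key observation is that for each fixed $w\in\mathcal V(W)$ at most one clique vertex can occur in $\mathcal I'$: since $v_i$ and $v_j$ are adjacent in $G'$, the vertices $(v_i,w)$ and $(v_j,w)$ are adjacent in $G'\boxtimes W$, so they cannot both lie in $\mathcal I'$. This makes the folding injective on $\mathcal I'$, hence cardinality-preserving. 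A short case check---according to whether the two endpoints use clique vertices and whether they share their $W$-coordinate---shows the folding sends non-adjacent pairs to non-adjacent pairs, using that each $v_i$ has exactly the same neighbors outside the clique as $v$ has in $G$. Thus the image is an independent set of $G\boxtimes W$ of the same size, and $\alpha(G'\boxtimes W)\le \alpha(G\boxtimes W)$. This case analysis for (II) is the one real obstacle; everything else is formal.

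Finally I would assemble the lemma. For the left inequality, $G'\in Expand(G)$ gives $\alpha(G'\boxtimes W)\le\alpha(G\boxtimes W)$ for all $W$, so dividing by $\alpha(H\boxtimes W)>0$ and taking the supremum over $W$ yields $\alpha^*(G'|H)\le\alpha^*(G|H)$. For the right inequality, $H'\in Expand(H)$ gives $\alpha(H'\boxtimes W)\le\alpha(H\boxtimes W)$, hence $1/\alpha(H'\boxtimes W)\ge 1/\alpha(H\boxtimes W)$; multiplying by $\alpha(G\boxtimes W)$ and taking the supremum over $W$ gives $\alpha^*(G|H)\le\alpha^*(G|H')$. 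Chaining the two completes the proof.
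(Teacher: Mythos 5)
Your proof is correct and follows exactly the route the paper takes: the paper simply asserts that each Expand operation cannot increase $\alpha(G\boxtimes W)$ for any $W$ and derives the lemma from the definition \eqref{EqnDef}, which is precisely your argument. The only difference is that you supply the folding argument for operation (II) that the paper leaves as ``one can easily verify,'' and that case analysis is sound.
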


    \begin{corollary}\label{Cor:expandconj}
        If $G \in Expand(H)$, then $\alpha^{*}(G|H) \leq \alpha^*(H|H)= 1$.
    \end{corollary}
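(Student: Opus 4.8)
The plan is to derive this immediately from Lemma \ref{lemmaexpand} together with the already-recorded identity $\alpha^*(H|H)=1$. The only substantive point is that the hypothesis $G\in Expand(H)$ is precisely what licenses the use of the first inequality in Lemma \ref{lemmaexpand}, once the variable names are matched up correctly.

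First I would recall that, among the immediate observations following the definition in \eqref{EqnDef}, we noted $\alpha^*(H|H)=1$ for every graph $H$. This is trivial from the definition, since $\alpha(H\boxtimes W)/\alpha(H\boxtimes W)=1$ for every $W$, so the supremum defining $\alpha^*(H|H)$ equals $1$. Next I would apply Lemma \ref{lemmaexpand}, which (for its generic symbols) asserts in particular that $\alpha^*(G'|H)\le \alpha^*(G|H)$ whenever $G'\in Expand(G)$. I relabel this inequality by taking the lemma's base graph to be our $H$, the lemma's expanded graph to be our $G$, and the lemma's conditioning graph to be our $H$ as well. Since by hypothesis $G\in Expand(H)$, the substitution is legitimate, and the first inequality of the lemma becomes $\alpha^*(G|H)\le \alpha^*(H|H)$.

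Combining the two steps yields $\alpha^*(G|H)\le \alpha^*(H|H)=1$, which is exactly the claim. I expect no genuine obstacle here: the statement simply records that each Expand operation (deleting a vertex, blowing a vertex up into a clique, or adding an edge) can only decrease the relative fractional independence number taken with respect to the original graph, and $\alpha^*(H|H)=1$ supplies the base value. The single point requiring care is the bookkeeping in the relabeling of Lemma \ref{lemmaexpand}, because the lemma's generic conditioning graph is also named $H$ and thus collides notationally with the graph $H$ appearing in the corollary; one must check that the role assigned to each occurrence is consistent, which it is.
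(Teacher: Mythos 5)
Your proposal is correct and matches the paper's (implicit) argument exactly: the corollary is stated as an immediate consequence of Lemma \ref{lemmaexpand} applied with the lemma's base graph taken to be $H$ and its expanded graph taken to be $G$, combined with the earlier observation that $\alpha^*(H|H)=1$. The relabeling you flag is handled correctly, and no further justification is needed.
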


\begin{corollary}\label{corrnew23}
If $G \in Expand(H)$ and $\alpha(G)\geq \alpha(H)$, then $\alpha^{*}(G|H) = 1$. 
\end{corollary}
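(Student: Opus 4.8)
The plan is to combine two one-sided bounds that are each supplied by one of the two hypotheses. The upper bound $\alpha^*(G|H)\le 1$ is already available essentially for free: Corollary \ref{Cor:expandconj} uses only the assumption $G\in Expand(H)$ and gives $\alpha^*(G|H)\le \alpha^*(H|H)=1$. So the whole content of the corollary reduces to establishing the matching lower bound $\alpha^*(G|H)\ge 1$, and this is exactly where the extra hypothesis $\alpha(G)\ge\alpha(H)$ enters.

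For the lower bound, I would return to the definition of $\alpha^*(G|H)$ as the supremum $\sup_W \alpha(G\boxtimes W)/\alpha(H\boxtimes W)$ over all graphs $W$ in \eqref{EqnDef}. Specializing $W$ to the one-vertex graph collapses $G\boxtimes W$ to $G$ and $H\boxtimes W$ to $H$, so the ratio becomes $\alpha(G)/\alpha(H)$; since the supremum dominates any single choice of $W$, this yields $\alpha^*(G|H)\ge \alpha(G)/\alpha(H)$ (indeed this inequality is recorded right after \eqref{EqnDef}). Invoking $\alpha(G)\ge\alpha(H)$ then gives $\alpha(G)/\alpha(H)\ge 1$, hence $\alpha^*(G|H)\ge 1$.

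Putting the two inequalities together gives $\alpha^*(G|H)=1$, which completes the argument. There is no real obstacle here; the only point worth flagging is that the two bounds are powered by different assumptions — the Expand containment drives the $\le 1$ direction via Corollary \ref{Cor:expandconj}, while the independence-number inequality drives the $\ge 1$ direction via the one-vertex specialization — so both hypotheses are genuinely needed and neither alone suffices.
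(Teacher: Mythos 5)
Your argument is correct and is essentially identical to the paper's own proof: the upper bound $\alpha^*(G|H)\le 1$ comes from $G\in Expand(H)$ via Corollary \ref{Cor:expandconj}, and the lower bound comes from $\alpha^*(G|H)\ge \alpha(G)/\alpha(H)\ge 1$ using the one-vertex choice of $W$. Nothing further is needed.
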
 
\begin{proof}
Since $G \in Expand(H)$, we have $\alpha^{*}(G|H) \leq 1$. On the other hand, $\alpha^{*}(G|H) \geq \frac{\alpha(G)}{\alpha(H)}\geq 1$.

\end{proof}
\begin{example} 
\label{example2d}
As an example, we compute the relative fractional independence number of a certain Cayley graph with respect to another Cayley graph. Let $G = Cay(\mathbb{Z}_{(r+2)k+r}, \pm 1,\pm 2, \dots, \pm (r+1))$ and $H = Cay(\mathbb{Z}_{(r+1)k+r}, \pm 1,\pm 2, \dots, \pm r)$ for natural numbers $r$ and $k$. We show that $\alpha^*(G| H) = 1.$

Note that $$\alpha(G) = \left\lfloor \frac{(r+2)k+r}{r+2} \right\rfloor = k, \qquad \alpha(H)= \left\lfloor \frac{(r+1)k+r}{r+1} \right\rfloor = k.$$ To utilize Corollary \ref{corrnew23}, we need to show that we can derive $G$ from $H$ by using the expand operations.
A demonstration for the special case of $k=3, r = 1$ is given in Figure \ref{fignw1}. We proceed as follows: first label the $i$-th vertex of $H$ with $$u_{i+\left\lceil \frac{i}{r+1}\right\rceil - 1}$$ 
for $i=1,2,\cdots, (r+1)k+r$. In other words, the sequences of vertex labels are $$u_1, u_2, \cdots, u_{r+1}, u_{r+3}, u_{r+4}, \cdots, u_{2r+3}, u_{2r+5},u_{2r+6},...$$ with no vertex labeled as $u_{(r+2)i}$.
Now, for any $1 \leq i \leq k$, replace each vertex $u_{(r+2)i-1}$ with a clique of size two and label the two new vertices with $u_{(r+2)i-1}$ and $u_{(r+2)i}$. The resulting graph has $(r+2)k+r$ vertices. The neighbors of each vertex is as follows:
         \[N(u_i)=\begin{cases}
         \lbrace u_{i-(r+1)}, \dots ,u_{i-1}, u_{i+1}, \dots, u_{i+r} \rbrace, &  \text{if $i$ is divisible by $(r+2)$}, \\
        \lbrace u_{i-(r+1)}, \dots ,u_{i-1}, u_{i+1}, \dots, u_{i+r+1} \rbrace, &\text{if the remainder of $i$ divided by $(r+2)$ is between $1$ and $r$, }  \\
        \lbrace u_{i-r}, \dots ,u_{i-1}, u_{i+1}, \dots, u_{i+r+1} \rbrace, &\text{if the remainder of $i$ divided by $(r+2)$ is $r+1$}. 
        \end{cases}
        \]
This is a spanning subgraph of $G$. Therefore, by adding the missing edges to this graph, one can obtain $G = Cay(\mathbb{Z}_{(r+2)k+r}, \pm 1,\pm 2, \dots, \pm (r+1))$.
    \begin{figure}
        \centering
        \includegraphics[width=.3\textwidth]{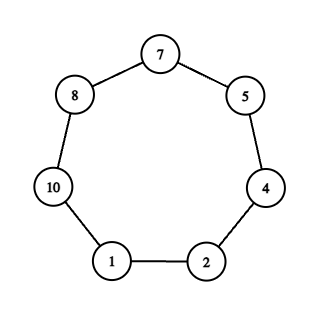}\hfill
        \includegraphics[width=.3\textwidth]{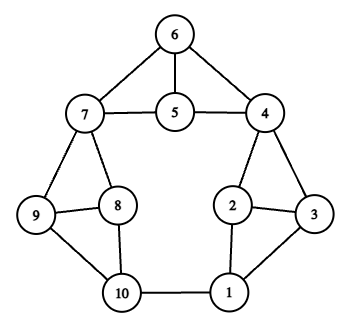}\hfill
        \includegraphics[width=.3\textwidth]{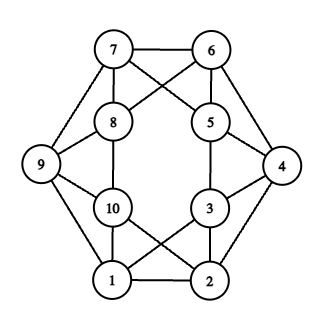}
        \caption{Replacing vertices 2, 5, and 8 with a clique of size two, and then adding extra edges (1,9), (2,10), (3,5), and (6,8) yields the graph $Cay(\mathbb{Z}_{10}, \pm 1,\pm 2)$. \label{fignw1}}
    \end{figure}

\end{example}
We continue by a remark about Corollary \ref{corrnew23}:
\begin{remark}
    Take two arbitrary graphs, $G$ and $H$ and two natural numbers $p$ and $q$. Let $$H'=\underbrace{H+H+\cdots+H}_{p \text{ times}} \, \, \text{and} \, \, G'=\underbrace{G+G+\cdots+G}_{q \text{ times}}.$$  Then, for any graph $W$, $\alpha(H' \boxtimes W) = p \cdot \alpha(H \boxtimes W)$ and $\alpha(G' \boxtimes W) = q \cdot \alpha(G \boxtimes W)$, and therefore $\alpha(G|H) \leq \frac{p}{q}$ is equivalent to $\alpha(G'|H') \leq 1$. Therefore, one may potentially utilize Corollary \ref{corrnew23} to prove $\alpha(G|H) = \frac{p}{q}$.
\end{remark}

{
Next, we have the following result:
\begin{theorem}\label{thmN4new}
    Take arbitrary graphs $G$ and $H$, and let $\mathcal{S}$ be an independent set of $G^c\boxtimes H$. Define $\mathcal{S}_G\subset\mathcal{V}(G)$ as the set of vertices of $G$ used in the independent set as follows:
    $$\mathcal{S}_G=\{u\in \mathcal{V}(G): (u,v)\in \mathcal S \text{ for some } v\in \mathcal V(H)\}.$$
    Let $G'$ be the induced subgraph of $G$ on vertices $\mathcal{S}_G$. Similarly, we define $H'$ as the induced graph on the vertices of $H$ used in the independent set $\mathcal{S}$.
    Then, $G'\in Expand(H')$ and consequently, $\alpha^*(G'|H')\leq 1$. In particular, since $H'\in Expand(H)$, we also obtain $G'\in Expand(H)$ and $\alpha^*(G'|H)\leq 1$.
\end{theorem}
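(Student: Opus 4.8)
The plan is to unpack the independent set $\mathcal{S}$ of $G^c\boxtimes H$ into the combinatorial data already appearing in Theorem~\ref{alphavertran}, and then realize $G'$ as a clique blow-up of $H'$ followed by a merge and an edge-addition. First I would set, for each $v_i\in\mathcal{V}(G)$, the fiber $\mathcal{T}_i=\{w\in\mathcal{V}(H):(v_i,w)\in\mathcal{S}\}$. Exactly as in the characterization of $\alpha(G^c\boxtimes H)$, the independence of $\mathcal{S}$ in $G^c\boxtimes H$ forces each $\mathcal{T}_i\in\mathcal{I}(H)$ and forces $\mathcal{T}_i,\mathcal{T}_j$ to be disconnected in $H$ (Definition~\ref{def2sep}) whenever $v_iv_j\notin\mathcal{E}(G)$. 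By construction $\mathcal{S}_G=\{v_i:\mathcal{T}_i\neq\emptyset\}$, while the used vertices of $H$ are $\mathcal{S}_H=\bigcup_i\mathcal{T}_i=\mathcal{V}(H')$; since every vertex of $H'$ lies in some fiber, each nonempty $\mathcal{T}_i$ is a nonempty independent set of the induced graph $H'$.

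The key observation I would isolate is that for each $w\in\mathcal{V}(H')$ the set $C_w=\{v_i\in\mathcal{S}_G:w\in\mathcal{T}_i\}$ is a clique of $G'$. Indeed, if $v_i,v_j\in C_w$ are distinct then $w\in\mathcal{T}_i\cap\mathcal{T}_j$, so $\mathcal{T}_i,\mathcal{T}_j$ are not disconnected, and the contrapositive of the disconnectedness property gives $v_iv_j\in\mathcal{E}(G)$, hence $v_iv_j\in\mathcal{E}(G')$ since $G'$ is induced. With this in hand I would build $G'$ from $H'$ in the order permitted by Remark~\ref{rem:expand-order}. Using operation~(II) repeatedly, replace each vertex $w$ of $H'$ by a clique $\{(w,v_i):v_i\in C_w\}$ of size $|C_w|\ge 1$; the resulting blow-up $H''\in Expand(H')$ has $(w,v_i)\sim(w',v_j)$ iff $w=w'$ (same clique) or $ww'\in\mathcal{E}(H')$. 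Then, via the merge move of Remark~\ref{mrk:merge} (an operation~(I) followed by operations~(III)), contract, for each $v_i\in\mathcal{S}_G$, all vertices $(w,v_i)$ with $w\in\mathcal{T}_i$ into a single vertex $v_i$, producing a simple graph $G''$ on vertex set $\mathcal{S}_G=\mathcal{V}(G')$.

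It then remains to compare $G''$ with $G'$. Tracking which edges survive the contraction, $v_i\sim_{G''}v_j$ holds exactly when some $(w,v_i)$ is adjacent to some $(w',v_j)$ in $H''$, that is, exactly when $\mathcal{T}_i\cap\mathcal{T}_j\neq\emptyset$ or there is an $H$-edge between $\mathcal{T}_i$ and $\mathcal{T}_j$ — in other words precisely when $\mathcal{T}_i,\mathcal{T}_j$ are \emph{not} disconnected. By the disconnectedness property this again forces $v_iv_j\in\mathcal{E}(G')$, so $\mathcal{E}(G'')\subseteq\mathcal{E}(G')$, i.e. $G''$ is a spanning subgraph of $G'$. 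Adding the missing edges by operation~(III) yields $G'$, proving $G'\in Expand(H')$; Corollary~\ref{Cor:expandconj} then gives $\alpha^*(G'|H')\le 1$. For the final assertion, $H'$ is an induced subgraph of $H$, hence $H'\in Expand(H)$ via operations~(I), and composing the two expansion sequences gives $G'\in Expand(H)$ and $\alpha^*(G'|H)\le 1$.

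The main obstacle is bookkeeping the adjacencies through the two stages: verifying that the clique blow-up realizes exactly the ``same clique or $H'$-adjacent'' relation, and that the subsequent merge produces precisely the ``not disconnected'' relation on the fibers $\mathcal{T}_i$. Once these two adjacency descriptions are pinned down, the inclusion $\mathcal{E}(G'')\subseteq\mathcal{E}(G')$ is a direct consequence of the single structural fact that overlap or $H$-adjacency of fibers forces a $G$-edge.
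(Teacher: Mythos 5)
Your proposal is correct and follows essentially the same route as the paper's proof: your cliques $C_w$ are exactly the paper's sets $\mathcal B_v$, and both arguments blow up each vertex of $H'$ into a clique labeled by its fiber, merge equal labels via Remark \ref{mrk:merge}, verify that every surviving adjacency forces an edge of $G'$, and finish by adding the missing edges. No gaps.
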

A proof of the above theorem is given in Section \ref{proofthmN4new}.

\subsubsection{$G \in Expand(H)$ vs $\alpha^{*}(G|H)
\leq 1$}
From Lemma \ref{lemmaexpand}, it follows that
if $G \in Expand(H)$, then $\alpha^{*}(G|H) \leq \alpha^{*}(H|H)=1$. One might ask if the converse is also true, i.e., whether $\alpha^{*}(G|H)\leq 1$ implies that $G \in Expand(H)$. 
We show that the converse statement is false in general, but holds for a certain class of Cayley graphs (which includes all cycles) and for all perfect graphs.

\begin{theorem}\label{Thm:bellowcayley} In the following cases
$\alpha^*(G|H) \leq 1$ implies $G \in Expand(H)$:
\begin{enumerate}[i.]
        \item If $G$ is the Cayley graph $Cay(\mathbb{Z}_n, \pm 1,\pm 2,\dots \pm k)$, where $n = c(k+1)+1$ or $n = c(k+1)$ for an arbitrary integer $c$, and $H$ is an arbitrary graph
    \item If $G$ is a perfect graph and $H$ is an arbitrary graph.
\end{enumerate}
\end{theorem}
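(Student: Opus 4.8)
The plan is to reduce both parts to a single sufficient condition and then verify it. Call it the \emph{clique-cover criterion}: if $\mathcal V(G)$ can be partitioned into $t$ cliques and $\alpha(H)\ge t$, then $G\in Expand(H)$. Indeed, choosing an independent set $\{h_1,\dots,h_t\}$ of $H$, one first deletes (operation (I) of Definition \ref{def:expand}) every vertex of $H$ outside $\{h_1,\dots,h_t\}$, leaving the edgeless graph on $t$ vertices; then one replaces each $h_s$ by a clique (operation (II)) of size equal to that of the $s$-th clique in the partition of $G$; because the $h_s$ are pairwise non-adjacent, the result is the disjoint union of these $t$ cliques, a spanning subgraph of $G$; finally operation (III) adds the remaining edges of $G$, producing $G$ itself. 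Throughout I use that for vertex-transitive $G$ Theorem \ref{alphavertran} gives $\alpha^*(G|H)=|\mathcal V(G)|/\alpha(G^c\boxtimes H)$, so $\alpha^*(G|H)\le 1$ is equivalent to $\alpha(G^c\boxtimes H)\ge|\mathcal V(G)|$, and that $\alpha^*(G|H)\ge\alpha(G)/\alpha(H)$ always holds. For an independent set $\mathcal S$ of $G^c\boxtimes H$ I write $\mathcal T_u=\{v\in\mathcal V(H):(u,v)\in\mathcal S\}\in\mathcal I(H)$ for each column $u\in\mathcal V(G)$, so that $\mathcal T_u,\mathcal T_{u'}$ are disconnected in $H$ whenever $uu'\notin\mathcal E(G)$.

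For part (ii), let $G$ be perfect. From $\alpha^*(G|H)\le 1$ and $\alpha^*(G|H)\ge\alpha(G)/\alpha(H)$ we obtain $\alpha(H)\ge\alpha(G)$. Since $G$ is perfect, so is $G^c$, whence the clique-cover number of $G$ equals $\chi(G^c)=\omega(G^c)=\alpha(G)$; thus $\mathcal V(G)$ splits into $\alpha(G)$ cliques. Applying the clique-cover criterion with $t=\alpha(G)\le\alpha(H)$ yields $G\in Expand(H)$.

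For part (i) write $G=Cay(\mathbb Z_n,\pm1,\dots,\pm k)=C_n^k$, which is vertex-transitive; its cliques are the arcs of at most $k+1$ consecutive vertices, so its clique-cover number is $\chi(G^c)=\lceil n/(k+1)\rceil$ (equal to $c$ when $n=c(k+1)$ and to $c+1$ when $n=c(k+1)+1$). Assume $\alpha^*(G|H)\le1$, so $\alpha(G^c\boxtimes H)\ge n$, and fix a maximum independent set $\mathcal S$ of $G^c\boxtimes H$. If every column of $G$ is used by $\mathcal S$, then in the notation of Theorem \ref{thmN4new} the induced subgraph $G'$ on the used columns is all of $G$, and that theorem gives $G\in Expand(H)$ at once.

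The remaining case — a maximum $\mathcal S$ missing some column $u_0$ — is the crux, and is exactly where $n=c(k+1)+1$ (for which $G^c$ is \emph{not} $\alpha(G)$-colourable) genuinely differs from the divisible case. Here $\mathcal S$ is an independent set of the induced graph $(G-u_0)^c\boxtimes H$, so $n\le\alpha((G-u_0)^c\boxtimes H)$, and the key estimate is $\alpha((G-u_0)^c\boxtimes H)\le(k+1)\alpha(H)$. I prove this by exhibiting a partition of $\mathcal V(G)\setminus\{u_0\}$ into $k+1$ independent sets of $G$: taking $u_0=0$ by vertex-transitivity, the offset classes $\{\,j,\,j+(k+1),\dots,j+(c-1)(k+1)\,\}$ for $j=1,\dots,k+1$ work, because any two members of one class are at circular distance at least $k+1$ in $C_n$ (their difference is a multiple $a(k+1)$ of $k+1$ with $1\le a\le c-1$, and both $a(k+1)$ and $n-a(k+1)$ are at least $k+1$) and hence non-adjacent in $G$; the sets $\mathcal T_u$ over a single class are therefore pairwise disconnected in $H$, so their union is independent in $H$ and contributes at most $\alpha(H)$, and summing over the $k+1$ classes gives the bound. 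Combining, $n\le(k+1)\alpha(H)$ forces $\alpha(H)\ge\lceil n/(k+1)\rceil=\chi(G^c)$, and the clique-cover criterion with $t=\chi(G^c)$ delivers $G\in Expand(H)$. I expect this estimate to be the main obstacle: the point is that deleting the unused column $u_0$ breaks the cyclic obstruction and lets $\mathcal V(G)\setminus\{u_0\}$ be coloured by $k+1$ independent sets, converting $\alpha(G^c\boxtimes H)\ge n$ into the clean arithmetic statement $\alpha(H)\ge\chi(G^c)$. The divisible case $n=c(k+1)$ fits the same template but is easier, since then $G$ itself is $(k+1)$-colourable and the identical counting gives $\alpha(H)\ge c=\chi(G^c)$ with no vertex removed.
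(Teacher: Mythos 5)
Your proof is correct and follows essentially the same route as the paper's: vertex-transitivity reduces $\alpha^*(G|H)\le 1$ to $\alpha(G^c\boxtimes H)\ge n$, the all-columns-used case is dispatched by Theorem \ref{thmN4new}, the unused-column case is handled by partitioning the remaining vertices into $k+1$ independence classes modulo $k+1$ to extract an independent set of $H$ of size $\lceil n/(k+1)\rceil$, and both parts conclude by blowing up an independent set of $H$ into a clique cover of $G$. The only differences are cosmetic: you replace the paper's pigeonhole over the $k+1$ classes by the equivalent aggregate bound $|\mathcal S|\le(k+1)\alpha(H)$, and you package the final construction as a reusable clique-cover criterion applied to both parts.
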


A proof of the above theorem is given in Section \ref{proofthmCayley}.  Note that by setting $k = 1$ in the first part of Theorem \ref{Thm:bellowcayley}, we observe that Theorem \ref{Thm:bellowcayley} holds when $G$ is a cycle.

Next, we construct an example for which $\alpha^*(G|H) \leq 1$ but $G \notin Expand(H)$. Set $G=Cay(\mathbb{Z}_n, \pm 1,\pm 2,\pm 3)$ and $H=C_n$ where $n$ is an arbitrary odd number greater than $7$. Then, Example \ref{example1d} yields $\alpha^*(G|C_n)=\frac{1}{2}$ which is equivalent to $$\alpha^*(G + G|C_n)=1.$$

On the other hand, $G+G \notin Expand(C_n)$ follows from Lemma \ref{lemmaNex} below as for any odd $n$ we have
$$\alpha^*(G+G)=2\alpha^*(G)=\frac{n}{2}>\left\lfloor \frac n2\right\rfloor.$$
\begin{lemma}\label{lemmaNex}
    If $T$ is a disconnected graph and $T\in Expand(C_n)$, then
$$\alpha^*(T)\leq \alpha(C_n)=\left\lfloor \frac n2\right\rfloor$$
\end{lemma}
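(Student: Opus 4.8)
The plan is to exploit the canonical ordering of Expand operations (Remark \ref{rem:expand-order}) to reduce $T$ to a disjoint union of paths, and then to recover the extra half-unit that separates $\lfloor n/2\rfloor$ from $\alpha^*(C_n)=n/2$ by a parity argument in which the disconnectedness of $T$ plays an essential role. Note first that mere monotonicity, $\alpha^*(T)\le\alpha^*(C_n)=n/2$ (Lemma \ref{lemmaexpand} with a one-vertex second argument), is too weak for odd $n$, so the hypothesis that $T$ is disconnected must be used.

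First I would rewrite the transformation $C_n\to T$ so that all vertex-deletions (type I) occur first, followed by the clique-blow-ups (type II) and edge-additions (type III), as permitted by Remark \ref{rem:expand-order}. Let $G_0$ be the graph obtained right after the deletions; it is an induced subgraph of $C_n$, hence either $C_n$ itself (if nothing is deleted) or a disjoint union of paths. Since blow-ups preserve the number of connected components and edge-additions can only decrease it, a disconnected $T$ forces at least one deletion; thus $G_0=\bigsqcup_{j=1}^{g}P_{a_j}$ with $\sum_j a_j=n-r$, where $r\ge 1$ is the number of deleted vertices and $g$ is the number of remaining arcs. Because deleted and remaining arcs alternate around the cycle, there are exactly $g$ deleted arcs of total size $r$, whence $g\le r$. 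As $T\in Expand(G_0)$, Lemma \ref{lemmaexpand} gives $\alpha^*(T)\le\alpha^*(G_0)$.

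Next I would evaluate $\alpha^*(G_0)$ exactly. Being a forest, $G_0$ is bipartite, hence perfect, hence universal, so $\alpha^*(G_0)=\alpha(G_0)=\sum_{j=1}^{g}\lceil a_j/2\rceil$. Writing $o$ for the number of paths $P_{a_j}$ of odd order, this equals $\tfrac12\big((n-r)+o\big)$, subject to $o\le g\le r$. For even $n$ the bound $o\le r$ already yields $\alpha^*(G_0)\le n/2=\lfloor n/2\rfloor$, so only the odd case requires work.

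The crux is the odd case, where I must rule out $o=r$ (which would give exactly $n/2>\lfloor n/2\rfloor$). If $o\ge r$, then $o=g=r$, meaning every deleted arc is a single vertex and every remaining path has odd order; but then $n-r=\sum_j a_j$ is a sum of $r$ odd numbers, forcing $n\equiv 2r\equiv 0\pmod 2$, contradicting $n$ odd. Hence $o\le r-1$ and $\alpha^*(G_0)\le\tfrac12\big((n-r)+(r-1)\big)=\lfloor n/2\rfloor$; combining with $\alpha^*(T)\le\alpha^*(G_0)$ completes the argument. I expect the main obstacle to be exactly this parity bookkeeping: one must track how many path-components are odd rather than merely bounding their total length, since the gap between the naive bound $n/2$ and the target $\lfloor n/2\rfloor$ is precisely what disconnectedness—via the constraint $g\le r$ together with the parity obstruction—closes.
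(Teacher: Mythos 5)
Your proof is correct and follows essentially the same route as the paper: reorder the Expand operations so deletions come first, use disconnectedness of $T$ to force at least one vertex deletion, and observe that the resulting union of paths $G_0$ is perfect (hence universal), so $\alpha^*(T)\le\alpha^*(G_0)=\alpha(G_0)$. The only divergence is your final parity bookkeeping, which is superfluous: $G_0$ is an induced subgraph of $C_n$, so $\alpha(G_0)\le\alpha(C_n)=\left\lfloor \frac{n}{2}\right\rfloor$ immediately, which is exactly how the paper concludes.
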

\begin{proof}
    As argued in Remark \ref{rem:expand-order}, one can argue if $T=Expand(H)$, then one can also reach $T$ from $H$ by first performing some operations of type (I), then some operations of type (II), and finally some operations of type (III). Since $T$ is a disconnected graph while $C_n$ is a connected graph, we must utilize the vertex deletion operation (operation of type (I)) in the first stage. This is because the second and third Expand operations alone cannot decrease the connectivity of a graph. Let $H'$ be the graph obtained from removing at least one vertex from $H=C_n$. Then $T$ will be in the Expand of $H'$, and therefore $\alpha^*(T|H')\leq 1$. Note, however, that the graph $H'$  will be a perfect graph (and thus a universal graph); removal of any one vertex from $C_n$ will make it a perfect graph. Therefore,
$\alpha^*(T|H')=\alpha^*(T)/\alpha(H')$. Thus,
$$\alpha^*(T)\leq \alpha(H')\leq \alpha(C_n)=\left\lfloor \frac n2\right\rfloor.$$
\end{proof}

Finding necessary or sufficient conditions for $G=Expand(H)$ is an interesting open problem. As an example, we give such a condition below. Let $\omega(G)=\alpha(G^c)$ be the clique number of $G$ (size of the maximum clique in $G$). 
\begin{proposition}
    If $G\in Expand(H)$, then one can assign a non-negative \emph{integer} $s(v)$ to every vertex in $H$ such that
    $|\mathcal{V}(G)|=\sum_{v\in \mathcal{V}(H)}s(v).$
    Moreover, for every clique $C$ in $H$ we have
$\sum_{v\in C}s(v)\leq \omega(G).$
\end{proposition}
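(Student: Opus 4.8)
The plan is to define $s(v)$ as the number of vertices of $G$ that \emph{descend} from $v$ under the sequence of Expand operations, and then to show that the image of any clique of $H$ remains a clique in $G$. By Remark~\ref{rem:expand-order} I may assume the operations occur in the canonical order: first a batch of vertex deletions (type (I)), producing a graph $H_1$; then a batch of clique replacements (type (II)), producing a graph $H_2$; and finally a batch of edge additions (type (III)), producing $G$. For each $v\in\mathcal V(H)$ I set $s(v)=0$ if $v$ is deleted during the first batch, and otherwise $s(v)=k_v$, where $k_v\ge 1$ is the size of the clique that replaces the surviving vertex $v$ (with $k_v=1$ for a vertex left untouched). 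Since the type-(III) operations do not alter the vertex set, $|\mathcal V(G)|=|\mathcal V(H_2)|=\sum_{v}s(v)$, which is the first claim; the $s(v)$ are non-negative integers by construction.

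For the clique inequality, fix a clique $C$ of $H$ and let $C'=\{v\in C: s(v)>0\}$ be the vertices of $C$ that survive the deletion batch. I would argue that the collection $\mathcal K$ of all vertices of $G$ descending from $C'$ is a clique in $G$; granting this, $|\mathcal K|=\sum_{v\in C'}k_v=\sum_{v\in C}s(v)$ is at most $\omega(G)$, as required. To see that $\mathcal K$ is a clique, note first that $C'$ is a clique of $H_1$, since deleting vertices outside $C'$ leaves all edges among $C'$ intact. Within the replacement of a single $v\in C'$, the $k_v$ new vertices form a clique by definition of operation (II). For two distinct $u,w\in C'$, the edge $uw$ of $H_1$ guarantees, via the rule that a replacing clique is joined to \emph{all} neighbors of the replaced vertex, that every vertex descending from $u$ is adjacent in $H_2$ to every vertex descending from $w$. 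Hence $\mathcal K$ is already a clique in $H_2$, and the type-(III) edge additions only preserve adjacencies, so $\mathcal K$ remains a clique in $G$.

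The only step requiring genuine care is the verification that $\mathcal K$ is complete across two replaced vertices $u,w\in C'$: one must check that the ``joined to all neighbors'' rule of operation (II) propagates correctly regardless of the order in which $u$ and $w$ are expanded. If $u$ is expanded before $w$, its new vertices are joined to $w$ while $w$ is still a single vertex, so that when $w$ is later expanded its new vertices are joined to all of the earlier clique at $u$; the symmetric case is identical. Once this bookkeeping is settled the remainder is immediate, so I expect no further obstacle.
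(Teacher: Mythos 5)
Your proof is correct and follows essentially the same route as the paper's: both define $s(v)$ as the multiplicity with which $v$ survives the deletion/replacement phases (using the canonical ordering of Remark \ref{rem:expand-order}), observe that a clique of $H$ expands to a clique of size $\sum_{v\in C}s(v)$ in the intermediate graph, and conclude via the fact that edge additions can only increase the clique number. The paper states this more tersely, while you spell out the verification that the descendants of a clique remain a clique, but the underlying argument is identical.
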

\begin{proof}
    Note that operation (III) in the expand operations can only increase the size of the maximum clique. Thus,
after performing operations of type (I) and (II) on $H$, we cannot reach a graph whose  clique number is larger  $\omega(G)$. 
Assume that in operations (I) and (II) we replace every vertex $v$ of a graph $H$ with $s(v)$ copies ($s(v)=0$ is equivalent to removing the vertex, and $s(v)=1$ is equivalent with keeping the vertex untouched while $s(v)>1$ is equivalent to replicating the vertex). Then, for every clique $C$ in $H$ we must have
$\sum_{v\in C}s(v)\leq \omega(G).$
On the other hand, we must have $|\mathcal{V}(G)|=\sum_{v\in \mathcal{V}(H)}s(v)$ as $\sum_{v\in \mathcal{V}(H)}s(v)$ is the total number of vertices after performing operations of type (I) and (II).
\end{proof}

\begin{lemma}
    We have $G\in Expand(H)$ if and only if there exists some $f \in \mathcal{F}(H, G)$ such that $|f(v)| \geq 1$ for all  $v \in \mathcal{V}(G)$.
\end{lemma}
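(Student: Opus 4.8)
The plan is to prove the two implications separately: the forward direction ($G\in Expand(H)\Rightarrow$ the existence of $f$) is a short bookkeeping argument, while the converse is where the real work lies. Throughout I would freely use Remark~\ref{rem:expand-order} (normalizing the order of operations) and Remark~\ref{mrk:merge} (that merging/contracting vertices is a legal composition of operations (I) and (III)).

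For the forward direction, I would invoke Remark~\ref{rem:expand-order} to assume $G$ is reached from $H$ by first deleting some vertices, then replacing vertices by cliques, and finally adding edges. Let $\phi:\mathcal V(G)\to\mathcal V(H)$ send each vertex of $G$ to the surviving vertex of $H$ from which it descends, and set $f(v)=\{\phi(v)\}$. Each such set is a one-element independent set, so $|f(v)|=1\ge 1$. It then remains to verify $f\in\mathcal F(H,G)$: if $v_1,v_2$ are non-adjacent in $G$ they cannot be two copies of the same original vertex (copies form a clique, which survives edge additions), so $\phi(v_1)\ne\phi(v_2)$; moreover $\phi(v_1)\phi(v_2)\notin\mathcal E(H)$, since an $H$-edge between surviving vertices is kept in the induced subgraph and then realized between all their copies, contradicting non-adjacency in $G$. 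Hence $\{\phi(v_1)\}$ and $\{\phi(v_2)\}$ are disjoint with independent union, i.e. disconnected in $H$ in the sense of Definition~\ref{def2sep}.

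For the converse, given $f\in\mathcal F(H,G)$ with $|f(v)|\ge 1$ for all $v$, I would build $G$ from $H$ explicitly. First delete (operation (I)) every $h\in\mathcal V(H)$ lying in no set $f(v)$. Next, for each remaining $h$, apply operation (II) to replace it by a clique having one copy $h^{(v)}$ for each $v$ with $h\in f(v)$ (a clique of size one, i.e.\ no change, when $h$ is used only once). Then, for each $v\in\mathcal V(G)$, contract the set $P_v=\{h^{(v)}:h\in f(v)\}$ into a single vertex, performing the contraction pairwise as in Remark~\ref{mrk:merge}. Since each $f(v)$ is non-empty, every $P_v$ is non-empty, so the resulting graph $G'$ has exactly one vertex per $v\in\mathcal V(G)$, i.e.\ $\mathcal V(G')=\mathcal V(G)$.

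The crux is computing the edges of $G'$. For $v_1\ne v_2$, the merged vertices are adjacent in $G'$ iff some copy in $P_{v_1}$ is adjacent (in the replicated graph) to some copy in $P_{v_2}$; and $h_1^{(v_1)}$ is adjacent to $h_2^{(v_2)}$ iff $h_1=h_2$ (clique of copies) or $h_1h_2\in\mathcal E(H)$. Thus $v_1v_2\in\mathcal E(G')$ precisely when $f(v_1)\cap f(v_2)\ne\emptyset$ or there is an $H$-edge between $f(v_1)$ and $f(v_2)$, that is, exactly when $f(v_1)$ and $f(v_2)$ are \emph{not} disconnected in $H$. The defining property of $\mathcal F(H,G)$ states that non-adjacency in $G$ forces disconnection, whose contrapositive yields $\mathcal E(G')\subseteq\mathcal E(G)$. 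Hence $G'$ is a spanning subgraph of $G$, and adding the missing edges via operation (III) produces $G$, so $G\in Expand(H)$. I expect the main obstacle to be precisely this adjacency bookkeeping in the contracted graph—tracking which edges survive contraction and matching them against the ``not disconnected'' condition—together with the care needed to realize the multi-vertex contraction as a legitimate sequence of Expand operations.
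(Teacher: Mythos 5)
Your proof is correct. The forward direction is essentially identical to the paper's argument: normalize the order of operations via Remark~\ref{rem:expand-order}, send each vertex of $G$ to the singleton of its ancestor in $H$, and check the disconnectedness condition using the fact that Expand operations never delete edges. The converse, however, takes a genuinely different route. The paper's proof begins with a one-line reduction -- for each $v$, arbitrarily delete elements of $f(v)$ until a single vertex remains (legitimate because disconnectedness is inherited by subsets) -- after which the sets $f^{-1}(u)$ partition $\mathcal{V}(G)$ into cliques and the construction is just ``delete unused vertices, blow up each used $u$ into a clique of size $|f^{-1}(u)|$, add edges.'' You instead keep the full sets $f(v)$, replicate each used $h\in\mathcal{V}(H)$ into one copy per $v$ with $h\in f(v)$, and then contract each fiber $P_v$ via Remark~\ref{mrk:merge}; the adjacency bookkeeping you carry out (merged vertices adjacent iff $f(v_1)\cap f(v_2)\neq\emptyset$ or an $H$-edge joins $f(v_1)$ to $f(v_2)$, i.e.\ iff the two sets are not disconnected) is exactly right, since each $f(v)$ is itself independent so ``union not independent'' can only be witnessed by a cross edge. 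Both arguments are sound; the paper's pruning trick buys a shorter proof with no contraction analysis, while your version has the minor virtue of exhibiting an Expand-derivation that uses all of $f$ rather than a sub-selection, at the cost of the extra verification that contraction interacts correctly with the replicated cliques.
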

\begin{proof}
    Assume that $G\in Expand(H)$. By Remark \ref{rem:expand-order}, every vertex $v$ in $G$ is constructed by replacing some vertex $u \in \mathcal{V}(H)$ with a clique (the clique is of size one when we just keep the vertex in $H$ and not replicate it). We now produce a function  $f \in \mathcal{F}(H, G)$ as follows: if a vertex $v \in \mathcal{V}(G)$ is constructed by replacing $u \in \mathcal{V}(H)$ with a clique, we set $f(v) = \{ u\}$. The function $f$ created in this way satisfies the conditions of $\mathcal{F}(H, G)$. To see this, assume that $v, w \in \mathcal{V}(G)$ are disconnected, then they cannot be obtained from two connected vertices in $H$ as we do not delete edges during the expand operations.

Conversely, assume that there exists some $f \in \mathcal{F}(H, G)$, such that $|f(v)| \geq 1$ for all  $v \in \mathcal{V}(G)$. For any $v \in \mathcal{V}(G)$, arbitrarily delete vertices from $f(v)$ until only one remains. Now, to obtain $G$ from $H$ using the Expand operations, we first remove vertices of $H$ that do not appear in $f(v)$ for all $v$. Then, we  substitute each vertex $u \in \mathcal{V}(H)$ with a clique of size $|f^{-1}(u)|$. Adding the remaining edges would construct $G$ from $H$. This can be done because the sets $|f^{-1}(u)|$ construct a partition of the vertices in $G$.
\end{proof}

Recall the definition of $\Gamma_0(G,H)=\min_{f} \max_{v\in\mathcal V(G)} \frac{1}{|f(v)|}$. By the above lemma, $G \in Expand(H)$ if and only if $\Gamma_0(G,H) \leq 1$. Thus, we observe that for all graphs satisfying the assumptions of Theorem \ref{Thm:bellowcayley}, including cycles and perfect graphs, $\alpha^*(G|H)=\Gamma_1(G,H) \leq 1$ if and only if $\Gamma_0(G,H) \leq 1$. However, Example \ref{example1d} provides an example in which $\Gamma_0(G,H) =\infty$ while $\alpha^*(G|H) \leq 1$.

\section{Applications of the relative fractional independence number}

\label{sec:app}

In this section, we explore some applications of $\alpha^*(G|H)$.

\subsection{Bounds on the independence number}
\label{app3ttt}
In \cite{bohman2}, the structure of odd cycles is used to show that  for any natural number $d$, $m\geq 3$ and even number $\beta>0$,  \begin{align}\alpha(C^d_{m+\beta})\geq \alpha(C^d_{m}){\left(\dfrac{m+\beta}{m}\right)}^d\label{eqnlast}
\end{align}
where $C^d_{m}$ is the $d$-th power of $C_m$. One can also obtain this result directly using the No-Homomorphism lemma. In Appendix A, we prove that $\alpha^*(C_n|C_m)=n/m$ for odd $m$ and $n$ satisfying $n<m$. Thus, we recover the result of \cite{bohman2}. More generally, observe that for any natural number $k$, 
$$(n/m)^k=\frac{\alpha^*(C_n^{\boxtimes k})}{\alpha^*(C_m^{\boxtimes k})}\leq \alpha^*(C_n^{\boxtimes k}|C_m^{\boxtimes k})\leq (\alpha^*(C_n|C_m))^k=(n/m)^k.$$  Thus, for odd $m$ and $n$ satisfying $n<m$, we deduce $$\alpha^*(C_n^{\boxtimes k}|C_m^{\boxtimes k})=(n/m)^k, \qquad \forall k.$$
So, we conclude that $n/m$ is the fundamental limit of the ratio one can obtain by the technique of assigning independent sets of $C_m^{\boxtimes k}$ to vertices of $C_n^{\boxtimes k}$ for any arbitrary $k$. 

In the next sub-section, we discuss the more general class of Cayley graphs and compute $\alpha^*(G|H)$. This provides a different generalization of \eqref{eqnlast}.

By Theorem \ref{thm3}, the inequality $$\frac{\mathscr{C}(G)}{\mathscr{C}(H)}\leq \alpha^*(G|H)$$ yields a non-trivial upper bound on the ratio of Shannon number of $G$
and $H$. One can compare this bound by the  individual bounds on $\mathscr{C}(G)$ and $\mathscr{C}(H)$. In particular, we can write
\begin{align}\frac{\mathscr{C}(G)}{\mathscr{C}(H)}\leq \frac{\mathscr{C}(G)}{(\alpha(H^d))^{1/d}}\label{eqnReT}
\end{align}
for any natural number $d$ and then use a known upper bound on $\mathscr{C}(G)$ (such as the Lov\'{a}sz number) to compute an upper bound on the ratio of the capacities of $G$ and $H$. However, calculating $\alpha(H^d)$ for large values of $d$ is computationally difficult, and choosing small values of $d$ can lead to weak bounds. As an example, for a small graph like $C_7$, it is known that $\alpha(C^3_7)=33$, and for the larger values of $d$, we only know the bounds $108\leq\alpha(C^4_7)\leq 115$ and $367\leq\alpha(C^5_7)\leq 401$ \cite{polak}. Since $\frac{\mathscr{C}(C_9)}{\mathscr{C}(C_{11})}\leq \alpha^*(C_9|C_{11})=\frac{9}{11}$, while the best we can have using the ratio of capacities is that $\mathscr{C}(C_9)\leq 4.3601$ (Lovasz bound) and $\mathscr{C}(C_{11})\geq 148^{\frac{1}{3}} > 5.2895$ \cite{baumert}, so $\frac{\mathscr{C}(C_9)}{\mathscr{C}(C_{11})}\leq \frac{4.3601}{5.2895}$. We have $\frac{9}{11}\leq \frac{4.3601}{5.2895}$.

\subsection{The ratio of Shannon capacities of two Cayley graphs}
\label{sec:newsec}


As a concrete example, we compute $\alpha^*(G|H)$ for two Cayley graphs obtained by cyclic groups. 

\begin{theorem}\label{cayex}
Let $G=Cay(\mathbb{Z}_n, \pm 1,\pm 2,\dots \pm k)$ and $H=Cay(\mathbb{Z}_m, \pm 1,\pm 2,\dots, \pm k)$, where $1\le 2k<n<m$ are integers. Then, $\alpha^*(G|H)\ge \frac{n}{m}$. Moreover, $\alpha^*(G|H)=\frac{n}{m}$ if there are integers $\ell,s\ge 0$ such that
$m=\ell n+s(k+1)$.
\end{theorem}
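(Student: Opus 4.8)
The plan is to exploit that both $G$ and $H$ are vertex-transitive Cayley graphs, so Theorem~\ref{alphavertran} applies and yields the clean identity $\alpha^*(G|H)=n/\alpha(G^c\boxtimes H)$. The two assertions then become, respectively, an upper bound and a lower bound on $\alpha(G^c\boxtimes H)$, and I would prove each separately.

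For the inequality $\alpha^*(G|H)\ge n/m$ I would avoid $G^c\boxtimes H$ and instead invoke the general bound \eqref{eqnNNn1}, namely $\alpha^*(G|H)\ge \alpha^*(G)/\alpha^*(H)$. Since $G$ is vertex-transitive, Lemma~\ref{lemma3ns} gives $\alpha^*(G)=|\mathcal V(G)|/\alpha(G^c)=n/\omega(G)$; and because $2k<n$ one checks that every $(k+1)$-clique of $Cay(\mathbb Z_n,\pm1,\dots,\pm k)$ is a block $\{a,a+1,\dots,a+k\}$ and that this is largest, so $\omega(G)=k+1$ and $\alpha^*(G)=n/(k+1)$. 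The identical computation (using $2k<n<m$) gives $\alpha^*(H)=m/(k+1)$, whence $\alpha^*(G)/\alpha^*(H)=n/m$. This settles the first claim, and combined with $\alpha^*(G|H)=n/\alpha(G^c\boxtimes H)$ it also records the bound $\alpha(G^c\boxtimes H)\le m$ that I will need below.

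For the equality, by the last part of Theorem~\ref{alphavertran} it suffices to produce an assignment of independent sets $\mathcal T_0,\dots,\mathcal T_{n-1}\in\mathcal I(H)$ to the vertices $0,\dots,n-1$ of $G$, with $\mathcal T_i,\mathcal T_j$ disconnected in $H$ (Definition~\ref{def2sep}) whenever the cyclic distance of $i,j$ in $\mathbb Z_n$ exceeds $k$, and with $\sum_i|\mathcal T_i|=m$; together with $\alpha(G^c\boxtimes H)\le m$ this forces $\alpha(G^c\boxtimes H)=m$ and hence $\alpha^*(G|H)=n/m$. I would obtain such an assignment from a single function $\phi:\mathbb Z_m\to\mathbb Z_n$ by setting $\mathcal T_i=\phi^{-1}(i)$; this automatically gives $\sum_i|\mathcal T_i|=m$, and a short argument shows the assignment is valid \emph{precisely} when $\phi$ satisfies the window condition: for every $x$, the values $\phi(x),\phi(x+1),\dots,\phi(x+k)$ are distinct and form a block $\{a,a+1,\dots,a+k\}$ of $k+1$ consecutive residues modulo $n$ (a maximum clique of $G$). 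Indeed, this condition makes each fiber independent in $H$ and is equivalent to the implication that if the cyclic distance of $x,y$ in $\mathbb Z_m$ is at most $k$ then $\phi(x)\neq\phi(y)$ and the cyclic distance of $\phi(x),\phi(y)$ in $\mathbb Z_n$ is at most $k$, which is exactly the required disconnection.

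Using $m=\ell n+s(k+1)$, I would define $\phi$ on $\{0,1,\dots,m-1\}$ piecewise: $\phi(x)=x\bmod(k+1)$ for $0\le x<s(k+1)$, and $\phi(x)=\bigl(x-s(k+1)\bigr)\bmod n$ for $s(k+1)\le x<m$. On the first block $\phi$ cycles through $\{0,\dots,k\}$ exactly $s$ times, and on the second it advances by $1$ around $\mathbb Z_n$ exactly $\ell$ times; both pieces satisfy the window condition internally. The crux, and the step I expect to require the most care, is verifying the window condition at the two junctions (at $x=s(k+1)$ and at the wrap-around $x=m-1\to 0$): one checks that the value sequence reads $\dots,k-1,k,0,1,2,\dots$ across the first junction and $\dots,n-1,0,1,\dots$ across the wrap, so that every straddling window of length $k+1$ is again a set of $k+1$ consecutive residues, i.e.\ a block. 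The degenerate cases $s=0$ (pure advancing, with $m=\ell n$ and $\ell\ge 1$ since $m>n$) and $\ell=0$ (pure cycling, $m=s(k+1)$) must also be noted but are immediate specializations of the same check. Verifying these window conditions is essentially the entire content of the equality direction; the lower bound and the reduction to the construction are routine given the earlier results.
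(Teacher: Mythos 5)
Your proposal is correct and follows essentially the same route as the paper: the lower bound comes from $\alpha^*(G|H)\ge\alpha^*(G)/\alpha^*(H)=n/m$ using $\omega=k+1$, and your piecewise map $\phi$ (cycle through a $(k+1)$-block $s$ times, then advance around $\mathbb{Z}_n$ $\ell$ times) is exactly the paper's homomorphism $\mathsf{g}:H\rightarrow G$ with the two pieces in the opposite order, with your fiber assignment $\mathcal T_i=\phi^{-1}(i)$ being precisely the inverse-homomorphism independent set of Lemma~\ref{hn}. One small inaccuracy, harmless to the argument but worth noting: it is not true that every $(k+1)$-clique of $Cay(\mathbb{Z}_n,\pm1,\dots,\pm k)$ is a block of consecutive residues (e.g.\ $\{0,2,4\}$ in $Cay(\mathbb{Z}_6,\pm1,\pm2)$), and indeed $\omega(G)=k+1$ itself requires $n\ge 2k+2$ rather than $n>2k$ (for $n=2k+1$ the graph is complete); only the sufficiency direction of your ``window condition'' and the value $\omega(G)=k+1$ are actually used, and the paper asserts the latter without proof under the same hypothesis.
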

Proof of the above theorem is given in Section \ref{proofcayex}. It utilizes the idea of defining a homomorphism from $H$ to $G$. Theorem \ref{cayex} implies $$\frac{\mathscr C (Cay(\mathbb{Z}_n, \pm 1,\pm 2,\dots \pm k))}{\mathscr C ( Cay(\mathbb{Z}_m, \pm 1,\pm 2,\dots \pm k))}\leq \frac{n}{m}$$ for $m=\ell n+s(k+1)$ for some integers $\ell,s\ge 0$. This inequality is an explicit upper bound on the ratio of the capacities of Cayley graphs. We believe this result is novel. 
To compare this bound with the previously known bounds, one can use individual bounds on $\mathscr{C}(G)$ and $\mathscr{C}(H)$ to write the following upper bound.
In \cite{bad}, it is proved that 
$$\alpha(Cay(\mathbb{Z}_n, \pm 1,\pm 2,\dots \pm k)\boxtimes Cay(\mathbb{Z}_n, \pm 1,\pm 2,\dots \pm k))=\left\lfloor \frac{\left\lfloor \frac{n}{k+1}\right\rfloor n}{k+1}\right\rfloor.$$
So, $$\frac{\mathscr C (Cay(\mathbb{Z}_n, \pm 1,\pm 2,\dots k))}{\mathscr C ( Cay(\mathbb{Z}_m, \pm 1,\pm 2,\dots k))}
\leq \frac{\alpha^* (Cay(\mathbb{Z}_n, \pm 1,\pm 2,\dots \pm k))}{\alpha(Cay(\mathbb{Z}_m, \pm 1,\pm 2,\dots \pm k)^2)}=\frac{n/(k+1)}{\sqrt{\left\lfloor \frac{\left\lfloor \frac{m}{k+1}\right\rfloor m}{k+1}\right\rfloor}},$$ which is strictly greater than $\frac{n}{m}$ if $m$ is not divisible by $k+1$.

\subsection{Homomorphism and the relative fractional independence number}
\label{nohomosection}


The well-known No-Homomorphism Lemma states that\footnote{See  \cite[ Exercise 2.12]{hammack} for an extension of the No-Homomorphism Lemma based on the maximum number of vertices in an induced sub-graph of $G$ that is homomorphic to an auxiliary graph $K$.}:
\begin{lemma}\cite{Albertson}
\label{nohomo}
 If there is a homomorphism from $H$ to $G$, and $G$ is vertex-transitive, then $\frac{\alpha(G)}{\alpha(H)}\leq \frac{|\mathcal V(G)|}{|\mathcal V(H)|} $.
\end{lemma}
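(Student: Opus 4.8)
The statement is the classical No-Homomorphism Lemma, and the plan is to prove it by an averaging argument over the automorphism group of $G$, using vertex-transitivity. First I would fix a maximum independent set $\mathcal A\subseteq\mathcal V(G)$ with $|\mathcal A|=\alpha(G)$. The structural fact I would isolate at the outset is that pulling an independent set back along a homomorphism keeps it independent: for any homomorphism $\mathsf g:H\to G$ and any independent set $\mathcal I$ of $G$, the preimage $\mathsf g^{-1}(\mathcal I)$ is independent in $H$, since an edge $uv\in\mathcal E(H)$ would force $\mathsf g(u)\mathsf g(v)\in\mathcal E(G)$ and contradict the independence of $\mathcal I$ once $\mathsf g(u),\mathsf g(v)\in\mathcal I$. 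In particular $|\mathsf g^{-1}(\mathcal I)|\leq\alpha(H)$ for every independent $\mathcal I$.

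Next I would let $\sigma$ be a uniformly random automorphism of $G$ and apply this bound to $\mathcal I=\sigma(\mathcal A)$, which is again a maximum independent set. The heart of the argument is the identity $\Pr[x\in\sigma(\mathcal A)]=\alpha(G)/|\mathcal V(G)|$ for each fixed $x\in\mathcal V(G)$, and this is precisely where vertex-transitivity enters: by the orbit-stabilizer theorem a fixed $x$ is the image of a fixed $a\in\mathcal A$ under exactly $|\mathrm{Aut}(G)|/|\mathcal V(G)|$ automorphisms independently of $a$, and the events over distinct $a$ are disjoint because $\sigma$ is injective. Summing over $\mathcal V(H)$ by linearity of expectation then gives
\[
\mathbb E\big[\,|\mathsf g^{-1}(\sigma(\mathcal A))|\,\big]=\sum_{w\in\mathcal V(H)}\Pr[\mathsf g(w)\in\sigma(\mathcal A)]=|\mathcal V(H)|\cdot\frac{\alpha(G)}{|\mathcal V(G)|}.
\]
Since every realization obeys $|\mathsf g^{-1}(\sigma(\mathcal A))|\leq\alpha(H)$, the expectation is at most $\alpha(H)$, and rearranging yields $\alpha(G)/\alpha(H)\leq|\mathcal V(G)|/|\mathcal V(H)|$.

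I expect the only real obstacle to be the clean justification of $\Pr[x\in\sigma(\mathcal A)]=\alpha(G)/|\mathcal V(G)|$; the remaining steps are the pullback observation and linearity of expectation. Care is needed to confirm that transitivity yields a count of automorphisms sending a given vertex to $x$ that does not depend on $x$, which is what makes the per-vertex probability uniform.

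Alternatively, and more in the spirit of this paper, the lemma drops out of the machinery already in hand. Taking $W$ to be a single vertex in the definition of $\alpha^*(G|H)$ gives $\alpha^*(G|H)\geq\alpha(G)/\alpha(H)$, while Theorem \ref{alphavertran} gives $\alpha^*(G|H)=|\mathcal V(G)|/\alpha(G^c\boxtimes H)$ for vertex-transitive $G$. It then suffices to show $\alpha(G^c\boxtimes H)\geq|\mathcal V(H)|$, for which I would exhibit the explicit independent set $\{(\mathsf g(v),v):v\in\mathcal V(H)\}$ in $G^c\boxtimes H$: two distinct such vertices can be adjacent only if $uv\in\mathcal E(H)$, which forces $\mathsf g(u)\mathsf g(v)\in\mathcal E(G)$ and hence $\mathsf g(u)\neq\mathsf g(v)$ with $\mathsf g(u)\mathsf g(v)\notin\mathcal E(G^c)$, ruling out every adjacency case. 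Chaining the two bounds gives $\alpha(G)/\alpha(H)\leq\alpha^*(G|H)\leq|\mathcal V(G)|/|\mathcal V(H)|$.
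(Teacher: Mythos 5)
Both of your arguments are correct. The paper states Lemma \ref{nohomo} only as a cited classical result and does not reprove it, so the relevant comparison is with the surrounding machinery. Your first argument is the standard Albertson--Collins averaging proof and it is sound: the pullback of an independent set along a homomorphism is independent (an edge $uv\in\mathcal E(H)$ forces $\mathsf{g}(u)\mathsf{g}(v)\in\mathcal E(G)$, so $\mathsf{g}(u)\neq\mathsf{g}(v)$ and the two images cannot both lie in the set), and vertex-transitivity together with orbit--stabilizer makes $\Pr[x\in\sigma(\mathcal A)]=\alpha(G)/|\mathcal V(G)|$ uniform in $x$, after which linearity of expectation closes the argument. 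Your second argument is precisely the route the paper takes for its strengthening of the lemma: it is the combination of $\alpha^*(G|H)\geq\alpha(G)/\alpha(H)$ (one-vertex $W$), Theorem \ref{alphavertran}, and the independent set $\{(\mathsf{g}(v),v):v\in\mathcal V(H)\}$ in $G^c\boxtimes H$, which is exactly Lemma \ref{hn} feeding into Lemma \ref{ournohomo}. The trade-off between the two is worth noting: the averaging proof is elementary and self-contained but bounds only the ratio $\alpha(G)/\alpha(H)$, whereas the $\alpha^*$ route leans on the heavier Theorem \ref{alphavertran} but delivers the stronger conclusion $\alpha^*(G|H)\leq|\mathcal V(G)|/|\mathcal V(H)|$, which via Theorem \ref{thm3} then bounds the ratio of every invariant $X$ covered there (Shannon capacity, Lov\'asz number, etc.), not just the independence number.
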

We can use No-Homomorphism Lemma to show the non-existence of homomorphism from $H$ to $G$.
Also, if we show that there is a homomorphism from $H$ to $G$,
then we can use No-Homomorphism Lemma to give an upper bound on the $\alpha(G)$ (See \cite{albertson1}).
We give an extension of the No-Homomorphism Lemma using the relative fractional independence number.
\begin{lemma}\label{hn}
    If $G$ and $H$ are two graphs where there is a homomorphism $\mathsf{g}:H\rightarrow G$ then $\alpha(G^c\boxtimes H)\ge |\mathcal V(H)|$.
\end{lemma}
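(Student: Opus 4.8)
The plan is to exhibit an explicit independent set in $G^c\boxtimes H$ of size exactly $|\mathcal V(H)|$, built directly from the homomorphism. The natural candidate is the ``graph of $\mathsf g$'', namely
$$\mathcal S=\{(\mathsf g(v),v): v\in\mathcal V(H)\}\subseteq \mathcal V(G)\times\mathcal V(H)=\mathcal V(G^c\boxtimes H).$$
Since the second coordinate ranges over all of $\mathcal V(H)$ and distinguishes the elements, $\mathcal S$ contains exactly $|\mathcal V(H)|$ distinct vertices, regardless of whether $\mathsf g$ is injective. So it remains only to verify that $\mathcal S$ is independent in $G^c\boxtimes H$.

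To do this I would take two distinct elements $(\mathsf g(u),u)$ and $(\mathsf g(v),v)$ with $u\neq v$, and check against the three adjacency conditions in the strong product: adjacency requires either (a) equal first coordinates together with $uv\in\mathcal E(H)$, or (b) $\mathsf g(u)\mathsf g(v)\in\mathcal E(G^c)$ together with $u=v$, or (c) $\mathsf g(u)\mathsf g(v)\in\mathcal E(G^c)$ together with $uv\in\mathcal E(H)$. Condition (b) is immediately ruled out because $u\neq v$. The whole argument then reduces to showing that whenever $uv\in\mathcal E(H)$, the first coordinates $\mathsf g(u),\mathsf g(v)$ are \emph{not} adjacent in $G^c$, which kills conditions (a) and (c). This is exactly where the homomorphism hypothesis enters: $uv\in\mathcal E(H)$ forces $\mathsf g(u)\mathsf g(v)\in\mathcal E(G)$ (and in particular $\mathsf g(u)\neq\mathsf g(v)$, since $G$ has no loops), hence $\mathsf g(u)\mathsf g(v)\notin\mathcal E(G^c)$ by definition of the complement.

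The one case that deserves a second look is when $\mathsf g(u)=\mathsf g(v)$ for distinct $u,v$ (possible if $\mathsf g$ is not injective). Then the first coordinates are equal, so no $G^c$-edge exists between them (no loops), immediately ruling out (b) and (c); and for (a) we would additionally need $uv\in\mathcal E(H)$, but $uv\in\mathcal E(H)$ is impossible here precisely because it would force $\mathsf g(u)\neq\mathsf g(v)$ by the loop-free homomorphism property. Thus in every case the two vertices are non-adjacent, $\mathcal S$ is independent, and $\alpha(G^c\boxtimes H)\ge|\mathcal S|=|\mathcal V(H)|$.

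I do not expect any serious obstacle: the proof is a direct construction, and the only place to be careful is bookkeeping in the case analysis, especially the interaction between the loop-free homomorphism condition and the complementation $G\mapsto G^c$ (edges of $H$ become non-edges of $G^c$ under $\mathsf g$). This dovetails with the earlier remark that the ``inverse'' $\mathsf g^{-1}:\mathcal V(G)\to\mathcal I(H)$ lies in $\mathcal F(H,G)$ and yields the bound $\alpha^*(G|H)\le |\mathcal V(G)|/|\mathcal V(H)|$ when $G$ is vertex-transitive.
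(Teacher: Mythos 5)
Your proposal is correct and is exactly the paper's argument: the paper's proof is the single sentence that the collection $(\mathsf{g}(u),u)$ for $u\in\mathcal V(H)$ forms an independent set of size $|\mathcal V(H)|$ in $G^c\boxtimes H$, and your write-up simply spells out the case analysis verifying independence. No gaps; the only difference is the level of detail.
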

\begin{proof}
    Note that the collection of vertices $(\mathsf{g}(u),u)$ in $G^c\boxtimes H$ for all $u\in \mathcal V(H)$ is an independent set of size $|\mathcal V(H)|$.
\end{proof}

\begin{lemma}
\label{ournohomo}
If there is a homomorphism from $H$ to $G$, and $G$ is vertex-transitive, then $\alpha^*(G|H)\leq \frac{|\mathcal V(G)|}{|\mathcal V(H)|} $.
\end{lemma}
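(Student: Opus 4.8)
The plan is to combine the vertex-transitive characterization of $\alpha^*(G|H)$ from Theorem \ref{alphavertran} with the homomorphism-based lower bound on $\alpha(G^c\boxtimes H)$ given in Lemma \ref{hn}. Since $G$ is vertex-transitive, Theorem \ref{alphavertran} tells us that the supremum defining $\alpha^*(G|H)$ is attained at $W=G^c$ and yields the closed form
$$\alpha^*(G|H)=\frac{|\mathcal V(G)|}{\alpha(G^c\boxtimes H)}.$$
Thus the whole question reduces to producing a good \emph{lower} bound on the denominator $\alpha(G^c\boxtimes H)$.

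Next I would invoke Lemma \ref{hn}: a homomorphism $\mathsf{g}:H\to G$ produces the set $\{(\mathsf{g}(u),u):u\in\mathcal V(H)\}$, which is independent in $G^c\boxtimes H$ and has size $|\mathcal V(H)|$, so that $\alpha(G^c\boxtimes H)\ge |\mathcal V(H)|$. The only thing to keep in mind here is why that set is independent: two distinct vertices $(\mathsf{g}(u),u)$ and $(\mathsf{g}(u'),u')$ can be adjacent in the strong product only if their $G^c$-coordinates and $H$-coordinates are jointly compatible with an edge; but whenever $uu'\in\mathcal E(H)$ the homomorphism forces $\mathsf{g}(u)\mathsf{g}(u')\in\mathcal E(G)$, i.e. a \emph{non}-edge of $G^c$, so the two coordinates cannot simultaneously support an edge of $G^c\boxtimes H$. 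This is exactly the content already established in Lemma \ref{hn}.

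Finally I would substitute this bound into the closed form: since $\alpha(G^c\boxtimes H)\ge |\mathcal V(H)|$ and the numerator $|\mathcal V(G)|$ is fixed, dividing gives
$$\alpha^*(G|H)=\frac{|\mathcal V(G)|}{\alpha(G^c\boxtimes H)}\le \frac{|\mathcal V(G)|}{|\mathcal V(H)|},$$
which is the claimed inequality. There is essentially no remaining obstacle: all the genuine work has been absorbed into Theorem \ref{alphavertran} (the optimality of $W=G^c$ for vertex-transitive $G$) and Lemma \ref{hn} (the homomorphism-to-independent-set construction), so the present lemma is just their composition. The one point deserving attention is that vertex-transitivity of $G$ is precisely what upgrades Theorem \ref{alphavertran} from the inequality $\alpha^*(G|H)\ge |\mathcal V(G)|/\alpha(G^c\boxtimes H)$ to the \emph{equality} used above; without that hypothesis the bound would point in the wrong direction and the argument would break.
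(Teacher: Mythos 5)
Your proposal is correct and follows exactly the paper's own argument: apply the vertex-transitive equality $\alpha^*(G|H)=|\mathcal V(G)|/\alpha(G^c\boxtimes H)$ from Theorem \ref{alphavertran} and the bound $\alpha(G^c\boxtimes H)\ge |\mathcal V(H)|$ from Lemma \ref{hn}. Nothing is missing.
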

\begin{proof}
Since $G$ is vertex-transitive, by Theorem \ref{alphavertran},
$ \alpha^*(G|H)=\frac{|\mathcal V(G)|}{\alpha(G^c\boxtimes H)}$.
By Lemma \ref{hn}, since there is a homomorphism from $H$ to $G$,
we have $\alpha(G^c\boxtimes H)\ge |\mathcal V(H)|$.

\end{proof}

We have the following corollary by Lemma \ref{ournohomo} and Theorem \ref{thm3}.
\begin{corollary}
    If there is a homomorphism from $H$ to $G$, and $G$ is vertex-transitive, then $\frac{X(G)}{X(H)}\leq \frac{|\mathcal V(G)|}{|\mathcal V(H)|} $, where $X(G)$ can be the  Shannon
capacity of $G$, the fractional independence number of $G$, the Lov\'{a}sz number of $G$, Schrijver's
or Szegedy's variants of the Lov\'{a}sz number.

\end{corollary}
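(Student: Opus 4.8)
The plan is to obtain the corollary by a one-step chaining of the two immediately preceding results, with $\alpha^*(G|H)$ serving as the bridge. The guiding observation is that under the hypotheses the relative fractional independence number is simultaneously an \emph{upper} bound on every ratio $X(G)/X(H)$ appearing in the statement and a quantity that is itself bounded above by $|\mathcal V(G)|/|\mathcal V(H)|$. So the whole argument reduces to sandwiching $X(G)/X(H)$ between these two inequalities.

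Concretely, I would first secure the inequality $X(G)/X(H)\leq \alpha^*(G|H)$. For $X$ equal to the fractional independence number, the Lov\'asz number, or Schrijver's or Szegedy's variant, this is exactly the left-hand inequality of Theorem \ref{thm3}. The only listed invariant not covered verbatim by Theorem \ref{thm3} is the Shannon capacity, and here I would instead invoke Corollary \ref{ncor2n} (equivalently the defining chain \eqref{eqnv31dNN}), which already records
$$
\alpha^*(G|H)\;\geq\;\frac{\mathscr{C}(G)}{\mathscr{C}(H)}.
$$
In every case we thus have $X(G)/X(H)\leq \alpha^*(G|H)$.

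Next I would apply Lemma \ref{ournohomo}: since there is a homomorphism from $H$ to $G$ and $G$ is vertex-transitive, its hypotheses are met, yielding $\alpha^*(G|H)\leq |\mathcal V(G)|/|\mathcal V(H)|$. Combining the two displayed inequalities gives the claim $X(G)/X(H)\leq |\mathcal V(G)|/|\mathcal V(H)|$ for each admissible $X$, which is precisely the stated strengthening of the No-Homomorphism Lemma from $\alpha$ to all these invariants.

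There is essentially no analytic obstacle here, since both ingredients are already proved earlier; the corollary is a formal consequence. The single point requiring care is bookkeeping of which invariant comes from which result: Theorem \ref{thm3} supplies the fractional independence number and the Lov\'asz-type quantities, whereas the Shannon capacity must be routed through Corollary \ref{ncor2n}. Once that matching is made explicit, the proof is complete.
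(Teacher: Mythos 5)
Your proof is correct and takes essentially the same route as the paper, which obtains the corollary by sandwiching $X(G)/X(H)$ between Theorem \ref{thm3} and Lemma \ref{ournohomo}. Your extra care in routing the Shannon-capacity case through Corollary \ref{ncor2n} (equivalently \eqref{eqnv31dNN}) is warranted, since $\mathscr{C}$ is not among the invariants listed in the statement of Theorem \ref{thm3}, a bookkeeping point the paper's one-line justification glosses over.
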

The inequality 
$$\frac{\alpha^*(G)}{\alpha^*(H)}\leq \frac{|\mathcal V(G)|}{|\mathcal V(H)|}$$
(and similarly for the Lov\'{a}sz number) is novel and cannot be directly derived from the original No-Homomorphism Lemma.

\section{Some open problems}
\label{sec:futurework}

\begin{itemize}
    \item 
    We derived the following inequalities:
    $$\frac{1}{\alpha^*(H|G)} \leq \frac{\alpha^*(G|W)}{\alpha^*(H|W)} \leq \alpha^*(G|H) \qquad \forall \text{ graph }W,$$
    and
    $$\frac{1}{\alpha^*(H|G)}\leq \frac{X(G)}{X(H)} \leq 
    \alpha^*(G|H)
    $$
    where $X(\cdot)$ can be any of the properties described in Theorem \ref{thm3}. One might be interested in characterizing graphs $G$ and $H$ for which $\alpha^*(G|H) \alpha^*(H|G) = 1$ holds, implying that all of the above inequalities hold with equality. 
The equality
$\alpha^*(G|H) \alpha^*(H|G) = 1$
holds if and only if
$$\frac{\alpha(G\boxtimes W)}{\alpha(H\boxtimes W)}=\frac{\alpha(G)}{\alpha(H)}$$
for every graph  $W$. In other words, every graph $W$ should be a maximizer of $\alpha^*(G|H)$. An example that this may occur is as follows: suppose that
$$G = G_1 \boxtimes G', \qquad H = G_1 \boxtimes H'$$
where $G_1$ is some arbitrary graph, and both $G'$ and $H'$ are universal graphs. Then the equality $\alpha^*(G|H) \alpha^*(H|G) = 1$ holds. However, we do not have a full characterization of when the equality $\alpha^*(G|H) \alpha^*(H|G) = 1$ holds in general.

\item Is there any  connections between $\alpha^*(G|H)$ and the fractional Haemers numbers $G$ and $H$? 

\item Is there an efficient algorithm to check whether $G \in Expand(H)$? 

\item 
In this paper, we considered the following expansions:
\begin{align}\alpha(G^{\boxtimes n})=\prod_{i=1}^n 
\frac{\alpha(G^{\boxtimes i})}{\alpha(G^{\boxtimes (i-1)})}
=\prod_{i=1}^n 
\frac{\alpha(G\boxtimes G^{\boxtimes (i-1)})}{\alpha(G^{\boxtimes(i-1)})}
\leq \left[\sup_{W}\frac{\alpha(G\boxtimes W)}{\alpha(W)}\right]^n,\label{eqnRep}
\end{align}
and
\begin{align}\frac{\alpha(G^{\boxtimes n})}{\alpha(H^{\boxtimes n})}
= \prod_{i=1}^n\frac{\alpha(G\boxtimes G^{\boxtimes{i-1}}\boxtimes H^{\boxtimes{n-i}})}{\alpha(H\boxtimes G^{\boxtimes{i-1}}\boxtimes H^{\boxtimes{n-i}})}\leq 
\left[\sup_{W}\frac{\alpha(G\boxtimes W)}{\alpha(H\boxtimes W)}\right]^n.
\label{eqnRep2}
\end{align}
Let $G=C_5$ and $F=C_5^{\boxtimes 2}$.
It is conjectured that
for $C_5$ we have (see  \cite{SONNEMANN1974133})
$$\alpha(G^{\boxtimes n})=\begin{cases}
    5^{\frac{n}{2}}& \text{ if $n$ is even},\\
    2\times 5^{\frac{n-1}{2}}& \text{ if $n$ is odd}.
\end{cases}$$
Consequently, the  limit $\lim_{i\rightarrow\infty}\dfrac{\alpha(G\boxtimes G^{i-1})}{\alpha(G^{i-1})}$
may not exist for $G=C_5$, but the limit certainly exists for $F=C_5^{\boxtimes 2}$. Considering  \eqref{eqnRep} for $F$, the following upper bound is used:
\begin{align}\dfrac{\alpha(F\boxtimes F^{i-1})}{\alpha(F^{i-1})}\leq \sup_{W}\frac{\alpha(F\boxtimes W)}{\alpha(W)}.\label{eqnrrkj3}\end{align}
Observe that for every $i$,
$$\dfrac{\alpha(F\boxtimes F^{i-1})}{\alpha(F^{i-1})}=5, \qquad  \sup_{W}\frac{\alpha(F\boxtimes W)}{\alpha(W)}=\alpha^*(F)=6.25.$$
We obtain a weak upper bound in \eqref{eqnrrkj3} 
since we relax the supremum over all possible graphs $W$. Thus, the upper bounds in
\eqref{eqnRep} and \eqref{eqnRep2} may be improved if we can restrict the set of graphs $W$ that we maximize over.

\end{itemize}


\section{Proofs}
\label{proofsection}
\subsection{Proof of Theorem \ref{thm1}}\label{proofthm1}
Observe that the theorem's second and third parts follow from the first part. For the third part, the dual of the linear program in the first part is as follows: assigning a non-negative weight $\beta_f$ to $f\in \mathcal{F}(H,G)$ and multiplying the equation $\sum_{i=1}^kw_i |f(v_i)|\leq 1$ by $\beta_f$; we obtain the following dual to the linear program of the first part: $\alpha^*(G|H)$ equals the minimum of $\sum_{f}\beta_f$ such that 
$$\sum_{f} \beta_f|f(v)|\geq 1, \qquad \forall v\in \mathcal V(G).$$
Equivalently, $\alpha^*(G|H)$ equals the minimum over $\beta_f$ of
$$\frac{\sum_{f}\beta_f}{\min_{v\in \mathcal V(G)}\sum_{f} \beta_f|f(v)|}.$$
By scaling $\beta_f$, we can impose the constraint $\sum_{f}\beta_f=1$ and view $\beta_f$ as a probability distribution over $f\in \mathcal{F}(H,G)$. Hence, the third part of the theorem follows. The third part implies the second part, as the second part implies that 
$$\frac{1}{\alpha^*(G|H)}=\max \min_{v\in\mathcal V(G)} \mathbb{E}[|F(v)|].$$
Using the minimax theorem and exchanging the order of minimum and maximum, we have
$$\frac{1}{\alpha^*(G|H)}= \min_{w_v\geq 0, \sum_{v}w_v=1} \max_{f}\sum_{v\in V(G)}w_v|f(v)|.$$

It remains to prove the first and fourth parts of the theorem. Below, we provide a proof for these. An alternative proof is given in Appendix B. We begin with the following lemma.
\begin{lemma}\label{lemmaA1}
To compute $\alpha^*(G|H)$, it suffices to take supremum over graphs $W$ with the following structure: the vertex set of $W$ partitions as follows: $\mathcal{V}({W})=\bigcup_{\mathcal S\in \mathcal I(G)} \mathcal{W}_{\mathcal S}$ for some disjoint sets $\mathcal{W}_{\mathcal S}$. The edge structure in $W$ is as follows: for any $\mathcal S_1, \mathcal S_2\in \mathcal I(G)$ vertex $u_1\in \mathcal{W}_{S_1}$ is connected to $u_2\in \mathcal{W}_{\mathcal S_2}$ iff $\mathcal S_1$ and $\mathcal S_2$ are disconnected in $G$. In particular, if $\mathcal S_1=\mathcal S_2$ and $u_1, u_2\in \mathcal{W}_{\mathcal S}$, then $u_1$ and $u_2$ are not connected. Moreover, we have
$$\alpha(W \boxtimes G)=\sum_{\mathcal S\in \mathcal I(G)}|\mathcal{W}_{\mathcal S}|\times |\mathcal S|.$$
\end{lemma}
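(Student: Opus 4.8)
The plan is to prove the lemma by a single edge-completion construction: starting from an arbitrary graph $W$ together with a maximum independent set $\mathcal{B}$ of $W\boxtimes G$, I add edges to $W$ until it acquires the stated structure, while keeping $\alpha(\,\cdot\boxtimes G)$ unchanged and not increasing $\alpha(\,\cdot\boxtimes H)$. This follows the outline given just before Theorem \ref{thm1}.

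First, fix an arbitrary $W$ and a maximum independent set $\mathcal{B}$ of $W\boxtimes G$, so $|\mathcal{B}|=\alpha(W\boxtimes G)$. For each $u\in\mathcal{V}(W)$ set $\mathcal{B}_u=\{v\in\mathcal{V}(G):(u,v)\in\mathcal{B}\}$. Since $(u,v_1)$ and $(u,v_2)$ are adjacent in $W\boxtimes G$ precisely when $v_1v_2\in\mathcal{E}(G)$, the independence of $\mathcal{B}$ forces each $\mathcal{B}_u$ to be an independent set of $G$, i.e. $\mathcal{B}_u\in\mathcal{I}(G)$. I then define the partition $\mathcal{V}(W)=\bigcup_{\mathcal{S}\in\mathcal{I}(G)}\mathcal{W}_\mathcal{S}$ by $\mathcal{W}_\mathcal{S}=\{u:\mathcal{B}_u=\mathcal{S}\}$, and let $W'$ be the graph on $\mathcal{V}(W)$ whose edges are exactly the pairs $u_1\in\mathcal{W}_{\mathcal{S}_1}$, $u_2\in\mathcal{W}_{\mathcal{S}_2}$ for which $\mathcal{S}_1,\mathcal{S}_2$ are disconnected in $G$ (Definition \ref{def2sep}). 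After discarding the useless vertices of $\mathcal{W}_\emptyset$, which lie in no element of $\mathcal{B}$, every remaining class is nonempty and a class cannot be disconnected from itself, so within each class there are no edges; thus $W'$ has exactly the structure claimed.

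The crux is to verify that this completion only adds edges, i.e. $\mathcal{E}(W)\subseteq\mathcal{E}(W')$. I would prove the contrapositive: if $u_1\in\mathcal{W}_{\mathcal{S}_1}$ and $u_2\in\mathcal{W}_{\mathcal{S}_2}$ have $\mathcal{S}_1,\mathcal{S}_2$ not disconnected, then $u_1u_2\notin\mathcal{E}(W)$. Indeed, ``not disconnected'' means either $\mathcal{S}_1\cap\mathcal{S}_2$ contains some $v$, in which case $(u_1,v),(u_2,v)\in\mathcal{B}$ would be adjacent as soon as $u_1u_2\in\mathcal{E}(W)$, or else $\mathcal{S}_1\cup\mathcal{S}_2\notin\mathcal{I}(G)$, yielding $v_1\in\mathcal{S}_1$, $v_2\in\mathcal{S}_2$ with $v_1v_2\in\mathcal{E}(G)$ so that $(u_1,v_1),(u_2,v_2)\in\mathcal{B}$ would be adjacent; in both cases the independence of $\mathcal{B}$ excludes $u_1u_2\in\mathcal{E}(W)$. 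The very same two cases show, conversely, that adding an edge between disconnected classes never creates an adjacency inside $\mathcal{B}$, so $\mathcal{B}$ stays independent in $W'\boxtimes G$.

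Finally I would assemble the three consequences. Because $W'$ is obtained from $W$ by adding edges, $W'\boxtimes G$ and $W'\boxtimes H$ carry supersets of the edges of $W\boxtimes G$ and $W\boxtimes H$ on the same vertex sets, whence $\alpha(W'\boxtimes G)\le\alpha(W\boxtimes G)=|\mathcal{B}|$ and $\alpha(W'\boxtimes H)\le\alpha(W\boxtimes H)$. Combined with the fact that $\mathcal{B}$ is still independent in $W'\boxtimes G$, which gives $\alpha(W'\boxtimes G)\ge|\mathcal{B}|$, I obtain $\alpha(W'\boxtimes G)=|\mathcal{B}|=\sum_u|\mathcal{B}_u|=\sum_{\mathcal{S}\in\mathcal{I}(G)}|\mathcal{W}_\mathcal{S}|\cdot|\mathcal{S}|$, which is simultaneously the asserted formula and the statement that $\{(u,v):u\in\mathcal{W}_\mathcal{S},\,v\in\mathcal{S}\}$ is a maximum independent set of $W'\boxtimes G$. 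Dividing, $\alpha(W'\boxtimes G)/\alpha(W'\boxtimes H)\ge\alpha(W\boxtimes G)/\alpha(W\boxtimes H)$, so every $W$ is dominated by a structured $W'$ and the supremum defining $\alpha^*(G|H)$ may be restricted to structured graphs. The main obstacle is exactly the edge-subset claim $\mathcal{E}(W)\subseteq\mathcal{E}(W')$: everything else is a direct unpacking of the strong-product adjacency rule, but this claim is what guarantees the completion truly only \emph{adds} edges (so the $H$-side inequality points the right way) rather than silently deleting any.
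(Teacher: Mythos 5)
Your proposal is correct and follows essentially the same route as the paper's proof: partition $\mathcal V(W)$ by the fibers $\mathcal B_u$ of a maximum independent set of $W\boxtimes G$, discard the vertices with empty fiber, and saturate the edges between disconnected classes, observing that this keeps $\mathcal B$ independent while only shrinking $\alpha(\cdot\boxtimes H)$. Your explicit verification that $\mathcal E(W)\subseteq\mathcal E(W')$ is exactly the ``independence of $\mathcal B$ forbids edges between non-disconnected classes'' step that the paper states more tersely, so there is nothing to add.
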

\begin{proof}

Take some arbitrary graph $W$. Let $\mathcal B\subseteq \mathcal{V}(W)\times  \mathcal{V}(G)$ be an independent set for $W \boxtimes G$ of maximum size. For every $u\in \mathcal{V}(W)$, let
$\mathcal B_{u}=\{v\in\mathcal{V}(G): (u,v)\in \mathcal B\}.$
If $\mathcal B_{u}=\emptyset$ for some vertex $u$ of $W$, we can simply remove this vertex from $W$. This removal would not affect $\alpha(W\boxtimes G)$ but can potentially reduce $\alpha(W\boxtimes H)$. Therefore, without loss of generality, we can assume that $\mathcal B_{u}\neq \emptyset$ for all vertices $u$ of $W$.

Since $\mathcal B$ is an independent set for $W \boxtimes G$, for every $u\in \mathcal{V}(W)$, $\mathcal B_u$ must be an independent set for $G$. Thus, $\mathcal B_u\in \mathcal I(G)$. 
This leads to the following partition of the vertex set $\mathcal{V}(W)$: for every non-empty $\mathcal S\in \mathcal I(G)$, let
$\mathcal{W}_{\mathcal S}=\{u\in \mathcal{V}(W): \mathcal B_u=\mathcal S\}.$
With this definition, the condition that $\mathcal B\subseteq \mathcal{V}(W)\times  \mathcal{V}(H)$ is an independent set for $W \boxtimes G$ is equivalent with the following condition: take $u_1\in \mathcal{W}_{\mathcal S_1}$ and $u_2\in \mathcal{W}_{\mathcal S_2}$. If $\mathcal S_1$ and $\mathcal S_2$ are not disconnected (\emph{i.e.,} either $\mathcal S_1\cap \mathcal S_2\neq \emptyset$ or there is an edge in $G$ between a vertex in $\mathcal S_1$ and a vertex in $\mathcal S_2$) then there should not be any edge between $u_1, u_2$ in $W$. If
$\mathcal S_1$ and $\mathcal S_2$ are disconnected; there may or may not be edges between vertices in $\mathcal S_1$ and $\mathcal S_2$. We claim that, without loss of generality, we can add edges between the vertices in $\mathcal S_1$ and $\mathcal S_2$ if $\mathcal S_1$ and $\mathcal S_2$ are disconnected since their addition would not affect $\alpha(W \boxtimes G)$ but could potentially decrease $\alpha(W \boxtimes H)$, which is desirable as we want to maximize the ratio $\alpha(W \boxtimes G)/\alpha(W \boxtimes H)$. Thus, the graph $W$ will have the form given in the lemma statement. We also have
$$|\mathcal B|=\sum_{u\in \mathcal{V}(W)}|\mathcal B_u|=\sum_{\mathcal S\in \mathcal I(G)}|\mathcal{W}_{\mathcal S}|\times | \mathcal S|.$$
\end{proof}

Consider a graph $W$ with the structure in Lemma \ref{lemmaA1}. We claim that 
\begin{align}\alpha(W\boxtimes H)=\max_{f\in\mathcal{F}} \sum_{\mathcal S\in \mathcal I(G)} |\mathcal{W}_{\mathcal S}| \times |f(\mathcal S)|\label{eqnA1d}\end{align}
where $\mathcal{F}$ is the set of all functions $f: \mathcal I(G)\mapsto \mathcal I(H)$ such that for any distinct $\mathcal S_1,\mathcal \mathcal S_2\in \mathcal I(G)$, if $\mathcal S_1$ and $\mathcal S_2$ are disconnected in $G$ then $f(\mathcal S_1)$ and $f( \mathcal S_2)$ are disconnected in $H$. 
 First, observe that 
$\alpha(W\boxtimes H)\geq \max_{f\in\mathcal{F}} \sum_{\mathcal S\in \mathcal I(G)} |\mathcal{W}_{\mathcal S}| \times |f(\mathcal S)|.$
To see this, for every function $f\in\mathcal{F}$, 
$\{(u, v)\big| u\in \mathcal{W}_{\mathcal S},  v\in f(\mathcal S)\text{ for some } \mathcal S\in \mathcal I(G)  \}$
is an independent set. On the other hand, take an independent set $\mathcal A$ for $W\boxtimes H$ of maximum size. Take some $\mathcal S\in \mathcal I(G)$ and for every $u\in\mathcal{\mathcal W}_{\mathcal S}$ consider 
$\mathcal A_{u}=\{v\in\mathcal{V}(H): (u,v)\in \mathcal A\}.$
Since $\mathcal A$ is an independent set for $W\boxtimes H$, we have $\mathcal A_u\in \mathcal I(H)$.
Consider $|\mathcal A_{u}|$ for $u\in\mathcal{W}_{\mathcal S}$, and let $u^*\in\mathcal{W}_{\mathcal S}$ have maximum $|\mathcal A_{u^*}|$. For all $u\in\mathcal{W}_{\mathcal S}$, let us replace $\mathcal A_u$ by $\mathcal A_{u^*}$. This transformation will not decrease the size of $\mathcal A$ (as $|\mathcal A_{u^*}|$ had maximum size) and will keep $\mathcal A$ as an independent set for $W\boxtimes H$. If we make this transformation on all  $\mathcal S\in \mathcal I(G)$, we can force all $u\in\mathcal{W}_{\mathcal S}$ mapped to the same independent set in $\mathcal I(H)$, which we can call $f(\mathcal S)\in \mathcal I(H)$. Thus,  the size of the independent set $\mathcal A$ is no more than $\sum_{\mathcal S\in \mathcal I(G)} |\mathcal{W}_{\mathcal S}| \times |f(\mathcal S)|$. Hence, 
$\alpha(W\boxtimes H)\leq \max_{f\in\mathcal{F}} \sum_{\mathcal S\in \mathcal I(G)} |\mathcal{W}_{\mathcal S}| \times |f(\mathcal S)|.$ This implies \eqref{eqnA1d}. To sum this up, we obtain
$$\frac{\alpha(W \boxtimes G)}{\alpha(W\boxtimes H)}=\frac{\sum_{\mathcal S\in \mathcal I(G)}|\mathcal{W}_{\mathcal S}|\times |\mathcal S|}{\max_{f\in\mathcal{F}} \sum_{\mathcal S\in \mathcal I(G)} |\mathcal{W}_{\mathcal S}| \times |f(\mathcal S)|}.$$
Observe that $|\mathcal{W}_{\mathcal S}|$ can be set to any arbitrary natural number. Letting $\omega_{\mathcal S}=|\mathcal{W}_{\mathcal S}|$ we  obtain 
$$\sup_{W}\frac{\alpha(W \boxtimes G)}{\alpha(W\boxtimes H)}=\sup_{\{\omega_{\mathcal S}\in \mathbb{Z}\cup\{0\}: \mathcal S\in \mathcal I(G)\}}~\min_{f\in\mathcal{F}}\frac{\sum_{\mathcal S\in \mathcal I(G)}\omega_{\mathcal S}|\mathcal S|}{\sum_{\mathcal S\in \mathcal I(G)} \omega_{\mathcal S}|f(S)|}.$$
In particular, if the supremum on the right-hand side is a maximum, the supremum on the left-hand side is also a maximum. Next, we claim that to compute
\begin{align}\sup_{\{\omega_{\mathcal S}\in \mathbb{Z}\cup\{0\}: \mathcal S\in \mathcal I(H)\}}~\min_{f\in\mathcal{F}}\frac{\sum_{\mathcal S\in \mathcal I(H)}\omega_{\mathcal S}|\mathcal S|}{\sum_{\mathcal S\in \mathcal I(H)} \omega_{\mathcal S}|f(\mathcal S)|},\label{eqnABC}
\end{align}
without loss of generality, one can assume that $\omega_{\mathcal S}=0$ if $|\mathcal S|\neq 1$. We first show that  without loss of generality, to compute \eqref{eqnABC},  we can restrict the minimization over $f\in\mathcal{F}$ satisfying
$$|f(\mathcal S)|\geq \sum_{i\in \mathcal S}|f(\{i\})|,$$
for any set $\mathcal S\in \mathcal I(G)$.
 To show this, take an arbitrary function $f$ and  $\mathcal S\in \mathcal I(G)$ such that
$$|f(\mathcal S)|< \sum_{i\in \mathcal S}|f(\{i\})|.$$
Observe that $f(\{i\})$ and $f(\{j\})$ for $i,j\in \mathcal S$ are disconnected independent sets in $\mathcal I(H)$, since $\{i\}$ and $\{j\}$ are disconnected independent sets in $\mathcal I(G)$. Thus,
$$\sum_{i\in \mathcal S}|f(\{i\})|=\big|\cup_{i\in \mathcal S}f(\{i\})\big|.$$
We claim that replacing $f(\mathcal S)$ by $\cup_{i\in \mathcal S}f(\{i\})$ would not violate conditions on $f$. This change would increase $|f(\mathcal S)|$ and hence would also increase the term $\sum_{\mathcal S\in \mathcal I(G)} \omega_{\mathcal S}\cdot |f(\mathcal S)|$. To show that the replacement would not violate conditions on $f$, take some arbitrary set $\mathcal T\in \mathcal I(G)$ where $\mathcal T$ and $\mathcal S$ are disconnected in $G$. Then, the set $\{i\}$ and $\mathcal T$ are disconnected in $G$, implying that $f(\mathcal T)\in \mathcal I(H)$ is disconnected from $f(\{i\})$. Hence, $f(\mathcal T)$ is disconnected from $\cup_{i\in \mathcal S}f(\{i\})$. Thus, the desired condition is satisfied by this replacement. To sum this up, without loss of generality, we can restrict the minimization over $f$ to those satisfying
$$|f(\mathcal S)|\geq \sum_{i\in \mathcal S}|f(\{i\})|.$$
Now, take some arbitrary set of $\{\omega_{\mathcal S}\geq 0: \mathcal S\in \mathcal I(G)\}$. Then, consider the following:
\begin{align}\omega'(\{i\})&=\sum_{\mathcal S: i\in \mathcal S}\omega_{\mathcal S}.\\
\omega'(\mathcal S)&=0\qquad \text{ if }|\mathcal S|>1.\end{align}
Then, we have
\begin{align}\sum_{\mathcal S\in \mathcal I(G)}\omega'_{\mathcal S}\cdot |S|&=
\sum_{i}\sum_{\mathcal S:~i\in \mathcal S} \omega_{\mathcal S}
\\&=
\sum_{\mathcal S\in \mathcal I(G)}\omega_{\mathcal S}\cdot |\mathcal S|
\end{align}
and
\begin{align}
\sum_{\mathcal S\in \mathcal I(G)} \omega'_{\mathcal S}\cdot |f(\mathcal S)|&=\sum_{i}\sum_{\mathcal S:~i\in \mathcal S} \omega_{\mathcal S}\cdot |f(\{i\})|
\\&
=\sum_{\mathcal S\in \mathcal I(G)}\omega_{\mathcal S}\sum_{i\in \mathcal \mathcal S}  |f(\{i\})|
\\&
\leq \sum_{\mathcal S\in \mathcal I(G)} \omega_S\cdot |f(\mathcal S)|.
\end{align}
Since this holds for every arbitrary function $f$,  this implies that without loss of generality, we can assume that $\omega_{\mathcal S}$ is non-zero only when $|\mathcal S|=1$. Assuming that $  \mathcal V (G)=\{v_1,v_2,\cdots, v_k\}$, let $w_i=\omega_{\{i\}}$, and $\mathcal T_i=f(\{i\})$. Then, 
$$\sup_{W}\frac{\alpha(W \boxtimes G)}{\alpha(W\boxtimes H)}=\sup_{\{w_i\in \mathbb{Z}\cup\{0\}\}}~\min_{\mathcal T_1, \cdots, \mathcal T_k}\frac{\sum_{i=1}^kw_i}{\sum_{i=1}^kw_i |\mathcal T_i|}$$
where the minimum is over any collection of sets $\mathcal T_1, \cdots, \mathcal T_k\in \mathcal I(H)$ such that $\mathcal T_i$ and $\mathcal T_j$ are disconnected in $H$ if there is no edge between $v_i$ and $v_j$ in $G$. 

Observe that scaling $w_i$ would not change the above expression. Therefore, 
$$\sup_{W}\frac{\alpha(W \boxtimes G)}{\alpha(W\boxtimes H)}=\sup_{\{w_i\in \mathbb{Q}, w_i\geq 0\}}~\sum_{i=1}^kw_i$$
subject to $\sum_{i=1}^kw_i |\mathcal T_i|\leq 1$
for any collection of sets $\mathcal T_1, \cdots,\mathcal T_k\in \mathcal I(H)$ where $\mathcal T_i$ and $\mathcal T_j$ are disconnected if $v_iv_j\notin \mathcal E(G)$. Relaxing $w_i\in \mathbb{Q}$ to $w_i\in \mathbb{R}$ would not change the value of this linear program because the solution of this linear program is rational. The first part of the theorem thus follows. To show the last part, observe that the domain of the linear program is compact since $w_i\leq 1$  for all $i$. To see this, take $\mathcal T_i$ as an independent set in $H$ and $\mathcal T_j=\emptyset$ for $j\neq i$. The compactness of the domain of the linear program implies that the solution is obtained at some finite $(w_1, \cdots, w_k)$. Thus, the supremum in the definition of $\alpha^*(G|H)$ is a maximum.

\subsection{Proof of Theorem \ref{thm2}} 
 \label{proofthm2}

For the first part, let $C$ be the maximizer for $\alpha^*(G|W)$. Then, we have
    \begin{equation*}
        \frac{\alpha^*(G|W)}{\alpha^*(H|W)} = \frac{\sup_{A} \frac{\alpha(G \boxtimes A)}{\alpha(W \boxtimes A)}}{\sup_{B} \frac{\alpha(H \boxtimes B)}{\alpha(W \boxtimes B)}}
        = \frac{\frac{\alpha(G \boxtimes C)}{\alpha(W \boxtimes C)}}{\sup_{B} \frac{\alpha(H \boxtimes B)}{\alpha(W \boxtimes B)}}
        \leq \frac{\frac{\alpha(G \boxtimes C)}{\alpha(W \boxtimes C)}}{\frac{\alpha(H \boxtimes C)}{\alpha(W \boxtimes C)}} = \frac{\alpha(G \boxtimes C)}{\alpha(H \boxtimes C)} \leq \alpha^*(G|H).
    \end{equation*}
    Similarly, let $D$ be the maximizer for $\alpha^*(H|W)$. We have
    \begin{equation*}
        \frac{\alpha^*(G|W)}{\alpha^*(H|W)} = \frac{\sup_{A} \frac{\alpha(G \boxtimes A)}{\alpha(W \boxtimes A)}}{\sup_{B} \frac{\alpha(H \boxtimes B)}{\alpha(W \boxtimes B)}}= \frac{\sup_{A} \frac{\alpha(G \boxtimes A)}{\alpha(W \boxtimes A)}}{ \frac{\alpha(H \boxtimes D)}{\alpha(W \boxtimes D)}} \geq \frac{\frac{\alpha(G \boxtimes D)}{\alpha(W \boxtimes D)}}{\frac{\alpha(H \boxtimes D)}{\alpha(W \boxtimes D)}} = \frac{\alpha(G \boxtimes D)}{\alpha(H \boxtimes D)} \geq \frac{1}{\alpha^*(H|G)}. 
    \end{equation*}

For the second part, consider the max form of the linear programs (first part of Theorem \ref{thm1}) for computing $\alpha^*(G_1+G_2|H)$, $ \alpha^*(G_1|H)$ and $ \alpha^*(G_2|H)$  and let us denote the set of constraints in these linear programs by $Cons(G_1+G_2|H),Cons(G_1|H)$ and $Cons(G_2|H)$, respectively. Thus, for instance, $\alpha^*(G_1+G_2|H)=\max \sum_{v_i\in \mathcal V(G_1)} w_i+\sum_{v_i\in \mathcal V(G_2)} w_i$ subject to $Cons(G_1+G_2)$. Also $ \alpha^*(G_1|H)=\max \sum_{v_i\in \mathcal V(G_1)} w_i$ subject to  $Cons(G_1|H)$ and $\sum_{v_i\in \mathcal V(G_2)} w_i$ subject to $Cons(G_2|H)$. 
It is clear that, $Cons(G_1|H)\subset Cons(G_1+G_2|H)$ and $Cons(G_2|H)\subset Cons(G_1+G_2|H)$, so
$\alpha^*(G_1+G_2|H)\leq \alpha^*(G_1|H)+\alpha^*(G_2|H).$

For the third part, it is clear that $\alpha^*((G_1+G_2)^c|H)\geq\max(\alpha^*(G_1^c|H),\alpha^*(G_2^c|H))$.
Consider the characterization in the third part of Theorem \ref{thm1} to show the reverse direction. Take the optimal random mappings for $\alpha^*(G_1^c|H)$ and $\alpha^*(G_2^c|H)$ and combine them to construct a random mapping for $(G_1+G_2)^c$. This random mapping implies that
$$\alpha^*((G_1+G_2)^c|H)\leq \max(\alpha^*(G_1^c|H),\alpha^*(G_2^c|H)).$$

For the fourth part, we have
\begin{align*}
        \frac{1}{\alpha^*(G|H_1+H_2)} &= \inf_{W} \frac{\alpha((H_1 + H_2) \boxtimes W)}{\alpha(G \boxtimes W)} = \inf_{W} \frac{\alpha(H_1 \boxtimes W) + \alpha(H_2 \boxtimes W)}{\alpha(G \boxtimes W)} 
        \\& \geq \inf_{W} \frac{\alpha(H_1 \boxtimes W)}{\alpha(G \boxtimes W)} + \inf_{W} \frac{\alpha(H_2 \boxtimes W)}{\alpha(G \boxtimes W)} = \frac{1}{\alpha^*(G|H_1)} +  \frac{1}{\alpha^*(G|H_2)}.
\end{align*}
According to Theorem \ref{alphavertran}, when $G$ is a vertex-transitive graph on $k$ vertices, we have the following:
\begin{align*}
        \inf_{W} \frac{\alpha((H_1+H_2) \boxtimes W)}{\alpha(G \boxtimes W)} &= \frac{\alpha((H_1+H_2) \boxtimes G^c)}{k} = \frac{\alpha(H_1 \boxtimes G^c)}{k} + \frac{\alpha(H_2 \boxtimes G^c)}{k} 
        \\&= \inf_{W} \frac{\alpha(H_1 \boxtimes W)}{\alpha(G \boxtimes W)} + \inf_{W} \frac{\alpha(H_2 \boxtimes W)}{\alpha(G \boxtimes W)}= \frac{1}{\alpha^*(G|H_1)} +  \frac{1}{\alpha^*(G|H_2)}. 
\end{align*}
Thus, equality holds in this case.

\subsection{Proof of Lemma \ref{w}}
 \label{prooflemm1}
Take some $W$ such that $\alpha^*(G_1\boxtimes G_2\boxtimes \cdots \boxtimes G_r)=\frac{\alpha(G_1\boxtimes G_2\boxtimes \cdots \boxtimes G_r\boxtimes W)}{\alpha(W)}$.

We show that this choice of $W$ works for us. Note that
\begin{align}\nonumber
\prod_{i=1}^r\alpha^*(G_i)&=
    \alpha^*(G_1\boxtimes G_2\boxtimes \cdots \boxtimes G_r)\nonumber=\frac{\alpha(G_1\boxtimes G_2\boxtimes \cdots \boxtimes G_r\boxtimes W)}{\alpha(W)}
    \\&=\nonumber
    \frac{\alpha( G_2\boxtimes \cdots \boxtimes G_r\boxtimes G_1\boxtimes W)}{\alpha(G_1\boxtimes W)}\cdot
    \frac{\alpha(G_1\boxtimes W)}{\alpha(W)}
    \\&\leq \label{eqnonestepineq}
        \max_{W'}\frac{\alpha(G_2\boxtimes \cdots \boxtimes G_r\boxtimes  W')}{\alpha(W')}\cdot
    \max_{W'}\frac{\alpha(G_1\boxtimes W')}{\alpha(W')}
    \\&=\alpha^*(G_2\boxtimes \cdots \boxtimes G_r)\alpha^*(G_1)\nonumber=\prod_{i=1}^r\alpha^*(G_i).\nonumber
\end{align}
Therefore, equality must hold in \eqref{eqnonestepineq}. Hence 
$\alpha^*(G_1)=\frac{\alpha(G_1\boxtimes W)}{\alpha(W)}.$
A similar argument establishes the desired equality for $G_i$, $i>1$.

\subsection{Proof of Theorem \ref{thm3}}
 \label{proofthm3}

It suffices to show that
\begin{align}
    \alpha^*(G|H) \geq \frac{X(G)}{X(H)}.\label{eqnjjj}
\end{align}
The other inequality $\frac{1}{\alpha^*(H|G)} \leq \frac{X(G)}{X(H)}$ is equivalent with $\alpha^*(H|G) \geq \frac{X(H)}{X(G)}$, which is the same inequality as in \eqref{eqnjjj} with $G$ and $H$ swapped.

Observe that by choosing $W$ as a trivial graph with just one node and no edges, we have
$$\alpha^*(G|H)=\sup_{W}\frac{\alpha(G\boxtimes W)}{\alpha(H\boxtimes W)}\geq \frac{\alpha(G)}{\alpha(H)}.$$

Next, assume that we have two quantities $X(\cdot)$ and $Y(\cdot)$ on a graph such that \begin{align}\alpha(G\boxtimes W)&\leq X(G)Y(W)\label{eqnA12}\end{align}
for any two graphs $G$ and $W$. Then, we have
\begin{align*}\alpha^*(G|H)&=\sup_{W}\frac{\alpha(G\boxtimes W)}{\alpha(H\boxtimes W)}
\geq 
\sup_{W}\frac{\alpha(G\boxtimes W)}{X(H)Y(W)}
= 
\frac{1}{X(H)}\sup_{W}\frac{\alpha(G\boxtimes W)}{Y(W)}.
\end{align*}
Consider the choice of $X(G)=Y(G)=\vartheta(G)$, the Lov\'{a}sz number of $G$. Then  \eqref{eqnA12} is satisfied. Moreover, as shown in \cite[Theorem 2]{Winter},
$$\sup_{W}\frac{\alpha(G\boxtimes W)}{\vartheta(W)}=\vartheta(G).$$
Thus, the desired inequality holds for the Lov\'{a}sz number of $G$.

Similarly, the choices of $X(G)=\vartheta^{-}(G), Y(G)=\vartheta^{+}(G)$ and $X(G)=\vartheta^{+}(G), Y(G)=\vartheta^{-}(G)$. Then, by \cite[Lemma 7]{Winter}, \eqref{eqnA12} is satisfied. From \cite[Theorem 8]{Winter}, we have
$$\sup_{W}\frac{\alpha(G\boxtimes W)}{\vartheta^{-}(W)}=\vartheta^{+}(G),$$
$$\sup_{W}\frac{\alpha(G\boxtimes W)}{\vartheta^{+}(W)}=\vartheta^{-}(G).$$
So, we obtain the desired results for the Schrijver's or Szegedy's variants of the Lov\'{a}sz number.

\subsection{Proof of Theorem \ref{thmN4new}}
\label{proofthmN4new}

Note that 
$\mathcal{S}$ is an independent set of $(G')^c\boxtimes H'$. 
    For any vertex $u$ in $G'$, define $$\mathcal A_u:=\{v\in \mathcal V(H'): (u,v)\in \mathcal S\}$$ We similarly define $\mathcal B_v$ for vertices $v$ in $H'$
as
$$\mathcal B_v:=\{u\in \mathcal V(G'): (u,v)\in \mathcal S\}.$$
By assumption about $G'$ and $H'$, the sets $A_u$ and $B_v$ are non-empty for any $u \in G'$ and $v\in H'$. For any vertex $v$ of $H'$, $\mathcal B_v$ is an independent set in $(G')^c$. Therefore, it forms a clique in $G'$. To construct $G'$ from $H'$ by applying the expansion operations, we first replace every vertex $v$ in $H'$ with a clique of size $|\mathcal B_v|$ to obtain $H'_1$.
    We can imagine the vertices of these cliques labeled by the members of $\mathcal B_v$. 
    
    If two vertices in $H'_1$ labeled by $u_i$ and $u_j$ are connected by an edge in $H'_1$, then either for a vertex $v$ in $H'$ we have $u_i, u_j \in \mathcal B_v$, or there exist two connected vertices $v_i, v_j \in H'$ such that $u_i \in \mathcal B_{v_i}$ and $u_j \in \mathcal B_{v_j}$. Since $\mathcal B_v$ for $v\in \mathcal{V}(H')$ is obtained from an independent set of $(G')^{c} \boxtimes H'$, it follows that in both cases, $u_i$ must be disconnected from $u_j$ in $(G')^{c}$. Consequently, there is an edge between them in $G'$. To sum this up, if there is an edge between two vertices labeled by $u_i$ and $u_j$ in $H'_1$, then there is an edge between $u_i$ and $u_j$ in $G'$.
    
    Finally, using Remark \ref{mrk:merge}, we can merge all the vertices in $H'_1$ that have the same label to obtain $H''$. The assumption that $\mathcal A_u$ is non-empty would imply that the vertices of $H''$ cover all vertices of $G'$. This yields a graph $H''$ which is a spanning subgraph of $G'$. Now, by adding the missing edges of $G'$ to $H''$, we can finish the construction of $G'$.

\subsection{Proof of Theorem \ref{alphavertran}}
\label{proofthmalphavertran}

We establish the following statement:
for any graphs $G$ and $H$, where $\mathcal V(G)=\{v_1,\dots, v_k\}$, we have
\begin{align}\alpha^*(G|H)\geq k\min \frac{1}{\sum_{i=1}^k  |\mathcal T_i|}=\frac{|\mathcal V(G)|}{\alpha(G^c\boxtimes H)},\label{aseqrq2}\end{align}
where the minimum is over all collection of sets $\mathcal T_1, \cdots, \mathcal T_k\in \mathcal I(H)$ such that $\mathcal T_i$ and $\mathcal T_j$ are disconnected in $H$ if there is no edge between $v_i$ and $v_j$ in $G$. Moreover, the lower bound in \eqref{aseqrq2} is tight (holds with equality) if $G$ is vertex-transitive.

According to Theorem \ref{thm1}, 
$\alpha^*(G|H)=\max_{\mathbf w} \sum_{i=1}^k w_i$,
where the maximum is over all non-negative weights $w_i$ such that $\sum w_i|\mathcal T_i|\le 1$ for all collection $\mathcal T_1,\dots, \mathcal T_k\in \mathcal I(H)$ such that $\mathcal T_i$ and $\mathcal T_j$ are disconnected if $v_iv_j\not\in \mathcal E(G)$. If we only restrict to those weights that $w_1=\dots=w_k$ we get
$$\alpha^*(G|H)\ge k \max w_1$$
where $w_1\le  1/(\sum_i |\mathcal T_i|)$. Since $\max w_1=\min 1/(\sum_i |\mathcal T_i|)$ the lower bound is established. Next, we show that $\max\sum_{i=1}^k  |\mathcal T_i|=\alpha(G^c\boxtimes H)$. Let $\mathcal T^*_i$ be a valid assignment of independent sets of $H$ to the vertices of $G$ such that $\sum_{i=1}^k  |\mathcal T^*_i| =\max \sum_{i=1}^k  |\mathcal T_i|$. Consider all the pairs of vertices $v_i,u_j$, such that $v_i\in \mathcal V(G)$ and $u_j\in \mathcal T^*_i$. The set of these pairs are independent in $G^c\boxtimes H$, so, $\max\sum_{i=1}^k  |\mathcal T_i|\leq \alpha(G^c\boxtimes H)$. On the other hand, if $(v^*_i,u^*_j)$'s are the set of vertices in a maximum independent set, $\mathcal I^*$, of $G^c\boxtimes H$, then if we define $\mathcal T_i\triangleq\{u^*_j|(v^*_i,u^*_j)\in \mathcal I^*\}$, we get a valid assignment. This shows  $\max\sum_{i=1}^k  |\mathcal T_i|\ge \alpha(G^c\boxtimes H)$.
Thus, $\max\sum_{i=1}^k  |\mathcal T_i|=\alpha(G^c\boxtimes H)$.

Next, assume that $G$ is vertex-transitive. For any pair
 of vertices $v_i$ and $v_j$, let $\Pi_{i,j}$ be the set of automorphisms  $\phi:G\rightarrow G$ that maps vertex $v_i$ to vertex $v_j$. Since $G$ is vertex-transitive, $|\Pi_{i,j}|\geq 1$. We claim that $|\Pi_{i,j}|=|\Pi_{1,1}|$ does not depend on $i,j$. To see this, take some $\phi^*_1\in \Pi_{1,i}$ and $\phi^*_2\in\Pi_{j,1}$. Then, $\phi\in \Pi_{i,j}$ if and only if $\phi^*_2\circ \phi\circ \phi^*_1\in \Pi_{1,1}$. This provides a one-to-one map between $\Pi_{i,j}$ and $\Pi_{1,1}$ and the claim follows.

Let $\pi$ be a permutation on $\{1,\dots, k\}$ such that
$\phi(v_i)=v_{\pi(i)}$ is
an automorphism of $G$. If $\mathcal T_1,\dots, \mathcal T_k$ is a collection of independent subsets of $\mathcal I(H)$ such that $\mathcal T_i$ and $\mathcal T_j$ are disconnected if $v_iv_j\not\in \mathcal E(G)$, then the constraint $\sum w_{\pi(i)}|\mathcal T_i|\le 1$ must hold as well. Now if we take the average of all the inequalities $\sum w_{\pi(i)}|\mathcal T_i|\le 1$ over all automorphisms (over all $\pi$), we get
$\bar{w}(\sum_i |\mathcal T_i|)\le 1$,
where $\bar{w}=(w_1+\dots+w_k)/k$.  So 
$$\alpha^*(G|H)=\max_w (w_1+\dots+w_k)=k\max \bar{w}\le k \frac{1}{\sum_i |\mathcal T_i|}$$
for any valid collection $\mathcal T_1,\dots,\mathcal T_k\in\mathcal I(H)$, so
$\alpha^*(G|H)\le k \min \frac{1}{\sum_i |\mathcal T_i|}$
and the lemma holds.

\subsection{Proof of Theorem \ref{Thm:bellowcayley}}
\label{proofthmCayley}
Assume that $\alpha^{*}(G|H) \leq 1$. We aim to prove that $G \in Expand(H)$. 

\subsubsection{Proof of the first part}

Since $G$ is a Cayley graph, it is vertex-transitive. Therefore, by Theorem \ref{alphavertran}, we have:
\[
\alpha^{*}(G|H) = \frac{|\mathcal{V}(G)|}{\alpha(G^{c} \boxtimes H)}
\]
Thus, $\alpha^{*}(G|H) \leq 1$ implies that $|\mathcal{V}(G)| \leq \alpha(G^{c} \boxtimes H)$. Let $\mathcal S$ be a maximum independent set $G^{c} \boxtimes H$. For any vertex $u$ in $G$, define $$\mathcal A_u:=\{v\in \mathcal V(H): (u,v)\in \mathcal S\}$$ We similarly define $\mathcal B_v$ for vertices $v$ in $H$
as
$$\mathcal B_v:=\{u\in \mathcal V(G): (u,v)\in \mathcal S\}.$$
We have
\[
\alpha(G^{c} \boxtimes H) = \sum\limits_{u \in G} |\mathcal A_u| \geq |\mathcal{V}(G)|.
\]
If $|A_u| \geq 1$ for all $u \in G$, we can apply Theorem \ref{thmN4new} to deduce that we can construct $G$ from $H$ by applying the expansion operations. Thus, assume that $|A_u| =0$ for some $u \in G$. Label vertices of $G=Cay(\mathbb{Z}_n, \pm 1,\pm 2,\dots \pm k)$ by $u_1, u_2, \cdots, u_n$ where either $n=|\mathcal{V}(G)|=c(k+1)+1$ or $n=|\mathcal{V}(G)|=c(k+1)$ for some integer $c$.    
    Without loss of generality assume $|A_{u_n}| =0$. Now, consider the following $k+1$ sets in $G$:
    \begin{equation*}
        \mathcal S_i = \lbrace u_{t} \, | \, t \equiv i \pmod {k+1}), \, \, t \neq n \rbrace, \qquad 1\leq i\leq k+1.
    \end{equation*}
    We have $\bigcup_{i=1}^{k+1}\mathcal{S}_i=\mathcal{V}(G)-u_n$. Each set $\mathcal S_i$ is an independent set in $G$.

    Since $\sum\limits_{u \in G} |\mathcal A_u| \geq n$, we must have $\sum\limits_{u \in \mathcal S_{i^*}} |\mathcal A_u| \geq \ \lceil \frac{n}{k+1} \rceil$ for some integer $1 \leq i^* \leq k+1$. We construct $G$ from $H$ by applying the expansion operations as follows: we first delete every vertex $v$ from $H$ such that  $\mathcal B_v \cap S_{i^*} = \emptyset$  to obtain $H'$. Since $S_{i^*}$ was an independent set in $G$, it forms a clique in $G^c$. Since the projections $\mathcal B_v$ are coming from an independent set of $G^c \boxtimes H$, all the remaining vertices in $H'$ must be disconnected from each other, and $H'$ should not have any edges.
    
     Next, due to our assumption about the set $S_{i^*}$, $H'$ has at least $\lceil \frac{n}{k+1} \rceil$ vertices. To obtain $H''$, we replace $\lfloor \frac{n}{k+1} \rfloor$ of these vertices with cliques of size $k+1$. Thus, if $n=c(k+1)$, the graph $H''$ will consist of $c=\lfloor \frac{n}{k+1} \rfloor$ disjoint cliques of size $k+1$. If $n=c(k+1) + 1$, the graph $H''$ will consist of $c$ disjoint cliques of size $k+1$, plus one single vertex. Hence, $H''$ is a spanning subgraph of $G$ in either case. By adding the missing edges of $H''$ to it, we complete the construction of $G$. Thus, the theorem is proved for this case.

\subsubsection{Proof of the second part}
    Next, we show that the conjecture holds for perfect graphs. Assume that $G$ is a perfect graph with $n$ vertices and $\alpha^{*}(G|H) \leq 1$. We aim to prove that $G \in Expand(H)$.
    Since from Theorem \ref{alphavertran} we have
    $$\alpha^{*}(G|H)\geq \frac{|\mathcal{V}(G)|}{\alpha (H \boxtimes G^c)}$$
    we obtain $|\mathcal{V}(G)| \leq \alpha (H \boxtimes G^c)$.  
        By the Perfect Graph Theorem, we know that $G^c$ is also perfect. Therefore, we have $|\mathcal{V}(G)|  \leq \alpha (H \boxtimes G^c)=\alpha(H)\alpha(G^c)$. Let $\mathcal{S}$ be a maximum independent set of $H \boxtimes G^c$ such that $\mathcal{S}=\mathcal{S}_1\times \mathcal{S}_2$ where $\mathcal{S}_1$ and $\mathcal{S}_2$ are maximum independent sets of $H$ and $G^c$ respectively. 

        First, we delete every vertex $u \notin \mathcal{S}_1$ from $H$. Secondly, replace the remaining vertices of $H$ with a clique of size $\alpha(G^c)$ to obtain $H'$. Due to the selection of a maximum independent set, $H'$ consists of $\alpha(H)$ disjoint cliques, each of size $\alpha(G^c)$.         Since $\alpha^{*}(G|H) \leq 1$, we should have ${\alpha(G)}/{\alpha(H)} \leq 1$, therefore $\alpha(G) \leq \alpha(H)$ and $H'$ has at least that $\alpha(G)$ disjoint cliques.
        
        In Lemma \ref{lemvperf} (provided below), we show that the vertices of a perfect graph $G$ can be partitioned into $\alpha(G)$ disjoint sets such that the induced subgraph on each set is a clique. Since cliques of $G$ are independent sets of $G^c$, the cliques have a maximum size of $\alpha(G^c)$. Noting that $H'$ consists of at least $\alpha(G)$ disjoint cliques, we can remove some vertices from $H'$ to obtain $H''$ such that $H''$ precisely represents the clique partition of $G$. This implies that $H''$ is a spanning subgraph of $G$, and by adding the remaining edges to $H''$, we can complete the construction of $G$ from $H$.

       \begin{lemma}\label{lemvperf}
            The vertices of a perfect graph $G$ can be partitioned into $\alpha(G)$ disjoint sets such that the induced subgraph on each set is a clique. 
        \end{lemma}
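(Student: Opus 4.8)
The plan is to recast the desired clique partition as a proper vertex coloring of the complement $G^c$. A set $\mathcal{C}\subseteq\mathcal V(G)$ induces a clique in $G$ if and only if $\mathcal{C}$ is an independent set in $G^c$. Hence a partition of $\mathcal V(G)$ into cliques of $G$ is exactly a proper coloring of $G^c$ (each color class being a clique of $G$), and the minimum number of parts needed equals the chromatic number $\chi(G^c)$. So the whole statement reduces to showing $\chi(G^c)=\alpha(G)$, after which any optimal proper coloring of $G^c$ supplies the required partition.

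First I would record the easy lower bound. Fix a maximum independent set $\mathcal I$ of $G$ with $|\mathcal I|=\alpha(G)$. Any clique of $G$ meets $\mathcal I$ in at most one vertex, so any partition of $\mathcal V(G)$ into cliques must use at least $|\mathcal I|=\alpha(G)$ parts; equivalently $\chi(G^c)\ge\omega(G^c)=\alpha(G)$, where I used that a clique in $G^c$ is precisely an independent set in $G$, so $\omega(G^c)=\alpha(G)$.

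The key step is the matching upper bound, and this is where perfection enters. The complement of a perfect graph is again perfect: the forbidden-subgraph characterization of perfect graphs is invariant under complementation (an induced odd hole in $G$ corresponds to an induced odd antihole in $G^c$ and vice versa), equivalently by the Weak Perfect Graph Theorem. Applying the defining property of perfect graphs to $G^c$ yields $\chi(G^c)=\omega(G^c)$, and combining with $\omega(G^c)=\alpha(G)$ gives $\chi(G^c)=\alpha(G)$. Finally, an optimal proper coloring of $G^c$ uses exactly $\chi(G^c)=\alpha(G)$ color classes; each class is independent in $G^c$, hence a clique in $G$, and these classes partition $\mathcal V(G)$ into $\alpha(G)$ cliques, as desired. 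The only nontrivial ingredient is that $G^c$ is perfect (so that $\chi(G^c)=\omega(G^c)$); the rest is the elementary duality between clique partitions of $G$ and colorings of $G^c$, so I do not expect a genuine obstacle beyond correctly invoking perfection of the complement.
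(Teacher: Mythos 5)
Your proof is correct and follows essentially the same route as the paper: pass to $G^c$, invoke the (weak) Perfect Graph Theorem to get that $G^c$ is perfect, conclude $\chi(G^c)=\omega(G^c)=\alpha(G)$, and read the color classes as cliques of $G$. The only difference is cosmetic — you additionally record the easy lower bound and mention the forbidden-subgraph characterization, neither of which is needed.
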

        \begin{proof}
            Since $G$ is perfect, by the Perfect Graph Theorem, $G^c$ is also perfect. This means the maximum clique size of $G^c$, which is the same as $\alpha(G)$, is equal to its chromatic number. A coloring of $G^c$ is equivalent to partitioning the vertices of $G^c$ into independent sets, and equivalently vertices of $G$ into cliques. Because $\chi(G^c) = \alpha(G)$, this partition consists of $\alpha(G)$ cliques. Hence, the lemma is proven.
        \end{proof}

\subsection{
Proof of Theorem \ref{cayex}
}\label{proofcayex}

Note that for a vertex-transitive graph $G$ of size $n$, $\alpha^*(G)=\frac{n}{\omega(G)}$, where $\omega(G)$ is the clique number of $G$. Thus, $\alpha^*(G)=\frac{n}{k+1}$ and $\alpha^*(H)=\frac{m}{k+1}$. It follows that $\alpha^*(G|H)\ge \frac{n}{m}$ and hence $\alpha(G^c\boxtimes H)\le m$. 
If there is a homomorphism from $H$ to $G$ then by Lemma 5, we have $\alpha(G^c\boxtimes H)\ge m$ and hence we find that $\alpha^*(G|H)=\frac{n}{m}$. We show that such a homomorphism exists if there are integers $\ell,s\ge 0$ such that
$$m=\ell n+s(k+1).$$
Let $\{1,\dots,n\}$ and $\{1,\dots, m\}$ be the vertex sets for $G$ and $H$. If $\ell=0$ define $\mathsf{g}(i(k+1)+j)=j$ where $1\leq j\le k+1$ and $i\ge 0$. If $\ell>0$, define $\mathsf{g}(in+j)=j$ for $1\le j\le n$ and $i=0,\dots, \ell-1$ and define $\mathsf{g}(\ell n+i(k+1)+j)=j$ for $1\le j\le k+1$ and $i\ge 0$. It can be verified that this is a homomorphism from $H$ to $G$.

\section{Acknowledgment}
The authors want to thank Dr.\ Omid Etesami for constructive discussions.



\bibliographystyle{ieeetr.bst}
\bibliography{mybibl}

\appendix

\section*{Appendix A}

We compute the value of $\alpha^*(C_n|C_m)$. We would like to prove that when $n,m>1$, then 
\[\alpha^*(C_n|C_m)=\begin{cases}
\frac{n}{m-1}& \text{if $n$ is even and $m$ is odd,} \\
\frac{n}{m}, &\text{if $m$ is even, }  \\
\frac{n}{m}, & \text{if $n$ is odd and $m$ is odd and $n\leq m$,}
\\
\frac{n}{m-1}, & \text{if $n$ is odd and $m$ is odd and $n> m$.}

\end{cases}
\]

By Lemma 2, $$\alpha^*(C_n|C_m)=\frac{n}{\alpha(C_n^c\boxtimes C_m)}\ge\frac{\alpha^*(C_n)}{\alpha^*(C_m)}= \frac{n}{m}$$
so $\alpha(C_n^c\boxtimes C_m)\le m$. 
Let $\{u_1,\dots, u_n\}$ and $\{v_1,\dots, v_m\}$ be the vertex sets of $C_n$ and $C_m$ respectively such that cyclically $u_i$ and $u_{i+1}$ and similarly $v_j$ and $v_{j+1}$ are connected. If $m\ge n$ and both are odd then we have a homomorphism $C_m\rightarrow C_n$ and hence by Lemma 5, $\alpha(C_n^c\boxtimes C_m)\ge m$ and so $\alpha^*(C_n|C_m)=\frac{n}{m}$. If $m$ is even then since $C_m$ is a perfect graph
$$\alpha^*(C_n|C_m)=\frac{\alpha^*(C_n)}{\alpha(C_m)}=\frac{n}{m}$$
Similarly if $n$ is even and $m$ is odd then
$$\alpha^*(C_n|C_m)=\frac{\alpha(C_n)}{\alpha(C_m)}=\frac{n}{m-1}$$
The only case remains when $n>m$, and they are both odd. In this case $(u_i,v_i)$ for $i=1,\dots, m-1$ is an independent set. So, we only need to show that we can not have $m$ independent vertices. Let $\mathcal S$ be a set of $m$ independent vertices of $C^c_n\boxtimes C_m$. There are two possibilities. Either, for each vertex $v_k$ of $C_m$, there is a vertex $(u_{i_k},v_k)$ in $\mathcal S$, or there is a vertex, say without loss of generality, $v_{m-1}$ in $C_m$ that there is more than one vertex in $\mathcal S$ with second component equal to $v_{m-1}$. The first case can not happen since, in that case, $u_{i_k}$ is cyclically connected to $u_{i_{k+1}}$ but $n>m$, so it can not happen. In the second case, for any two connected vertices $\{a,b\}$ in $C_m$, there are at most $2$ vertices in $\mathcal S$ with the second component equal to $a$ or $b$. Since for any three vertices in $C_n$, at least two are not connected. Therefore, if there are three vertices in $\mathcal S$ with second component $a$ or $b$, one can find two vertices among them that are connected, a contradiction. Hence, if there is more than one vertex in $\mathcal S$ with a second component $v_{m-1}$, there must be exactly two such vertices, and there are no vertex in $\mathcal S$ with a second component equal to $v_m$ or $v_{m-2}$. Now for each pair $\{v_{2i-1},v_{2i}\}$ for $i=1,\dots, \frac{m-3}{2}$ one has at most $2$ vertex in $S$ with second component equal to $v_{2i-1}$ or $v_{2i}$. So we have at most $2(\frac{m-3}{2})+2=m-1$ vertices in $\mathcal S$. This shows that $\alpha(C_n^c\boxtimes C_m)=m-1$ if $n>m$ and $n$ and $m$ are odd. Now, the calculation of $\alpha^*(C_n|C_m)$ is complete.

\section*{Appendix B}
In this appendix, we present an additional proof for Theorem \ref{thm1} by building upon the proof of Hales in \cite{hales1973numerical} for the fractional independence number and guided by the new form of the linear program. Consider the linear program in part (i) of Theorem \ref{thm1} where we maximize $\sum_{i=1}^k w_i$, subject to the constraints $\sum_{i=1}^kw_i |\mathcal T_i|\leq 1$ for any collection of sets $\mathcal T_1, \cdots, \mathcal T_k\in \mathcal I(H)$ such that $ \mathcal T_i$ and $\mathcal T_j$ are disconnected in $H$ if there is no edge between $v_i$ and $v_j$ in $G$. Let $\eta(G|H)$ be the maximum  of $\sum_{i=1}^k w_i$ in this linear program. We wish to show that
$\eta(G|H)=\alpha^*(G|H)$ where 
\[\alpha^*(G|H)=\sup_{W}\frac{\alpha(G\boxtimes W)}{\alpha(H\boxtimes W)}.\]
We first show that $\eta(G|H)\leq \alpha^*(G|H)$, and then show that $\eta(G|H)\geq \alpha^*(G|H)$.

\emph{Proof for $\eta(G|H)\geq \alpha^*(G|H)$:}
Take some arbitrary graph $W$. We need to show that
\begin{align}\eta(G|H)\geq \frac{\alpha(G\boxtimes W)}{\alpha(H\boxtimes W)}.\label{eqndk4n}\end{align}

    Let $\mathcal S$ be a maximum independent set for $G\boxtimes W$. For $v_i\in \mathcal V(G)$ define
    $$\mathcal S_i:=\{u\in \mathcal V(W): (v_i,u)\in \mathcal S\}.$$
    Note that if $v_i$ and $v_j$ are adjacent, then $\mathcal S_i$ and $\mathcal S_j$ are disconnected.  
    
    Consider the assignment $w_i=\frac{|\mathcal S_i|}{\alpha(H\boxtimes W)}$. Note that 
    \[\sum_i w_i=\frac{\alpha(G\boxtimes W)}{\alpha(H\boxtimes W)}.\]
    Thus, to show \eqref{eqndk4n}, it suffices to show that $w_i$'s satisfy the constraint of the linear program; that is for any subsets $\mathcal T_1,\dots, \mathcal T_n$ of independent sets of $H$ such that $\mathcal T_i$ and $\mathcal T_j$ are disconnected when there is no edge between $v_i$ and $v_j$ then 
    $$\sum_{i=1}^k w_i|\mathcal T_i|\le 1.$$
    Equivalently, we need to show that
    \begin{align}\sum_{i=1}^k |\mathcal S_i| |\mathcal T_i|\le \alpha(H\boxtimes W).\label{eqnkuh}\end{align}
    First, note that the subsets $\mathcal T_i\times \mathcal S_j$ of $\mathcal V(H\boxtimes W)$ are pairwise disjoint.   
    Indeed if $(x,y)$ belongs to $\mathcal T_{i}\times \mathcal S_{i}$ and $\mathcal T_{j}\times \mathcal S_{j}$ for $i\ne j$ then $\mathcal S_i$ and $\mathcal S_j$ are not disconnected and hence $v_i$ and $v_j$ are not adjacent and hence $\mathcal T_i$ and $\mathcal T_j$ are disconnected (giving a contradicition). 
    Next, observe that the union $\cup_{i=1}^n \mathcal T_i\times \mathcal S_i$ is an independent set of $H\boxtimes W$. To show this, take two vertices $(x_1,y_1)$ and $(x_2,y_2)$ in this set. If both of them belong to the same $\mathcal T_i\times \mathcal S_j$ then since $(v_i,y_1)$ and $(v_i,y_2)$ belong to the independent set $\mathcal S$ in $G\boxtimes W$ hence $y_1$ and $y_2$ are not adjacent in $W$. Therefore, the two vertices are not adjacent in $H\boxtimes W$. 
    
    Next, assume $(x_1,y_1)\in \mathcal T_{i}\times \mathcal S_{j}$ and $(x_2,y_2)\in \mathcal T_{j}\times \mathcal S_{j}$ and $i\ne j$. We are done if $y_1$ and $y_2$ are not adjacent. If $y_1=y_2$ then $v_i$ and $v_j$ are not adjacent and hence $\mathcal T_{i}$ and $\mathcal T_{j}$ are disconnected. Therefore, $x_1$ and $x_2$ are not adjacent. Finally, if $y_1$ and $y_2$ are adjacent, then $v_1$ and $v_2$ are not adjacent and again $\mathcal T_{i}$ and $\mathcal T_{j}$ are disconnected. So we constructed an independent set of $H\boxtimes W$ of size $\sum_{v\in \mathcal V(G)} |S_v| |T_v|$. So, \eqref{eqnkuh} is proved, and the proof for this part is complete.

\vspace{0.5cm}

\emph{Proof for $\eta(G|H)\leq \alpha^*(G|H)$:}

    Let $w_i$ be any rational numbers satisfying the constraints of the linear program. We will construct a finite graph $W$ such that 
    \begin{equation}
    \label{eqn1}
         \sum_{i=1}^k w_i\le \frac{\alpha(G\boxtimes W)}{\alpha(H\boxtimes W)}.   \end{equation}
   This would establish that  $\sum_{i=1}^k w_i\le \alpha^*(G|H)$. Since the maximum of $\sum_{i=1}^k w_i$ over all the rational solutions is the same as this maximum over all real numbers, we will be done with this part. 
   
   It remains to give the construction of $W$. Let $N$ be the common denominator of $w_1,\dots, w_k$. Let $W$ be a graph with $Nw_1+\dots+Nw_k$ vertices as follows: for each $v_i\in \mathcal V(G)$ take $\mathcal D_i$ be a set of vertices of $W$ consisting of $Nw_i$ vertices. There are no edges between the vertices in each $\mathcal D_i$. If $v_i$ and $v_j$ are not adjacent in $G$, then each vertex of $\mathcal D_i$ are connected by an edge to each vertex of $\mathcal D_j$; if $v_i$ and $v_j$ are adjacent in $G$ then there is no edge between any vertex of $\mathcal D_i$ and any vertex of $\mathcal D_j$. Thus, $W$ is essentially $G^c$ with the vertex $v_i$ being ``copied" $Nw_i$ times. 
   
   Now, it is clear that the set
    $$\{(v_i,y) : v_i\in \mathcal V(G), y\in \mathcal D_i\}$$
    is an independent set in $G\boxtimes W$ of size $Nw_1+\dots+Nw_k$ and hence
    \begin{equation}
    \label{eqn2}
        Nw_1+\dots+Nw_k\le \alpha(G\boxtimes W).    
        \end{equation}
    
    Take a maximal independent set $\mathcal T$ for $H\boxtimes W$ and for $y\in\mathcal V(W)$ let
    $$\mathcal T_y=\{u\in \mathcal V(H): (u,y)\in \mathcal T\}.$$
Note that $\mathcal T_y$ is an independent set in $H$.
    Some observations are in order: 
    Note that if $y_1$ and $y_2$ belong to $\mathcal D_i$ then $\mathcal T_{y_1}=\mathcal T_{y_2}$. Otherwise we can make $\mathcal T$ to a bigger independent set by adding $(u,y_1)$ and $(u,y_2)$ with $u\in \mathcal T_{y_1}\cup \mathcal T_{y_2}$ to $\mathcal{T}$. Since for $y_1$ and $y_2$ belonging to $\mathcal D_i$, we have $\mathcal T_{y_1}=\mathcal T_{y_2}$, we can use the notation $\mathcal T_i$ for $\mathcal T_y$ for any $y\in \mathcal D_i$. Therefore, assuming $\mathcal{V}(G)=\{1,2,\cdots, k\}$, we get the sets $\mathcal T_1, \mathcal T_2, \cdots, \mathcal T_k$. 

If $v_i$ and $v_j$ are not connected in $G$, then vertices in the sets $\mathcal{D}_i$ and $\mathcal{D}_j$ are connected in $W$ and hence $\mathcal T_i$ and $\mathcal T_j$ must be disconnected in $H$ (otherwise $\mathcal{T}$ will not be an independent set for $H\boxtimes W$). Therefore, since the numbers $w_1,\dots, w_k$ satisfy the constraints of our linear program, we get that
$\sum_{i=1}^k w_i|\mathcal T_i|\le 1$.
Now since $|\mathcal{D}_i|=Nw_i$ we have
    \begin{equation}
    \label{eqn3}
\alpha(H\boxtimes W)=|\mathcal T|=\sum_{y\in\mathcal V(W)}|\mathcal T_y|=\sum_{i=1}^k Nw_i|\mathcal T_i|\le N.
    \end{equation}
      
    From inequalities \eqref{eqn2} and \eqref{eqn3}, the inequality \eqref{eqn1} follows.

\begin{remark}
    Note that the construction of the graph $W$ depends on graph $H$ only through the weights $w_i$ of the linear program. 
\end{remark}

\end{document}